\theoremstyle{plain}
\newtheorem{theorem}{Theorem}[section]
   \newtheorem{lemma}[theorem]{Lemma}
    \newtheorem{corollary}[theorem]{Corollary}
   \newtheorem{proposition}[theorem]{Proposition}
   \theoremstyle{definition}
   \newtheorem{definition}[theorem]{Definition}
   \newtheorem{example}[theorem]{Example}
   \newtheorem{remark}[theorem]{Remark}
   \newtheorem{notation}[theorem]{Notation}
\newcommand{\QQ}{\mathbb Q}
\def\bQ{\mathbb Q}
\def\bZ{\mathbb Z}
\newcommand{\TT}{\mathbb T}
\newcommand{\ZZ}{\mathbb Z}
\newcommand{\PP}{\mathbb P}
\newcommand{\bA}{\mathbb A}
\newcommand{\bP}{\mathbb P}
\newcommand{\cT}{\mathcal{T}}
\newcommand{\cA}{\mathcal{T}}
\newcommand{\cV}{\mathcal{V}}
\newcommand{\cN}{\mathcal{N}}
\def\cW{\mathcal{W}}
\def\cF{\mathcal{F}}
\def\cG{\mathcal{G}}
\def\cH{\mathcal{H}}
\def\cL{\mathcal{L}}
\def\ov{\overline}
\def\eps{\varepsilon}
\def\Ord{\operatorname{\bf ord}}
\def\ord{\operatorname{\bf ord}}
\def\val{\operatorname{\bf val}}
\def\ox{\overline x}
\def\id{\operatorname{id}}
\def\ind{\operatorname{index}}
\def\Ker{\operatorname{Ker}}
\def\mbA{{\mathbf A}}
\def\bL{{\mathbb L}}
\def\oX{{\overline X}}
\def\oE{{\overline E}}
\begin{document}

\title[Elimination Theory for Tropical Varieties]
{Elimination Theory for Tropical Varieties}

\author[Bernd Sturmfels and Jenia Tevelev]
{Bernd Sturmfels and Jenia Tevelev}

\begin{abstract}
Tropical algebraic geometry offers new tools
for elimination theory and implicitization.
We determine the
tropicalization of the image of a subvariety of
an algebraic torus under any homomorphism
from that torus to another torus.
\end{abstract}

\maketitle

\section{Introduction}

Elimination theory is the art of computing the image
of an algebraic variety under a morphism. 
Our set-up is as follows.
Let $\TT^n$ be the
$n$-dimensional algebraic torus over an algebraically
closed field $k$.
Given any closed subvariety
$X \subset \TT^n$ and any homomorphism of tori
$\alpha : \TT^n \rightarrow \TT^d$, our
objective is to compute the Zariski closure
of the image of $X$ under $\alpha$. 
Abusing notation, we denote this closure by $\alpha(X)$.

In tropical elimination theory,
the varieties $X\subset\TT^n$ and $\alpha(X)\subset\TT^d$ are replaced by their 
tropicalizations. The {\em tropicalization} of $X$ is the following subset of $\QQ^n$:
\begin{equation}\label{deftropvar}
 \mathcal{T}(X)=\bigl\{v \in \QQ^n\ |\ 1 \not\in {\rm in}_v(I_X) \bigr\}.
\end{equation}
Here $I_X$ is the ideal of $X$ 
in the Laurent polynomial ring $k[\TT^n]$, and 
${\rm in}_v(I_X) \subset k[\TT^n]$ is the ideal of all
initial forms ${\rm in}_v(f)$ for $f\in I_X$. The set
$\cT(X)$ has the structure of a tropical variety
of dimension $\dim_k (X)$. The following definition clarifies what this means for us:
an {\em (abstract) tropical variety} is a pair $(\cT,m)$,
where $\cT\subset\bQ^n$ and $m:\,\cT^0\to\bZ_{>0}$ is a locally constant function 
(called {\em multiplicity}) which satisfies:
\begin{itemize}
\item There exists a pure-dimensional rational polyhedral fan supported on $\cT$.
\item $\cT^0\subset\cT$ is the open subset of {\em regular points}, where $v\in \cT$ is called regular if
there exists a vector subspace $\bL_v\subset\bQ^n$ such that $\cT=\bL_v$ locally near $v$.
\item The function $m$ satisfies the {\em balancing condition} (see 
Definition~\ref{defbalance}) for one (and hence for any) fan supported on the set $\cT$.
\end{itemize}

There is generally no canonical (or coarsest) fan structure on $\cT(X)$; see
Example~\ref{nonconvexex}.
However, $\cT(X)$ carries a poset of {\em tropical fans}
that have desirable algebro-geometric properties \cite{HKT,Te}. Example~\ref{redherring} 
shows that not all fans on $\cT(X)$ are tropical in the
sense of  Definition~\ref{def:tropfan}.
A particular tropical fan on $\mathcal{T}(X)$, arising from the Gr\"obner fan
of a homogenization of $I_X$, was used for the algorithms in \cite{BJSST}.

The multiplicity $m_v$ of a point $v$ in $\cT(X)^0$ can be computed 
as the sum of the multiplicities of all minimal associate primes
of the initial ideal $\text{in}_v (I_X)$. This recipe was
proposed in \cite[\S 2]{DFS} and it satisfies the balancing
condition by \cite[\S 2.5]{DSPhD}.
A self-contained derivation of the multiplicities on
tropical fans will be given in Section~\ref{multsection}.

Returning to our tropical elimination theory, 
let $\cT_1\subset\bQ^n$ and $\cT_2\subset\bQ^d$ be abstract tropical varieties.
We say that a map  $f:\,\cT_1\to\cT_2$ 
is {\em generically finite of degree} $\delta$ if 
\begin{itemize}
\item $f$ is the restriction of a linear map $\mbA:\,\bZ^n\to\bZ^d$;
\item $f$ is surjective and $\dim\cT_1=\dim\cT_2$;
\item for any point $w\in\cT_2^0$ such that 
$f^{-1}(w)\subset\cT_1^0$ and $|f^{-1}(w)|<\infty$, we have
\begin{equation}\label{superduperformula}
m_w \,\,= \,\, \frac{1}{\delta}\sum_{v \in f^{-1}(w)} \!\! m_y\cdot\ind(\bL_w\cap\bZ^d:\mbA(\bL_v\cap\bZ^n)).
\end{equation}
\end{itemize}

We shall prove the following general result about
morphisms of tropical varieties.

\begin{theorem}\label{funnyform}
Let  $\alpha:\,\TT^n\to\TT^d$ be  a homomorphism of tori
and let $\mbA:\, \bZ^n\to\bZ^d$ be the corresponding linear map of lattices of one-parameter subgroups.
Suppose that $\alpha$  induces a generically finite morphism 
 of degree $\delta$ from $X$ onto $\alpha(X)$.
Then $\mbA$ induces a generically finite map
of tropical varieties from $\cT(X)$ onto $\cT(\alpha(X))$.
\end{theorem}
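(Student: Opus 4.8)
The plan is to reduce the assertion to the numerical identity \eqref{superduperformula} and then prove that identity by a transverse-slicing argument over the Puiseux field $K=\ov{k((t))}$ with valuation $\val$. The dimension and surjectivity requirements are immediate: $\alpha$ generically finite on $X$ gives $\dim\cT(X)=\dim_k X=\dim_k\alpha(X)=\dim\cT(\alpha(X))$; and $\mbA(\cT(X))=\cT(\alpha(X))$ because, by the Fundamental Theorem, a rational point $v\in\cT(X)$ is $\val(p)$ for some $p\in X(K)$ with $\mbA(v)=\val(\alpha(p))\in\cT(\alpha(X))$, while conversely a rational point of $\cT(\alpha(X))$ is the valuation of a $K$-point of $\alpha(X)$ which lifts to a $K$-point of $X$ (the image of $X$ being dense and $K$ algebraically closed), and $\mbA(\cT(X))$ is closed, being a finite union of rational cones.

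Fix now $w\in\cT(\alpha(X))^0$ with $f^{-1}(w)=\{v_1,\dots,v_r\}\subset\cT(X)^0$ finite; finiteness of the fibre forces $\mbA$ to restrict to an isomorphism $\bL_{v_i}\xrightarrow{\ \sim\ }\bL_w$, so $\kappa_i:=\ind(\bL_w\cap\bZ^d:\mbA(\bL_{v_i}\cap\bZ^n))$ is finite. Replacing $\TT^d$ by the subtorus through $\alpha(X)$ and then factoring off the isogeny part of $\alpha$, we may assume $\mbA$ is surjective, so $\alpha$ has connected kernel and preimages of subtori are subtori. Choose a generic subtorus $Z\subset\TT^d$ of dimension $d-\dim\alpha(X)$ whose subspace $L_Z$ complements $\bL_w$, with $L_Z\cap\bZ^d$ a direct-summand complement of the saturated lattice $\bL_w\cap\bZ^d$; put $L=\mbA^{-1}(L_Z)$ (complementary to each $\bL_{v_i}$), pick generic $q_0\in\TT^d(K)$ with $\val(q_0)=w$, and set $W=X\cap\alpha^{-1}(q_0\cdot Z)$, a zero-dimensional scheme for generic $q_0$.

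We compute $\operatorname{length}(W\ \text{near}\ f^{-1}(w))$ twice. First, $\alpha$ carries $W$ onto $(q_0Z)\cap\alpha(X)$ with every fibre of length $\delta$, and the transverse-slice description of tropical multiplicity (Section~\ref{multsection}) identifies the length of $(q_0Z)\cap\alpha(X)$ near $w$ with $m_w\cdot\ind(\bZ^d:(\bL_w\cap\bZ^d)+(L_Z\cap\bZ^d))=m_w$; hence the length is $\delta\,m_w$. Second, $\alpha^{-1}(q_0Z)$ tropicalizes to a translate of $L$, so the same description gives $\operatorname{length}(W\ \text{near}\ v_i)=m_{v_i}\cdot\ind(\bZ^n:(\bL_{v_i}\cap\bZ^n)+(L\cap\bZ^n))$, and a short lattice computation — using $\ker\mbA\cap\bZ^n\subseteq L\cap\bZ^n$, that $\mbA(L\cap\bZ^n)=L_Z\cap\bZ^d$, and multiplicativity of indices through the surjection $\mbA$ — shows this index is $\kappa_i$. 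Summing over $i$, $\delta\,m_w=\sum_i m_{v_i}\kappa_i$, which is \eqref{superduperformula}.

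The crux is the transverse-slice description of the multiplicity invoked twice above — that for a regular rational point $v$ of a tropical variety $\cT(V)\subset\bQ^N$ and a generic translate of a subtorus whose cocharacter lattice $\Lambda$ complements $\bL_v\cap\bZ^N$, the scheme-theoretic intersection with $V$ over $K$ is, near $v$, zero-dimensional of length $m_v\cdot\ind(\bZ^N:(\bL_v\cap\bZ^N)+\Lambda)$; this is precisely what the multiplicity theory of Section~\ref{multsection} is set up to provide, and it is the step where the balancing and fan structure genuinely enter. The remaining obstacles are of bookkeeping type: the genericity arguments confining the fibre counts to a neighbourhood of $w$ (one uses that $w\in\cT(\alpha(X))^0$ is generic so that $f$ is finite, not merely generically finite, near $w$), the reduction to the surjective case together with the count of connected components of $\alpha^{-1}(Z)$ inside the lattice computation, and the usual positive-characteristic caveat (read $\delta$ as the field degree $[k(X):k(\alpha(X))]$ and count all schemes with multiplicity, leaving the argument unchanged).
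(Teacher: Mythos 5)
Your proof takes a genuinely different route from the paper's. The paper proves Theorem~\ref{funnyform} via Theorem~\ref{mainmult}: it refines tropical fans $\cF$ for $X$ and $\tilde\cF$ for $Y$ so that $\mbA$ induces a map of smooth toric varieties $\bP(\cF)\to\bP(\tilde\cF)$ (Lemma~\ref{toricraynaud}), uses the Cohen--Macaulayness of $X(\cF)$ along the boundary (Theorem~\ref{BasicTe}(3)) to get a fiber square of regular embeddings, and then pushes cycles forward through $\alpha$ using \cite[Thm.~6.2]{Fu} together with the toric multiplicity formula of \cite[Prop.~2.7]{FS} for the cycle $\alpha^{-1}(\tilde Z)$. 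That argument never leaves $k$ and never intersects with a translated subtorus over $K$. Your argument, by contrast, is a stable-intersection computation over the Puiseux field: slice $\alpha(X)$ and $X$ by a generic coset $q_0\cdot Z$ of a subtorus transversal to $\bL_w$, count lengths near $w$ and near the $v_i$, and match them through the lattice index identity. Both strategies localize the multiplicity at a codimension-$\dim X$ transversal, but yours degenerates to a translated subtorus while the paper degenerates to the toric boundary; the lattice bookkeeping you carry out (which is correct, using that $\mbA$ surjective forces $\ker\mbA\cap\bZ^n\subset L\cap\bZ^n$ and that $L_Z\cap\bZ^d$ is a direct summand complement of $\Lambda_w$) plays the role that $\ind(\Gamma_i,\Pi)$ plays in the paper.

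There is, however, a genuine gap at the step you rightly call the crux. You assert that the transverse-slice characterization of $m_v$ --- that for a generic subtorus coset $q_0\cdot S$ with $\Lambda=\operatorname{cochar}(S)$ complementary to $\bL_v\cap\bZ^N$, the length of the slice near $v$ equals $m_v\cdot\ind\bigl(\bZ^N:(\bL_v\cap\bZ^N)+\Lambda\bigr)$ --- is ``precisely what the multiplicity theory of Section~\ref{multsection} is set up to provide.'' It is not. Section~\ref{multsection} defines $m_\Gamma$ as the length of $X(\cF)\cap Z$ for a \emph{toric orbit} $Z$ in a toric compactification with tropical fan $\cF$ (Definition~\ref{defmult}), proves independence of the fan via the flatness of the multiplication map (Lemma~\ref{propmult}), and identifies this with a Minkowski weight. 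Nowhere does it prove the Puiseux-slice formula you need, and deriving that formula from Definition~\ref{defmult} is real work: one must degenerate $q_0\cdot S$ to the toric boundary, control the flat limit and its length, and keep track of the index that appears when $\Lambda$ is not a complementary direct summand. This is essentially the same degeneration/flatness argument that underlies the paper's own proof, so your argument does not become shorter once the slice lemma is proved --- it becomes a parallel proof. If you supply the slice lemma (or cite an external reference for stable intersection multiplicities and reconcile it carefully with Definition~\ref{defmult}), the remaining steps --- the set-theoretic surjectivity, the dimension count, the reduction to $\mbA$ surjective, the lattice index computation, and the fiber count over a generic $q_0$ using genericity of $w\in\cT(\alpha(X))^0$ --- all go through, modulo the bookkeeping you explicitly flag.
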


\begin{remark}
Theorem \ref{funnyform} constitutes a refinement of the known identity
\begin{equation}\label{Talpha=AT}
\mathcal{T}(\alpha (X)) \quad = \quad\mbA(\mathcal{T}(X)).
\end{equation}
Equation (\ref{Talpha=AT}) appeared in different guises in \cite[Proposition 3.1]{Te} 
and in \cite[Theorem 3.1]{DFS}. What is new here is the
formula (\ref{superduperformula}) for the push-forward of multiplicities.
\end{remark}

This paper rests on the foundations of 
tropical algebraic geometry which were laid by
Hacking, Keel, and Tevelev in \cite{HKT,Te}.
We review these foundations in Section~\ref{Geometric Tropicalization}.
A particularly important ingredient is the
{\em normal crossing construction} in Theorem~\ref{tropfornccmain}
which characterizes tropical varieties in terms of resolutions of singularities.

In Section 3 we discuss multiplicities on tropical varieties,
and we highlight the connections to intersection theory on toric varieties.
To be precise, we equate the balancing condition with the axiom for
Minkowski weights introduced by
Fulton and Sturmfels \cite{FS}.
Theorem \ref{funnyform} will then be 
restated and proved in Theorem \ref{mainmult}.

In Section 4 we examine the case when $X$ is a generic complete intersection.
This case was also studied by Khovanskii and Esterov \cite{KE},
and we extend some of their results.
In our view, the mixed volumes in \cite{KE}
 are best thought of as
multiplicities on  tropical varieties.
Theorem \ref{main4} gives a formula for  the tropical complete intersection $\mathcal{T}(X)$
and its multiplicities in terms of mixed volumes. Our push-forward formula
for complete intersections (Corollary~\ref{MVeeCor})
is particularly interesting when $\alpha(X)$ is a hypersurface, in which
case it computes  McMullen's {\em mixed fiber polytopes} \cite{McM}.

A special case of elimination is  {\em implicitization}, which 
transforms a parametrization of an algebraic variety
into its representation as the zero set of polynomials.
To model implicitization, we take $X$ to be the graph of the
parametrization and $\alpha $ the projection onto the image coordinates.
This leads to the implicitization formula in Theorem \ref{multFormula}. This
formula was announced in our paper with Yu \cite{STY}.
Its proof is now completed in Section 5 below.
Our results generalize the earlier work on $A$-discriminants by 
Dickenstein, Feichtner and Sturmfels \cite{DFS}, who studied the
tropical implicitization problem for Kapranov's Horn uniformization.
Software for {\bf tr}opical {\bf im}plicitization, which offers an implementation of
Theorem \ref{multFormula}, is described in \cite{HY}.

We close the introduction with an explicit
example which illustrates Theorem \ref{funnyform}.

\begin{example} \label{ex:curve}
Let $X$ denote the curve in $\TT^3 $
defined by the two equations
\begin{equation}
\label{twoeqns}
 x^3+y^3+z^3 \,=\, 5 \qquad \hbox{and} \qquad
x^{-2} + y^{-2} + z^{-2} \,=\, 7.
\end{equation}
We compute the image of $X$ under the map
$\alpha : \TT^3 \rightarrow \TT^2, \, (x,y,z) \mapsto (u,v) \,$ given by
$$ u= xyz\,\,\hbox{and}\,\, v  = yz^2 \quad \hbox{or,\,\, in matrix notation,}
 \quad  {\mathbf A}   \,\,\, = \,\,\, \begin{pmatrix}1 & 1 & 1 \\ 0 & 1 & 2 \end{pmatrix}. $$
 We demonstrate how one constructs the Newton polygon of the plane curve
$\alpha(X) \subset \TT^2$ prior to knowing its equation.
Theorem \ref{main4} tells us that the
 tropicalization $\mathcal{T}(X)$
of the space curve $X \subset \TT^3$ is the one-dimensional
fan consisting of the six rays spanned by $\pm (1,1,0)$, $\pm (1,0,1)$, $\pm (0,1,1)$,
where each ray has multiplicity~$6$. 
By \eqref{Talpha=AT}, the tropical curve $\mathcal{T}(\alpha(X))$ consists
 of the rays spanned by the six columns~of
$$ 6 \cdot
 \mathbf{A} \cdot  \begin{pmatrix}
 1 &  1 & 0 &  -1 &  -1 & \phantom{-}0 \\
 1 &  0 & 1 & -1 &  \phantom{-}0 & -1 \\
 0 &  1 & 1 &  \phantom{-}0 & -1 & -1 \\
 \end{pmatrix}
\quad = \quad
\begin{pmatrix}
12 & 12 & 12 & -12 & -12 & -12 \\
 6  & 12 & 18 &   -6 &  -12 & -18
\end{pmatrix}.
$$
We multiply each vector by $6$ as a way of recording the information
that each ray has multiplicity~$6$.
To obtain the Newton polygon of the plane curve $\alpha(X)$,
we rotate the six vectors by $90$ degrees, and concatenate them
to form a hexagon with vertices
\begin{equation} \label{hexagon}
(0, 36),\,\,
(6, 24),\,\,
(18, 12), \,\,
 (36, 0),\,\,
 (30, 12),  \,\,
 (18, 24). 
 \end{equation}
 Each of the $213$ lattice points in this hexagon contributes
 one term to the equation:
 $$
 u^{36} \, + \, v^{36}
\, + \,15625 u^6 v^{24}
\, + \, 15625 u^{18} v^{12}
 - \,40353607 u^{30} v^{12}
 - \,40353607u^{18} v^{24} \,+ \,\cdots.
 $$
 In Section 4 we shall see how the
 hexagon is constructed as a mixed fiber polytope.
\end{example}

\medskip

\noindent {\bf Acknowledgments}.
We are grateful to Igor Dolgachev, Paul Hacking, Sean Keel, David Speyer,
and Josephine Yu for useful discussions. 
The first author was supported by the National
Science Foundation (DMS-0456960).
The second author was supported by a Sloan Research Fellowship.

\section{Geometric Tropicalization}\label{Geometric Tropicalization}
Let $\TT $ be an algebraic torus over $k$ and $X\subset \TT$ an
irreducible closed subvariety. By {\em geometric tropicalization} of $X$
we mean the characterization of $\mathcal{T}(X)$ given in \cite{HKT,Te}
in terms of constructions of algebraic geometry.
We start out by reviewing the connections between 
the geometric tropicalization and descriptions 
of $\cT(X)$ using valuations of the coordinate ring of $X$ and degenerations of
 $X$ inside torus $\TT$.

\begin{notation}
Throughout this paper we fix the following notation related to toric varieties.
We write $M$ for the lattice of characters of $\TT$ and
$N:=M^\vee$ for the dual lattice. 
The tropical variety $\cT(X)$ will live in $N_\QQ$.
For any fan $\cF\subset N_\QQ$, we denote by $\bP(\cF)$ the corresponding toric variety
and by $X(\cF)$ the closure of $X$ in~$\bP(\cF)$.
\end{notation}

To streamline our logic, our point of departure will be the valuative
definition of $\mathcal{T}(X)$, called the \emph{Bieri--Groves set} in \cite{EKL},
and not the ideal-theoretic definition \eqref{deftropvar}.
Let $K/k$ be the field of Puiseux series with parameter $\eps$ and
with the valuation
$${\Ord} : K \rightarrow \QQ\cup\{\infty\},\quad
\alpha \eps^u + (\hbox{\rm higher order terms}) \,\mapsto \, u.$$
Let  $K[X] = k[X] \otimes_k K$. A {\em ring valuation}
$\val:\,K[X] \to\QQ\cup\{\infty\}$ is, by definition, any map that can be written as
a composition $v\circ f$, where $f:\,K[X] \to L$ is a homomorphism to a field
and $v:\,L\to \QQ\cup\{\infty\}$ is a field valuation.

\begin{definition}[\cite{BG}]
Let $\cV(X)$ be the set of valuations of $K[X]$ that restrict
to $\Ord$ on~$K$. Any $\val \,\in\, \cV(X)$
specifies an element $[\val]$ of $N_\QQ = {\rm Hom}(M,\QQ)$
by formula
$$[\val](m):=\val(m|_X)\quad\text{for any}\quad m\in M.$$
We define the tropical variety of $X$ set-theoretically as follows:
$$\cA(X):=\bigl\{[\val]\,|\,\val\in\cV(X)\bigr\}\subset N_\QQ.$$
\end{definition}

The next two theorems emphasize that not all valuations in $\cV(X)$ are
needed but we may restrict to valuations
defined by germs of curves or to divisorial valuations.

\begin{theorem}[Einsiedler--Kapranov--Lind \cite{EKL}]
Any $K$-valued point $\gamma\in X(K)$
defines a valuation on $K[X]$
by the formula 
$\val_\gamma(f):=\ord f(\gamma)$ and we have
$$\cA(X)= \bigl\{[\val_\gamma]\,|\,\gamma\in X(K)\bigr\}\subset N_\QQ.$$
\end{theorem}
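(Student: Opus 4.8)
The inclusion $\cA(X)\supseteq\{[\val_\gamma]\mid\gamma\in X(K)\}$ is immediate: for $\gamma\in X(K)$ the evaluation $\mathrm{ev}_\gamma\colon K[X]\to K$ is a homomorphism of $K$-algebras, hence restricts to the identity on $K$, so $\val_\gamma=\Ord\circ\mathrm{ev}_\gamma$ restricts to $\Ord$ on $K$ and belongs to $\cV(X)$. For the reverse inclusion, take $\val=v\circ f\in\cV(X)$ with $f\colon K[X]\to L$ a homomorphism to a field and $v\colon L\to\QQ\cup\{\infty\}$ a valuation. Since $v\circ f$ equals $\Ord$ on $K$, the map $f$ embeds $K$ into $L$, makes $L$ a $K$-algebra with $v|_K=\Ord$, and factors as $K[X]\twoheadrightarrow K[Z]\hookrightarrow K(Z)\hookrightarrow L$, where $Z\subseteq X\times_k\Spec K$ is the integral closed subvariety defined by $\Ker f$ and $K(Z)=\mathrm{Frac}(K[Z])$; restricting $v$ to $K(Z)$ does not change $[\val]$, so we may assume $L=K(Z)$. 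Fix an isomorphism $M\cong\ZZ^n$, identify $\TT\cong\TT^n$ with coordinate characters $x_1,\dots,x_n$, and write $\bar x_i$ for the image of $x_i$ in $K(Z)^\times$. As every character is a Laurent monomial in the $\bar x_i$, we obtain $[\val]=(v(\bar x_1),\dots,v(\bar x_n))\in N_\QQ=\QQ^n$, and likewise $[\val_\gamma]=(\Ord\gamma_1,\dots,\Ord\gamma_n)$ for $\gamma\in X(K)$. So it suffices to produce $\gamma\in Z(K)\subseteq X(K)$ with $\Ord\gamma_i=v(\bar x_i)$ for each $i$.

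Put $w_i:=v(\bar x_i)\in\QQ$. Choosing $\lambda_i\in K^\times$ with $\Ord\lambda_i=w_i$ --- possible because $\Ord$ has value group $\QQ$ --- and applying the monomial automorphism $x_i\mapsto\lambda_i^{-1}x_i$ of $\TT^n$, we reduce to the case $w=0$. Let $\cO\subset K$ be the valuation ring of $\Ord$, with residue field $k$, and set $R:=\cO[x_1^{\pm1},\dots,x_n^{\pm1}]/\mathfrak Q$, where $\mathfrak Q$ is the kernel of the composite $\cO[x^{\pm1}]\hookrightarrow K[x^{\pm1}]\twoheadrightarrow K[Z]$. Then $R$ is a finitely generated $\cO$-algebra, flat over $\cO$ because $R\subseteq K[Z]$ has no $\eps$-torsion, with $R\otimes_\cO K=K[Z]$ and with the $\bar x_i$ among its units. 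Since $w=0$, the valuation $v$ is nonnegative on $R$, and since $v(\eps)=1$ its center $\{r\in R: v(r)>0\}$ contains $\eps$ and therefore lies in the special fibre $\Spec(R\otimes_\cO k)$, which is thus nonempty. The whole problem is now reduced to a purely geometric lifting statement over the valuation ring $\cO$: \emph{every flat finite-type $\cO$-scheme with nonempty special fibre admits an $\cO$-point.} Granting this, an $\cO$-point of $\Spec R$ is a $K$-algebra homomorphism $K[Z]=R\otimes_\cO K\to K$, i.e.\ a point $\gamma\in Z(K)\subseteq X(K)$, and the images of the units $\bar x_i$ lie in $\cO^\times$, so $\Ord\gamma_i=0=w_i$; this completes the proof.

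It remains to prove the lifting statement, and this is the main obstacle. I would argue by induction on the dimension $d$ of the generic fibre. Passing to an irreducible component that meets the special fibre --- flatness over $\cO$ forces every irreducible component to dominate $\Spec\cO$ --- we may assume the scheme is $\Spec R'$ with $R'$ an integral finitely generated $\cO$-algebra, flat and dominant over $\Spec\cO$. If $d=0$, then $R'\otimes_\cO K$ is a finite field extension of the algebraically closed field $K$, hence equals $K$, so $\cO\subseteq R'\subseteq K$ with $\eps$ not a unit of $R'$; a generator of $R'$ of negative $\Ord$-value would force some positive power of $\eps$, hence $\eps$ itself, to be invertible in $R'$, so all generators lie in $\cO$, $R'=\cO$, and the identity is the desired $\cO$-point. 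If $d\ge 1$, pick a closed point $x_0$ of the special fibre --- its residue field is $k$ because $k$ is algebraically closed --- and a sufficiently general $h\in\mathfrak m_{x_0}\subset R'$; then $\Spec(R'/hR')$ is again flat over $\cO$ and carries, through $x_0$, an irreducible component of relative dimension $d-1$ dominant over $\Spec\cO$, so the inductive hypothesis yields an $\cO$-point of it, which pulls back to one of $\Spec R'$. The delicate point is precisely the genericity required of $h$ so that cutting by it preserves flatness over $\cO$ and drops the relative dimension by exactly one; this is the technical heart of the Einsiedler--Kapranov--Lind theorem, and it is here that the algebraic closedness of $k$ (equivalently of $K$) --- which makes $K$-points Zariski dense on every $K$-variety appearing and makes $\cO$ henselian --- is used in an essential way.
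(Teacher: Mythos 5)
The paper does not prove this theorem: it is quoted as a result of Einsiedler--Kapranov--Lind and cited from \cite{EKL}, so there is no in-paper proof to compare against. On its own terms, your outline is a reasonable route. The easy inclusion is correct, and the reduction of the hard inclusion---after factoring $\val$ through the function field of an integral $K$-subvariety $Z$ of $X$ and translating by a monomial automorphism to the case $w=0$---to the lifting claim ``every flat finite-type $\cO$-scheme with nonempty special fibre has an $\cO$-point'' is sound. So is the base case $d=0$, which uses that $K$ is algebraically closed and that the value group of $\Ord$ is all of $\QQ$.

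The genuine gap is the inductive step, which you flag but do not carry out. Choosing $h$ in the maximal ideal of $x_0$ so that $R'/hR'$ remains $\cO$-flat with a component of relative dimension $d-1$ dominating $\Spec\cO$ is exactly the substance of the theorem, and over the non-Noetherian valuation ring $\cO$ (which is not a DVR) one cannot simply invoke Bertini, generic flatness, or prime avoidance as one would over a field. One needs either a careful argument exploiting that $\cO$ is henselian with algebraically closed residue field and divisible value group, or a different strategy altogether (for instance \cite{EKL} reduce to the hypersurface case via projections and a Newton-polygon argument, and other treatments go through Berkovich analytic spaces or Gr\"obner degenerations). Finally, note that the Puiseux field $K$ is algebraically closed only when ${\rm char}\,k=0$; in positive characteristic, which the paper allows, one must pass to Hahn series or to the algebraic closure of $K$, as is done in \cite{EKL}.
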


\begin{theorem}[Hacking-Keel-Tevelev {\cite[\S2]{HKT}}] \label{CorOfMain}
Let $\cW(X)$ be the set of divisorial discrete valuations
of the function field $k(X)$,
i.e.~valuations of the form $c \cdot \val_D$, where $c\in\bQ$
and $\val_D$ is the order of zero-poles along an irreducible divisor $D$ on a normal variety birationally
isomorphic to $X$.
Then
$$\cA(X) =  \bigl\{ [v]\,|\,v\in \cW(X)\bigr\}\subset N_\QQ.$$
Instead of $\cW(X)$, one can use the set of all discrete valuations of $k(X)$ trivial on $k$.
\end{theorem}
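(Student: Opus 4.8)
The plan is to prove both equalities in one stroke, via the chain of inclusions
$$
\cA(X)\ \subseteq\ \{[v]\mid v\in\cW(X)\}\ \subseteq\ \{[v]\mid v\text{ a discrete valuation of }k(X)\text{ trivial on }k\}\ \subseteq\ \cA(X).
$$
The middle inclusion is immediate, since each $c\cdot\val_D\in\cW(X)$ (with $c\in\QQ_{>0}$) has value group $\cong\ZZ$ and is trivial on $k$; once the two outer inclusions are in hand they force all three sets to coincide, which gives both assertions of the theorem at once. So two things remain: (i) every discrete valuation of $k(X)$ trivial on $k$ produces a point of $\cA(X)$, and (ii) every point of $\cA(X)$ is induced by a divisorial valuation.

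For (i), let $v$ be such a valuation and put $w:=[v]\in N_\QQ$ — a rational point, as the value group is $\cong\ZZ$. I would verify that $w$ lies in the tropical variety in its ideal-theoretic form \eqref{deftropvar}, i.e.\ $1\notin\mathrm{in}_w(I_X)$; that this description agrees with $\cA(X)$ is part of the foundations. Since $v(x^a|_X)=\langle a,w\rangle$ for every exponent $a$, applying $v$ to a relation $0=f|_X=\sum_a c_a(x^a|_X)$ shows that no $f\in I_X$ can have a unique term of minimal $w$-weight — precisely the combinatorial criterion for $w\in\cA(X)$. (Alternatively, to stay entirely within the valuative framework: extend scalars to a large algebraically closed field $k'$ containing the residue field of $v$, apply Cohen's structure theorem to embed the completed valuation ring $\kappa[[\pi]]\hookrightarrow k'[[\pi]]\hookrightarrow K'$ — Puiseux series over $k'$, with $\pi\mapsto\eps$, and with no convergence obstruction because $\kappa$ lands in the constant field of $K'$ — thereby realizing $v$ by a $K'$-point of $X$, and then invoke invariance of $\cA(X)$ under ground-field extension.)

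For (ii), fix $w\in\cA(X)$. The case $w=0$ is handled by blowing up a $k$-rational point of $X\subset\TT^n$ and taking an exceptional prime divisor, along which every character restricts to a unit; so assume $w\ne 0$. Since $\cA(X)$ is the support of a rational polyhedral fan and $w$ is rational, choose a fan $\Sigma$ with $|\Sigma|=\cA(X)$ in which $\rho:=\QQ_{\ge 0}w$ is a ray, with primitive generator $u_\rho$, so that $w=s\,u_\rho$ for some $s\in\QQ_{>0}$, and set $\overline X:=X(\Sigma)\subseteq\bP(\Sigma)$. The key geometric input, from \cite{Te,HKT}, is that the closure of $X$ meets a torus orbit $O_\sigma$ exactly when $\mathrm{relint}(\sigma)$ meets $\cA(X)$; hence $D:=\overline X\cap O_\rho\ne\emptyset$. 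Choosing a blow-up of $\overline X$ centered in $D$ and normalizing, I obtain a normal variety birational to $X$ carrying a prime divisor $\widetilde D$ whose image lies in $O_\rho$, so $\val_{\widetilde D}\in\cW(X)$. On the affine chart $U_\rho\subset\bP(\Sigma)$ the orbit $O_\rho$ has a local equation $y$ and every character satisfies $\chi^m=(\text{a unit along }O_\rho)\cdot y^{\langle m,u_\rho\rangle}$; restricting to $\overline X$ and applying $\val_{\widetilde D}$ gives $\val_{\widetilde D}(\chi^m|_X)=a\,\langle m,u_\rho\rangle$, where $a:=\val_{\widetilde D}(y|_{\overline X})\in\ZZ_{>0}$ is independent of $m$. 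Thus $[\val_{\widetilde D}]=a\,u_\rho$, and $w=\bigl[(s/a)\cdot\val_{\widetilde D}\bigr]$ with $(s/a)\cdot\val_{\widetilde D}\in\cW(X)$, as required.

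I expect the crux to be the geometric input used in (ii): that the closure $X(\Sigma)$ genuinely meets the small orbit $O_\rho$ whenever $\rho\subseteq\cA(X)$. This non-emptiness is the substantive fact borrowed from tropical compactification theory; once granted, the remainder of (ii) is a routine local computation on toric charts, the only mild subtlety being that the divisorial valuation extracted from the boundary recovers the prescribed direction $u_\rho$ only up to the positive integer $a$, which is absorbed harmlessly into the rational scalar permitted in the definition of $\cW(X)$. Part (i) is comparatively soft, relying only on the standard equivalence between the valuative and ideal-theoretic descriptions of $\cA(X)$ (or, in the variant given, on the field-extension invariance of tropicalization).
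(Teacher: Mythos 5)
Your outline is sound and part (ii) runs essentially parallel to the paper's argument, but part (i) takes a genuinely different route, and the difference is worth noting because the paper is deliberately arranging its foundations so as \emph{not} to rely on what you invoke. For the inclusion ``discrete valuation trivial on $k$'' $\Rightarrow$ ``point of $\cA(X)$,'' you appeal either to the equivalence of the valuative definition $\cA(X)$ with the ideal-theoretic one (``$1\notin\mathrm{in}_w(I_X)$'') or, in your parenthetical alternative, to Cohen's structure theorem plus invariance of tropicalization under extension of the ground field. But the paper announces at the start of Section 2 that its point of departure is the valuative/Bieri--Groves definition precisely to \emph{avoid} leaning on the equivalence with \eqref{deftropvar}; instead it produces a ring valuation $\mathbf{v}\in\cV(X)$ extending $\Ord$ directly by the explicit formula $\mathbf{v}\bigl(\sum_q a_q\eps^q\bigr)=\min_q\{v(a_q)+q\}$, with a short well-definedness check and a citation of Bourbaki for the valuation axioms. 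That construction is more elementary and self-contained than either of your two options: the ideal-theoretic criterion requires knowing the Fundamental Theorem identifying $\cA(X)$ with $\cT(X)$ (and, as a smaller wrinkle, that $1\in\mathrm{in}_w(I_X)$ forces some single $f\in I_X$ to have a monomial initial form, which you use silently), while the Cohen-structure route imports completion and residue-field gymnastics that the paper never needs. For part (ii), you build a fan supported on all of $\cA(X)$ and cite the general orbit-meets-closure criterion; the paper instead takes the minimal one-ray fan $\cF=\QQ_{\ge0}w$, so that $\bP(\cF)\cong\bA^1\times\TT^{n-1}$, and invokes Lemma~\ref{troptoric} for just that fan, which makes the local computation ($\val_Z(\ox_i)=0$ for $i\ge2$, $\val_Z(x_1)=\lambda$) completely explicit without ever discussing tropical compactifications. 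Your version also adds a blow-up to handle $w=0$; that is harmless but unnecessary since $c$ is allowed to be any rational scalar. In short: correct, but part (i) pays a circularity/heaviness cost that the paper's direct construction of $\mathbf{v}$ avoids, and part (ii) could be streamlined to the single-ray model the paper uses.
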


In \cite{HKT}, Theorem~\ref{CorOfMain}  is deduced from Theorem~\ref{tropfornccmain} 
and Proposition \ref{surj} using resolution of singularities (or alteration of singularities in prime characteristic). We here give a more straightforward proof 
which does not require the use of resolution of singularities.
Our argument is based on the following 
characterization of $\cA(X)$.

\begin{lemma}[{\cite[Lemma 2.2]{Te}}]\label{troptoric}
Let $w \in N \backslash \{0\}$ and let $\cF$ be the one-dimensional fan in $N_\bQ$
  with just one maximal cone $\QQ_{\ge0}w$.
 Let   $D$ the unique toric divisor on $\bP(\cF)$.
 Then $w$ lies in the tropical variety $\cA(X)$ if and only if $D$ intersects
$X(\cF)$.
\end{lemma}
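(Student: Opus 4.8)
The plan is to unwind the valuative definition of $\cA(X)$ and match it with the geometry of the toric variety $\bP(\cF)$ associated with the ray $\QQ_{\ge 0}w$. First I would recall the local structure: the toric divisor $D$ corresponds to the unique ray of $\cF$, and an affine chart of $\bP(\cF)$ along $D$ is $\Spec k[S_w]$, where $S_w = \{ m \in M : \langle m, w\rangle \ge 0\}$ is the semigroup cut out by $w$; the divisor $D$ is the vanishing locus of the ideal generated by those $\chi^m$ with $\langle m, w\rangle > 0$. The point $w$ being computed by a valuation means precisely that there is a valuation $\val$ on $K[X]$, restricting to $\Ord$ on $K$, with $\val(m|_X) = \langle m, w\rangle$ for all $m \in M$; equivalently, by the Einsiedler--Kapranov--Lind theorem already invoked, a $K$-point $\gamma \in X(K)$ with $\ord(\chi^m(\gamma)) = \langle m, w\rangle$ for all $m$.

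For the forward direction, suppose $w \in \cA(X)$, so fix such a $\gamma \in X(K)$. The conditions $\ord(\chi^m(\gamma)) \ge 0$ for $m \in S_w$ say exactly that $\gamma$, viewed in $\TT(K) \subset \bP(\cF)(K)$, extends to a $K[[\eps]]$-valued point of the affine chart $\Spec k[S_w]$; its special fiber (reduction mod $\eps$) is a $k$-point of that chart lying in $X(\cF)$ by continuity/closedness. The extra conditions $\ord(\chi^m(\gamma)) > 0$ for $\langle m, w \rangle > 0$ force this special point to lie on $D$. Hence $D \cap X(\cF) \ne \varnothing$. For the converse, suppose $D$ meets $X(\cF)$, and pick a point $p$ in the intersection. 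Since $D$ is an irreducible (toric) divisor on $\bP(\cF)$, and $X(\cF)$ is the closure of $X$, the irreducible components of $X(\cF) \cap D$ are divisors on $X(\cF)$ (this uses that $X$ meets the torus densely, so no component of $X(\cF)$ lies inside the boundary); choosing a component $E$ through $p$ and a suitable resolution, $E$ gives a divisorial valuation $\val_E$ of $k(X)$. I would then check that $[\val_E] = c\cdot w$ for some positive rational $c$: the class $[\val_E]$ is the ray dual to $D$ in $N_\QQ$ simply because $\val_E(\chi^m) = (\text{mult. of }E\text{ along }D)\cdot \langle m, w\rangle$. After rescaling — using that $\cA(X)$ is stable under positive scaling, which follows from rescaling the valuation — we get $w \in \cA(X)$.

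The main obstacle is the converse direction: making sure that an intersection point of $D$ with $X(\cF)$ really produces a genuine divisorial (or at least a ring) valuation of $K[X]$ pulling back to $\Ord$ and computing $w$, rather than something degenerate. The delicate point is that a priori $X(\cF) \cap D$ could fail to be of pure codimension one in $X(\cF)$, or $X(\cF)$ could be badly singular along it; the fix is to pass to a normalization or a resolution and invoke that on a normal variety every codimension-one point gives a discrete valuation, together with the fact (already in the statement of Theorem~\ref{CorOfMain}) that one may use arbitrary discrete valuations of $k(X)$, not only the "nice" divisorial ones. One also needs the standard toric fact that $X(\cF)$ cannot be contained in the boundary divisor $D$, which holds because $X \subset \TT$ is closed and nonempty in the torus. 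Modulo these standard ingredients, the equivalence is a direct translation between "order of vanishing along $D$" and "pairing with $w$", so I do not expect the remaining bookkeeping to be hard.
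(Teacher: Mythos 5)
The paper does not prove this lemma; it cites it as \cite[Lemma~2.2]{Te} and then uses the forward implication (namely that $w\in\cA(X)$ forces $D\cap X(\cF)\neq\emptyset$) inside the proof of Theorem~\ref{CorOfMain}. So there is no ``paper's own proof'' to match you against directly, but the comparison is still instructive: your converse direction (normalize $X(\cF)$, take an irreducible component of the preimage of $X(\cF)\cap D$, compute $\val_Z$ of the coordinate functions, rescale) is essentially verbatim the argument in the second half of the proof of Theorem~\ref{CorOfMain}, with the one difference that you are \emph{given} $D\cap X(\cF)\neq\emptyset$ rather than deducing it from the lemma; and to certify the resulting divisorial valuation as an element of $\cA(X)$ you rely on the \emph{first} half of that proof (the construction of $\mathbf v$ from a discrete valuation of $k(X)$, formula~\eqref{funnyeq}), which the paper proves independently of the lemma. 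So there is no circularity, and the converse is fine. Your forward direction, via a $K$-point $\gamma$ from the Einsiedler--Kapranov--Lind theorem and the valuative criterion of separatedness applied to $\Spec\cO_K\to\Spec k[S_w]$, is likewise correct and is the natural argument.

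A few small points worth tightening. First, write the valuation ring of the Puiseux series field rather than ``$K[[\eps]]$'': the ring $\{f\in K:\Ord f\geq 0\}$ is a non-noetherian rank-one valuation ring with residue field $k$, and it is that ring, not a power series ring, over which $\gamma$ extends. Second, your aside that ``$\cA(X)$ is stable under positive scaling, which follows from rescaling the valuation'' is slightly off as stated, because rescaling $\val\in\cV(X)$ by $c\neq 1$ no longer restricts to $\Ord$ on $K$; the clean fix is the one the paper uses, namely to rescale the \emph{divisorial} valuation of $k(X)$ (which is built into the definition of $\cW(X)$, where coefficients $c\in\bQ$ are allowed) before applying~\eqref{funnyeq}. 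Third, you hedge between ``a normalization or a resolution''; normalization alone suffices, and this is exactly the point the paper emphasizes when it advertises a proof of Theorem~\ref{CorOfMain} that does not invoke resolution or alteration of singularities, so you should commit to normalization. With these adjustments your argument is a correct and self-contained proof of the cited lemma.
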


\begin{proof}[Proof of Theorem~\ref{CorOfMain}]
Consider any discrete valuation $v$ of the function field $k(X)$ trivial on $k$.
We claim that $v$ induces a ring valuation $\mathbf{v}$ of
$K[X]$
by the formula
\begin{equation}\label{funnyeq}
\mathbf{v}\bigl(\sum_{q\in\QQ}
a_q\eps^q\bigr)=\min_{q \in \QQ}\{v(a_q)+q\},
\end{equation}
where $a_q\in k[X]$.
There exist  $p_1,\ldots,p_s \in K$ and $b_1,\ldots,b_s \in k[X]$ such that
$$\sum_{q\in\bQ}a_q (x) \eps^q=\sum_{i=1}^s p_i(\eps)b_i(x),$$
This identity shows that
 any $a_q$ is a $k$-linear combination of
$b_1,\ldots,b_s$
and therefore $v(a_q)\ge\inf_{i=1}^s v(b_i)$.
It follows that $\mathbf{v}$ is well-defined.
The proof that $\mathbf{v}$ is a valuation goes literally as in
\cite[VI.10.1]{Bo}.
It follows from \eqref{funnyeq} that the restriction of $\mathbf{v}$ to $K$
coincides with $\ord$, and hence $\mathbf{v}\in\cV(X)$.
Therefore, $[v]=[\mathbf{v}]\in \cA(X)$.

It remains to prove that any point $w\in\cA(X)$ has the form $[v]$
for some divisorial discrete valuation $v \in \cW(X)$.
We use Lemma~\ref{troptoric} and its notation.
Note that $\bP(\cF)=\bA^1_{x_1}\times \TT^{n-1}_{x_2,\ldots,x_n}$
where the coordinates $x_1,x_2,\ldots,x_n$ of $\TT = \TT^n$ are chosen appropriately.
By Lemma~\ref{troptoric},  the divisor $D=\{x_1=0\}$ of
$\bP(\cF)$ intersects the closure $X(\cF)$ of $X$.
Let $\nu:\,\tilde X(\cF)\to X(\cF)$ be the normalization
and let $Z$ be an irreducible component of $\nu^{-1}(X(\cF) \cap D)$.
For $i \geq 2$, the coordinate function $x_i$ is invertible on
$\PP(\cF)$ and hence so is its restriction $\ox_i$ to $X(\cF)$
and its pull-back to $\tilde X(\cF)$.
This shows that $\val_Z(\ox_2) = \cdots = \val_Z(\ox_n) = 0$.
Since $\nu^*(\ox_1)$ vanishes on $Z$, we have
$\val_Z(x_1) = \lambda$ for some positive integer $\lambda$.
This implies that $w=\lambda \cdot [\val_Z]$ in $N = M^\vee$.
Now if we set $v:=(1/\lambda) \cdot \val_Z $ then $v\in \cW(X)$ and $[v] = w $.
\end{proof}

The description of $\mathcal{T}(X)$ using divisorial valuations becomes absolutely explicit
if $X$ is smooth and has a known compactification with normally crossing boundary:

\begin{theorem}[Hacking-Keel-Tevelev {\cite[\S2]{HKT}}] \label{tropfornccmain}
Assume that $X $ is smooth
and $\overline{X} \supset X $ is any compactification whose boundary
$D = \overline{X} \backslash X$
is a divisor with simple normal crossings.  Let $D_1,\ldots,D_m$
denote the irreducible components of $D$, and write
$\Delta_{X,D}$ for the simplicial complex on $\{1,\ldots,m\}$
with $\,\{i_1,\ldots,i_l\} \in \Delta_{X,D}\,$ if and only if
$D_{i_1} \cap \cdots \cap D_{i_l}$
is non-empty. Define $[D_i]:=[\val_{D_i}] \in N$,
 and, for any $\sigma\in \Delta_{X,D}$, let $[\sigma]$ be the cone in $N_\QQ$
 spanned by $\{[D_i]\,:\,i\in \sigma\}$. Then
\begin{equation}
\label{unionofcones}
 \cA(X) \,\,\,= \,\,\, \bigcup_{\sigma \in
\Delta_{X,D}}[\sigma].
\end{equation}
\end{theorem}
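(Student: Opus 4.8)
The plan is to prove the two inclusions in \eqref{unionofcones} separately, using the valuative description of $\cA(X)$ from Theorem~\ref{CorOfMain} together with Lemma~\ref{troptoric}. For the inclusion $\bigcup_\sigma [\sigma] \subseteq \cA(X)$, I would first check it on the rays: each $[D_i] = [\val_{D_i}]$ lies in $\cA(X)$ since $\val_{D_i}$ is a divisorial valuation of $k(X)$ trivial on $k$, so Theorem~\ref{CorOfMain} applies directly. Since $\cA(X)$ is closed under multiplication by $\QQ_{\ge 0}$ (scale the valuation) and is a finite union of rational polyhedral cones, it suffices to produce, for each face $\sigma = \{i_1,\dots,i_l\} \in \Delta_{X,D}$ and each lattice point in the relative interior of $[\sigma]$, a divisorial valuation mapping to it. The natural candidate is a \emph{monomial} (toroidal) valuation: near a point $p \in D_{i_1} \cap \cdots \cap D_{i_l}$ with local coordinates $t_1,\dots,t_l$ cutting out the branches of $D$, the valuation $\val$ assigning $\val(t_j) = a_j \in \ZZ_{>0}$ (for $j \le l$) and $\val = 0$ on units is realized geometrically by a toric blow-up of $\overline X$ along the stratum, whose exceptional divisor $E$ satisfies $[\val_E] = \sum_j a_j [D_{i_j}]$; this gives a divisorial valuation in $\cW(X)$ with the prescribed image, and letting $(a_1,\dots,a_l)$ range over all positive integer vectors fills out the interior of $[\sigma]$.

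For the reverse inclusion $\cA(X) \subseteq \bigcup_\sigma [\sigma]$, I would take an arbitrary $w \in \cA(X)$, which by Theorem~\ref{CorOfMain} equals $[\val_D]$ for some divisorial valuation $D$ on a normal model of $X$, and then invoke Lemma~\ref{troptoric}: forming the one-dimensional fan $\cF = \QQ_{\ge 0} w$ (assuming $w \in N$; the general case follows by scaling), the corresponding toric divisor meets $X(\cF)$. The key geometric input is that because $\overline X$ has \emph{simple normal crossings} boundary, the closure $X(\cF)$ in the toric compactification is governed by the local toroidal structure of $(\overline X, D)$: analytically near a boundary point, $\overline X$ looks like an affine space with $D$ a coordinate-hyperplane arrangement, so the map to the toric variety $\PP(\cF)$ is monomial in the $D_i$-coordinates, and the image of the degeneration (the point where $D$ meets $X(\cF)$) lies in the cone $\QQ_{\ge 0}$-spanned by those $[D_i]$ whose defining coordinate appears with positive weight. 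Concretely, one argues that $w$ must be a nonnegative rational combination $\sum c_i [D_i]$ with $c_i > 0$ only for $i$ in some $\sigma$ with $D_{i_1}\cap\cdots\cap D_{i_l} \ne \emptyset$, i.e.\ $\sigma \in \Delta_{X,D}$.

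The main obstacle is this reverse inclusion, specifically controlling how $X(\cF)$ sits inside $\PP(\cF)$ in terms of the normal-crossing data. The clean way to handle it is to pass to a toric modification refining both $\cF$ and the fan structure induced by $\Delta_{X,D}$, use that $X$ smooth with SNC boundary makes $(\overline X, D)$ a toroidal embedding, and then apply the standard dictionary (Kempf–Knudsen–Mumford–Saint-Donat) identifying divisorial valuations that ``see'' the boundary with lattice points in the cone complex $\Delta_{X,D}$; any such valuation with positive value on some $\ox_i$ restricted to $X$ forces the corresponding strata to be nonempty, which is exactly the condition defining $\Delta_{X,D}$. I would also remark that one must be slightly careful that divisorial valuations \emph{trivial} on all the $\ox_i$ (equivalently lying ``over'' the interior $X$) contribute $[\val] = 0$, which is consistent with $0 \in [\sigma]$ for $\sigma = \emptyset$. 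Everything else — closedness under scaling, finiteness of the cone decomposition, and the rationality of the $[D_i]$ — is routine.
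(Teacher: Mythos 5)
The paper does not actually supply a proof of Theorem~\ref{tropfornccmain}; it is cited verbatim from \cite[\S2]{HKT}, and the only editorial remark about its proof is Remark~\ref{importantremark}, which says that the \emph{inclusion} $\cT(X)\subseteq\bigcup_\sigma[\sigma]$ already holds for $\oX$ merely normal. Crucially, the paper's own proof of Theorem~\ref{CorOfMain} is given independently (via Lemma~\ref{troptoric}) precisely so that it may be used here without circularity, which is the logical arrangement you rely on. So there is no ``paper's proof'' to match, and your task is really to reconstruct the argument of \cite{HKT}.

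Your forward inclusion $\bigcup_\sigma[\sigma]\subseteq\cA(X)$ is essentially right. Two small points deserve to be made explicit: (i) the claim $[\val_E]=\sum_j a_j[D_{i_j}]$ for a weighted toroidal blow-up needs the observation that for each character $m$, the function $m|_X$ factors near the generic point of the stratum $D_{i_1}\cap\cdots\cap D_{i_l}$ as a local unit times $\prod_j t_j^{[D_{i_j}](m)}$, and $\val_E$ kills units; and (ii) it suffices to hit the rational points of the relative interior of $[\sigma]$, which the integral weight vectors $(a_1,\dots,a_l)$ do after scaling.

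The reverse inclusion is where there is a genuine gap. You invoke Lemma~\ref{troptoric} to say that, with $\cF=\QQ_{\ge0}w$, the toric boundary divisor of $\bP(\cF)$ meets $X(\cF)$, and then assert that ``the closure $X(\cF)$ in the toric compactification is governed by the local toroidal structure of $(\oX,D)$'' and that ``the map to $\bP(\cF)$ is monomial in the $D_i$-coordinates.'' But $\bP(\cF)$ and $\oX$ are two unrelated compactifications of $X$; there is no morphism $\oX\to\bP(\cF)$, only a rational map, and nothing in the SNC hypothesis gives direct control on $X(\cF)\subset\bP(\cF)$. To make this rigorous one would have to pass to a common modification (essentially re-introducing the resolution step the paper's rearrangement is designed to avoid). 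The clean route, and the one that matches the use of Theorem~\ref{CorOfMain}, bypasses Lemma~\ref{troptoric} entirely: given $w\in\cA(X)$, write $w=c\,[\val_Z]$ with $\val_Z$ a divisorial (or merely discrete) valuation of $k(X)$. Since $\oX$ is proper, $\val_Z$ has a center $V\subset\oX$ (valuative criterion of properness). If $V\subset X$, then every $m|_X$ is a unit at the generic point of $V$ and $w=0\in[\emptyset]$. Otherwise set $\sigma=\{i:V\subset D_i\}$; then $\bigcap_{i\in\sigma}D_i\supseteq V\neq\emptyset$, so $\sigma\in\Delta_{X,D}$, and near the generic point of $V$ the SNC structure gives $m|_X=u\cdot\prod_{i\in\sigma}t_i^{[D_i](m)}$ with $u$ a unit, whence $[\val_Z](m)=\sum_{i\in\sigma}\val_Z(t_i)\,[D_i](m)$ with $\val_Z(t_i)\ge 0$, i.e.\ $w\in[\sigma]$. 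Your appeal to the KKMS toroidal dictionary is the right instinct, but as written it does not identify which cone $[\sigma]$ receives $w$ nor why that $\sigma$ lies in $\Delta_{X,D}$; the center argument is exactly what makes both of those precise.
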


\begin{remark}\label{importantremark}
The proof in \cite{HKT} shows that $\cT(X)$ 
is contained in the right hand side of (\ref{unionofcones}) if $\oX$ is just normal, without any smoothness or normal crossings conditions.
But this containment can be strict: if too many boundary divisors pass through a point
then the right hand side can contain cones
of dimension greater than $\dim X$.
But $\cT(X)$ does not contain such cones 
because $\dim \cT(X) =  \dim X$.
\end{remark}

Later we will discuss ways of weakening the normal crossing assumptions.
But our main technique is to use Theorem~\ref{tropfornccmain} 
together with the following basic result.

\begin{proposition}[{\cite[3.2]{Te}}]\label{surj}
Consider a commutative diagram of elimination theory
\begin{equation}
\label{CommDiag}
\begin{CD}
 X @>f>> Y\\
@VVV                 @VVV \\
 \TT  @> \alpha >> \tilde \TT
\end{CD}
\end{equation}
where vertical arrows are closed embeddings in  tori,
$f$ is dominant, and $\alpha$ is a homomorphism of tori.
Let $\tilde M$ be the character lattice of $\tilde \TT$ and $\tilde N$
its dual.
Then $\alpha$ induces a $\bZ$-linear map $\mbA :\, N_\bQ \to {\tilde N}_\bQ$
and we have $\,\mbA(\cA(X))=\cA(Y)\,$ as in (\ref{Talpha=AT}).
\end{proposition}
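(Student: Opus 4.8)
The plan is to argue directly from the valuative description $\cA(X)=\bigl\{[\val]\,|\,\val\in\cV(X)\bigr\}$, proving the two inclusions $\mbA(\cA(X))\subseteq\cA(Y)$ and $\cA(Y)\subseteq\mbA(\cA(X))$ separately. To set up, $\alpha$ induces a homomorphism of character lattices $\alpha^{*}\colon\tilde M\to M$, and $\mbA\colon N_\bQ\to\tilde N_\bQ$ is its $\bQ$-linear dual, so $\langle\mbA x,\tilde m\rangle=\langle x,\alpha^{*}\tilde m\rangle$ for all $x\in N_\bQ$ and $\tilde m\in\tilde M$. Commutativity of the diagram \eqref{CommDiag} says exactly that $(\alpha^{*}\tilde m)|_{X}=f^{*}(\tilde m|_{Y})$ in $k[X]$ for every $\tilde m\in\tilde M$, where $f^{*}\colon k[Y]\to k[X]$ is the comorphism of $f$. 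Since $f$ is dominant and $X,Y$ are irreducible, $f^{*}$ is injective, and after tensoring with $K$ over $k$ it becomes an injective $K$-algebra map $K[Y]\hookrightarrow K[X]$ that is the identity on $K$; I denote it again by $f^{*}$.

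For the first inclusion I would take $\val\in\cV(X)$ and put $\mu:=\val\circ f^{*}$. Precomposing a ring valuation with a ring homomorphism again produces a map of the required form $v\circ g$, and $\mu$ restricts to $\Ord$ on $K$ since $f^{*}$ is the identity there, so $\mu\in\cV(Y)$. Unwinding the definitions and using the commutativity identity, for every $\tilde m\in\tilde M$,
$$[\mu](\tilde m)=\val((\alpha^{*}\tilde m)|_{X})=[\val](\alpha^{*}\tilde m)=\langle\mbA[\val],\tilde m\rangle,$$
hence $[\mu]=\mbA[\val]\in\cA(Y)$.

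The real content is the second inclusion. Given $w\in\cA(Y)$, I would use Theorem~\ref{CorOfMain} to write $w=[v_{0}]$ for a divisorial valuation $v_{0}$ of the function field $k(Y)$ trivial on $k$. Since $f$ is dominant it induces a field embedding $k(Y)\hookrightarrow k(X)$, and I would extend $v_{0}$ to a valuation $v_{1}$ of $k(X)$. The point needing care is to keep $v_{1}$ in the class handled by Theorem~\ref{CorOfMain}, namely a (possibly rescaled) discrete valuation of $k(X)$ trivial on $k$: this can be arranged by first extending $v_{0}$ over a transcendence basis of $k(X)/k(Y)$ via the Gauss valuation, which leaves the value group unchanged, and then extending over the remaining finite algebraic extension, which enlarges the value group only by a finite index, so it stays a rank-one cyclic subgroup of $\bQ$. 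Then Theorem~\ref{CorOfMain} gives $[v_{1}]\in\cA(X)$, and since $v_{1}$ restricts to $v_{0}$ on $k(Y)$, for every $\tilde m\in\tilde M$,
$$\langle\mbA[v_{1}],\tilde m\rangle=v_{1}((\alpha^{*}\tilde m)|_{X})=v_{1}(f^{*}(\tilde m|_{Y}))=v_{0}(\tilde m|_{Y})=\langle w,\tilde m\rangle,$$
so $\mbA[v_{1}]=w$. I expect this extension step to be the crux: the general extension theorem for valuations (Chevalley--Zorn) yields \emph{some} extension of $v_{0}$, but a priori one of higher rank or with irrational values transverse to $Y$, so the work lies in exhibiting a harmless extension without using resolution of singularities — resolving the rational map from $X$ to a model of $Y$ carrying the divisor of $v_{0}$ and taking a divisorial valuation above it would also do, but Theorem~\ref{CorOfMain} was designed precisely to avoid resolution.
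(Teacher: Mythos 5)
Your proof is correct and takes essentially the same route as the paper: the hard inclusion $\cA(Y)\subseteq\mbA(\cA(X))$ is handled by invoking Theorem~\ref{CorOfMain}, writing $w=[v_0]$ for a discrete valuation $v_0$ of $k(Y)$ trivial on $k$, extending $v_0$ to a discrete valuation $v_1$ of $k(X)$, and observing that $\mbA[v_1]=w$ by the very computation you display. The paper's proof is in fact terser than yours in two places, both of which you fill in correctly. First, the paper states only the hard inclusion and leaves the other one implicit; your argument via $\mu=\val\circ f^{*}$ for the ring-valuation definition is a clean way to supply it (one could also restrict a discrete valuation of $k(X)$ to $k(Y)$, which is how the omitted direction is usually handled in this framework). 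Second, the paper simply asserts ``let $v$ be a discrete valuation of $k(X)$ such that $v|_{k(Y)}=\tilde v$'' as if existence were immediate; you are right that the Chevalley extension theorem alone does not guarantee a discrete rank-one extension, and your two-step argument (Gauss extension over a transcendence basis of $k(X)/k(Y)$ preserving the value group, followed by extension over the finite algebraic part which enlarges the value group only by finite index) is exactly the standard justification. In short, you have reconstructed the paper's argument and repaired its small gaps rather than found a genuinely different proof.
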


\begin{proof}
We derive this from Theorem~\ref{CorOfMain}.
Let $y\in\cA(Y)$ and let $\tilde v$ be a discrete valuation of $k(Y)$
such that $[\tilde v]=y$. Let $v$ be a discrete valuation of $k(X)$
such that $v|_{k(Y)}=\tilde v$.
 We claim that $\mbA([v])=y$.
We have to check that $\tilde m(\mbA([v]))=\tilde m(y)$ for any $\tilde m\in\tilde M$.
This means that $v(\alpha^*m)=\tilde v(m)$, which is obvious.
\end{proof}

Combining Theorem~\ref{tropfornccmain} and~
Proposition~\ref{surj} gives the following corollary.

\begin{corollary}\label{tropforncc}
Suppose that the hypotheses of Proposition~\ref{surj} and Theorem~\ref{tropfornccmain}
hold. Then the tropical variety $\cA(Y)$ in ${\tilde N}_\bQ$
is the union of the cones $\mbA ([\sigma])$  where $\sigma$ runs over all
maximal simplices of
$\Delta_{X,D}$. The rays $\mbA ([D_i])$ of these cones
are the linear forms on $\tilde M$ given by
$\mbA([D_i]) (\chi) \,:=\,\val_{D_i}(\chi\circ f) $.
\end{corollary}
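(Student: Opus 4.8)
The statement to prove is Corollary~\ref{tropforncc}, which combines Theorem~\ref{tropfornccmain} with Proposition~\ref{surj}. Since both ingredients are already stated in the excerpt, the proof should essentially be an assembly argument. Let me sketch it.

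The plan is to simply compose the two results. First I would apply Theorem~\ref{tropfornccmain} to the smooth variety $X$ with its simple normal crossings compactification $\oX \supset X$, obtaining $\cA(X) = \bigcup_{\sigma \in \Delta_{X,D}} [\sigma]$, the union of cones spanned by the rays $[D_i] = [\val_{D_i}]$. Then I would apply Proposition~\ref{surj} to the commutative diagram \eqref{CommDiag}, which gives $\cA(Y) = \mbA(\cA(X))$. Since $\mbA$ is linear, it commutes with unions and with the cone-spanning operation: $\mbA([\sigma])$ is the cone spanned by $\{\mbA([D_i]) : i \in \sigma\}$. Hence $\cA(Y) = \mbA(\cA(X)) = \mbA\bigl(\bigcup_\sigma [\sigma]\bigr) = \bigcup_\sigma \mbA([\sigma])$, and it suffices to take $\sigma$ ranging over the maximal simplices since cones of subsimplices are faces of these.

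The one genuinely substantive point, rather than pure formalism, is the identification of the rays: I must check that $\mbA([D_i])$, viewed as a linear form on $\tilde M$, is given by $\chi \mapsto \val_{D_i}(\chi \circ f)$. This follows by unwinding the definition of $\mbA$ from Proposition~\ref{surj}: the valuation $\val_{D_i}$ of $k(X)$ restricts (after the birational identification) to a valuation $\tilde v$ of $k(Y)$ with $[\tilde v] = \mbA([\val_{D_i}])$, and by definition $[\tilde v](\chi) = \tilde v(\chi|_Y) = \val_{D_i}(\chi \circ f)$, where $\chi \circ f$ denotes the pullback of the character $\chi$ along $f : X \to Y$. Equivalently, $\alpha^* \chi$ is a monomial on $\TT$, and its restriction to $X$ has $\val_{D_i}$-order equal to $\langle [D_i], \mbA^\vee \chi\rangle = \langle \mbA [D_i], \chi\rangle$; this is exactly the computation already performed in the proof of Proposition~\ref{surj}.

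I do not anticipate any real obstacle here: the statement is a corollary in the literal sense, and the only thing requiring a moment's care is keeping straight which maps are pullbacks on coordinate rings versus pushforwards on the lattices $N_\bQ \to \tilde N_\bQ$, and observing that $\mbA$ of a polyhedral cone is again a polyhedral cone spanned by the images of the generating rays. One should also remark that the displayed description is at the level of \emph{sets}; the refined pushforward of multiplicities promised by Theorem~\ref{funnyform} is a separate matter, deferred to Section~\ref{multsection}, and need not be addressed in this corollary.
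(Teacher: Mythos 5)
Your proposal is correct and matches the paper's approach exactly: the paper gives no explicit proof, merely noting that the corollary follows by combining Theorem~\ref{tropfornccmain} (which identifies $\cA(X)$ with $\bigcup_\sigma[\sigma]$) and Proposition~\ref{surj} (which gives $\cA(Y)=\mbA(\cA(X))$), and you supply precisely that assembly together with the correct unwinding of $\mbA([D_i])(\chi)=\val_{D_i}((\alpha^*\chi)|_X)=\val_{D_i}(\chi\circ f)$. The remark that multiplicities are deferred is also apt; nothing is missing.
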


\begin{remark}[Intrinsic Torus {\cite[\S3]{Te}}]\label{intrinsictorus}
We wish to explain why only monomial maps are considered
in our elimination problem. 
Recall that an irreducible algebraic variety
$X$ is called {\em very affine}
 if it is isomorphic to a closed subvariety of an algebraic torus. Intrinsically this
means that $X$ is affine and its coordinate algebra $k[X]$ is generated by its units~$k[X]^*$.
In this case any choice of invertible generators $x_1,\ldots,x_n\in k[X]$ gives a closed embedding $X\subset \TT^n$. By a theorem of Samuel \cite{Sam}, 
the group of units $k[X]^*/k^*$ is finitely generated.
Hence any very affine variety is a closed subvariety of a canonical
{\em intrinsic torus} $\TT$ with character lattice $k[X]^*/k^*$.
Moreover, any morphism to the algebraic torus $f\,:X\to\tilde\TT$ 
is the restriction of the homomorphism
$\alpha:\,\TT\to\tilde\TT$ defined as follows:
the pull-back $\alpha^*$ is  the 
composition 
$$k[\tilde\TT]^*/k^*\to k[X]^*/k^*\simeq k[\TT]^*/k^*.$$
It follows that any elimination problem is monomial, at least in principle.
\end{remark}

All of the above results
characterize the tropical variety $\cT(X)$ only as a subset of a vector space over $\QQ$.
Next we recall the definition of tropical fans.
In Section~\ref{multsection}, this will furnish us with
 a convenient framework  for defining
multiplicities on $\cT(X)$.

\begin{definition}[\cite{Te}] \label{def:tropfan}
Let $\cF$ be any fan in $N_\QQ$ and $X $ a very affine variety in $\TT$.
We say that $\cF$~is a {\em tropical} fan for $X$ if $X(\cF)$ is proper and the multiplication map
\begin{equation}\label{multmap}
\Psi:\, \TT \times X(\cF)\,\to \,\bP(\cF)
\end{equation}
is flat and surjective. Note that this property depends on
both $X$ and $\cF$.
\end{definition}

\begin{theorem}[{\cite[1.2, 2.5]{Te}}]\label{BasicTe} Tropical fans have the
following properties:
\begin{enumerate}
\item If $\cF$ is a tropical fan (for $X$) then the support of $\cF$
equals the tropical variety $\cT(X)$. Any fan $\cF$ 
supported on $\cA(X)$ has a tropical refinement $\cF'$.
\item If $\cF$ is tropical and $\cF'$ is any refinement of $\cF$ then
$\cF'$ is tropical as well.
\item 
Let $\Gamma\subset\cF$ be a maximal-dimensional cone
and let $Z\subset\bP(\cF)$ be the corresponding toric stratum.
If $\cF$ is tropical then $\dim X(\cF)\cap Z=0$.
If, moreover, $\bP(\cF)$ is smooth then $X(\cF)$ is Cohen-Macaulay
at any point of $X(\cF)\cap Z$.
\end{enumerate}
\end{theorem}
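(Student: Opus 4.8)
The plan is to treat the three assertions in turn, the common device being that $\Psi$ is equivariant for the translation action of $\TT$ on $\bP(\cF)$ and on the first factor of $\TT\times X(\cF)$; hence, for a cone $\sigma\in\cF$ with torus orbit $O_\sigma\subset\bP(\cF)$ and spanning subtorus $\TT_\sigma\subset\TT$, one has $\Psi^{-1}(O_\sigma)=\TT\times(X(\cF)\cap O_\sigma)$, and the fibre of $\Psi$ over any point of $O_\sigma$ is isomorphic to $\TT_\sigma\times(X(\cF)\cap O_\sigma)$. For the first assertion, the inclusion $\cT(X)\subset|\cF|$ would use only properness of $X(\cF)$: given $w\in\cT(X)$, the Einsiedler--Kapranov--Lind theorem produces $\gamma\in X(K)$ with $[\val_\gamma]=w$, that is, a $K$-point of $X\subset X(\cF)$; since a single Puiseux point involves only finitely many denominators we may work over the discrete valuation ring $k[[\eps^{1/N}]]$, so the valuative criterion of properness extends $\gamma$ to an $R$-point, i.e.\ $\gamma(\eps)$ acquires a limit in $\bP(\cF)$, and testing that limit against the monomials $m\in\sigma^\vee\cap M$ for the cone $\sigma\in\cF$ whose orbit contains it forces $\langle w,m\rangle\ge 0$ for all such $m$, that is, $w\in\sigma\subset|\cF|$. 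For $|\cF|\subset\cT(X)$ I would use surjectivity of $\Psi$: it forces $X(\cF)$ to meet every orbit $O_\sigma$, and Lemma~\ref{troptoric} --- extended from rays to arbitrary cones by the same degeneration argument, and using that for a tropical fan the intersection $X(\cF)\cap O_\sigma$ is independent of the chosen point in the relative interior of $\sigma$ --- then gives $\operatorname{relint}\sigma\subset\cT(X)$. Finally, to produce a tropical refinement of a given fan $\cF_0$ with $|\cF_0|=\cT(X)$, I would either flatten $X(\cF_0)$ over $\bP(\cF_0)$ along a toric modification (Raynaud--Gruson), or first exhibit one tropical fan --- from a simple normal crossings compactification of $X$ (available after resolution of singularities, or an alteration in positive characteristic) via the fan of Theorem~\ref{tropfornccmain}, whose flatness one reads off from the local product structure of the boundary --- and then take a common refinement with $\cF_0$, which is tropical by the second assertion.

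For the second assertion, the refinement induces a proper birational $\TT$-equivariant toric morphism $\pi:\bP(\cF')\to\bP(\cF)$ whose restriction to $X(\cF')$ (the strict transform of $X(\cF)$) is a proper birational morphism onto $X(\cF)$; hence $X(\cF')$ is proper, and $\Psi'$ is surjective because $\Psi$ and $\pi$ are. Flatness of $\Psi'$ is the substantive point. I would first note that flatness of $\Psi$ forces the expected-dimension property $\dim(X(\cF)\cap O_\sigma)=\dim X-\dim\sigma$ for all $\sigma$ (the fibres of a flat morphism locally of finite type over an irreducible base are equidimensional of complementary dimension), and then show this property is inherited by $X(\cF')$: the orbit $O_{\sigma'}$ maps onto $O_\sigma$ for the minimal cone $\sigma\in\cF$ containing $\sigma'$, with fibres of dimension $\dim\sigma-\dim\sigma'$, giving the upper bound $\dim(X(\cF')\cap O_{\sigma'})\le\dim X-\dim\sigma'$, while the reverse inequality follows from surjectivity of $\Psi'$ together with the fact that in a smooth chart $O_{\sigma'}$ is cut out in $\bP(\cF')$ by $\dim\sigma'$ equations (the general, non-smooth case requiring extra care). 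Flatness of $\Psi'$ itself would then be extracted from the local criterion of flatness applied along the torus orbits, transferring the flatness of $\Psi$ through $\pi$ with the help of this dimension count.

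For the third assertion, $\dim(X(\cF)\cap Z)=0$ is the case of a maximal cone $\Gamma$ in the expected-dimension property: the fibre of the flat surjection $\Psi$ over a point of $Z=O_\Gamma$ is $\TT_\Gamma\times(X(\cF)\cap Z)$, of dimension $\dim X$, while $\dim\TT_\Gamma=\dim\Gamma=\dim\cT(X)=\dim X$ by the first assertion. When $\bP(\cF)$ is smooth, $\TT\times X(\cF)$ is flat over the regular scheme $\bP(\cF)$, hence Cohen--Macaulay at a point exactly when its fibre there is; the fibre over a point of $Z$ is $\TT_\Gamma\times(X(\cF)\cap Z)$ with $X(\cF)\cap Z$ zero-dimensional and therefore Cohen--Macaulay, so $\TT\times X(\cF)$, and hence $X(\cF)$, is Cohen--Macaulay at every point lying over $Z$.

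The step I expect to be the genuine obstacle is the flatness assertion in the second part: propagating flatness of the multiplication map across the exceptional loci of the toric blow-ups comprising $\pi$, and in particular seeing that it is $X(\cF')$, rather than the full scheme-theoretic preimage $\pi^{-1}(X(\cF))$, that carries the flat family. This is precisely the point at which one uses that $\cF$ is tropical, and not merely that $X(\cF)$ is proper.
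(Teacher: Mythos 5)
The paper does not give its own proof of this statement: it is quoted verbatim from Tevelev~\cite{Te} (items 1.2 and 2.5 there), and the only internal hint of how the proof goes appears in Lemma~\ref{propmult}, where the diagram
$$
\begin{CD}
\TT\times X(\cF') @>{\Psi'}>> \bP(\cF')\\
@VVV  @VVfV \\
\TT\times X(\cF) @>{\Psi}>> \bP(\cF)
\end{CD}
$$
is asserted to be a \emph{fiber square}, with citation to~\cite[Prop.~2.5]{Te}. That fiber-square property is the engine behind part~(2): once one knows $\TT\times X(\cF')\cong(\TT\times X(\cF))\times_{\bP(\cF)}\bP(\cF')$, the flatness, surjectivity and properness of $\Psi'$ are all \emph{immediate} by base change and the properness of $f$. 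Your proposed route for part~(2) --- establishing equidimensionality of the boundary strata and then invoking a ``local criterion of flatness'' --- does not close. Equidimensionality of fibres is strictly weaker than flatness; to upgrade it one needs something like miracle flatness (equidimensional $+$ Cohen--Macaulay source over a regular base), but Cohen--Macaulayness of $X(\cF')$ along the boundary is precisely the content of part~(3), which itself rests on part~(2). Your final sentence, that it is $X(\cF')$ and not $\pi^{-1}(X(\cF))$ that carries the flat family, correctly locates the subtle point, but the resolution is not the local criterion: it is the nontrivial assertion that the strict transform \emph{agrees} scheme-theoretically with the flat pullback, which is exactly the fiber-square lemma you have not recovered.

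Your part~(1) also has a soft spot: surjectivity of $\Psi$ shows only that $X(\cF)$ meets every orbit $O_\sigma$, and Lemma~\ref{troptoric} (even in its cone version) then yields that $\operatorname{relint}\sigma$ \emph{intersects} $\cT(X)$, not that $\operatorname{relint}\sigma\subset\cT(X)$. The remark that ``$X(\cF)\cap O_\sigma$ is independent of the chosen point in the relative interior of~$\sigma$'' is not a fact one can quote at this stage; to show a particular $w\in\operatorname{relint}\sigma$ lies in $\cT(X)$ one wants to star-subdivide $\cF$ at the ray $\QQ_{\ge0}w$, apply part~(2) to keep the subdivision tropical, and then test the new ray against Lemma~\ref{troptoric} --- so the logical order is again that part~(2) comes first. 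Your direction $\cT(X)\subset|\cF|$ via the valuative criterion and $\sigma^\vee\cap M$ is correct, your existence argument via an SNC compactification is sound, and your part~(3) is essentially the argument one wants (miracle flatness over a regular base with zero-dimensional Cohen--Macaulay fibres, then descending along the smooth projection $\TT\times X(\cF)\to X(\cF)$). In summary: part~(3) is right given the earlier parts, part~(1) needs the subdivision trick, and part~(2) genuinely requires the fiber-square lemma from~\cite[Prop.~2.5]{Te}, which is the one ingredient your proposal is missing.
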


The notion of a tropical fan is a very flexible generalization of the
Gr\"obner fan structure on $\mathcal{T}(X)$ used in \cite{BJSST,DFS,SS}.
We briefly review this connection.
To keep the notation simple, we assume from now on
that the normalizer of $X$ in $\TT$ is trivial.

\begin{theorem}[{\cite[1.7]{Te}}] \label{Groebnertrop}
Let $\cG $  be any complete fan in $ N_\bQ$.
Consider the action of  $\TT$ on the Hilbert scheme of $\bP(\cG)$ by the formula 
$t\cdot[S]=[t^{-1}S]$ for any subscheme $S\subset\bP(\cG)$.
The normalization of the orbit closure $\overline{\TT\cdot [X(\cG)]}$
in the Hilbert scheme is a toric variety for $\TT$. Let $\tilde\cF$ be the
complete fan of this toric variety.
The intersection  of $\tilde\cF$  with $\cT(X)$ is a subfan of $\tilde\cF$,
and that subfan is a tropical fan for $X$.
\end{theorem}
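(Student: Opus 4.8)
The plan is to proceed in three stages: identify the normalized orbit closure and describe its fan $\tilde\cF$ via Gr\"obner degenerations of $X(\cG)$; deduce that $\cT(X)$ is a union of closed cones of $\tilde\cF$, so the indicated intersection is a genuine subfan $\cF_0$; and verify that $\cF_0$ satisfies Definition~\ref{def:tropfan}. For the first stage, the stabilizer of the Hilbert point $[X(\cG)]$ under the action $t\cdot[S]=[t^{-1}S]$ is the subgroup scheme $\{t\in\TT : tX(\cG)=X(\cG)\}$, which equals the normalizer of $X$ in $\TT$ and is trivial by our standing assumption; hence the orbit map $\TT\to\Hilb(\bP(\cG))$ is an immersion, $\bar O:=\overline{\TT\cdot[X(\cG)]}$ is a $\TT$-equivariant partial compactification of $\TT$, and its normalization $\bP(\tilde\cF)$ is a normal variety with dense torus orbit, i.e.\ a toric variety; it is complete since $\bar O$ is closed in the projective scheme $\Hilb(\bP(\cG))$. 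If $v\in N$ lies in the relative interior of a cone $\sigma\in\tilde\cF$, then $\lim_{t\to0}\lambda_v(t)\cdot[X(\cG)]$ lies in the orbit $O_\sigma\subseteq\bP(\tilde\cF)$ and corresponds, under the natural morphism $g:\bP(\tilde\cF)\to\Hilb(\bP(\cG))$, to the flat limit $S_v:=\lim_{t\to0}\lambda_v(t)^{-1}X(\cG)\subseteq\bP(\cG)$; as $v$ varies over ${\rm relint}(\sigma)$ this limit point moves within the single orbit $O_\sigma$, so $S_v$ changes only by a torus translation, and since $S_v\cap\TT=V({\rm in}_v(I_X))$ the ideal ${\rm in}_v(I_X)$ is, up to a monomial coordinate change, constant along ${\rm relint}(\sigma)$.

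Second, by \eqref{deftropvar} a lattice point $v$ lies in $\cT(X)$ iff $1\notin{\rm in}_v(I_X)$, i.e.\ iff $S_v\cap\TT\neq\emptyset$, and by the previous step this condition is constant on the relative interior of each cone of $\tilde\cF$. Since $\cT(X)$ is closed it therefore equals the union of those closed cones of $\tilde\cF$ whose relative interior it contains, and this collection is closed under passage to faces; so $\cF_0:=\{\sigma\in\tilde\cF : \sigma\subseteq\cT(X)\}$ is a subfan of $\tilde\cF$ with support $\cT(X)$, which is the first assertion of the theorem. Correspondingly $\bP(\cF_0)\subseteq\bP(\tilde\cF)$ is the open toric subvariety consisting of the orbits $O_\sigma$ with $\sigma\in\cF_0$.

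Third, I check Definition~\ref{def:tropfan}. By the basic compatibility of tropicalization with closures in toric varieties --- the closure of $X$ meets $O_\sigma$ exactly when ${\rm relint}(\sigma)$ meets $\cT(X)$, which is implicit in Section~\ref{Geometric Tropicalization} (cf.\ Lemma~\ref{troptoric} and Remark~\ref{importantremark}) --- the closure $\overline{X}$ of $X$ in the complete toric variety $\bP(\tilde\cF)$ meets only the orbits $O_\sigma$ with $\sigma\in\cF_0$; thus $\overline{X}\subseteq\bP(\cF_0)$, so $X(\cF_0)=\overline{X}$ is proper, and since it meets every orbit of $\bP(\cF_0)$ the multiplication map $\Psi:\TT\times X(\cF_0)\to\bP(\cF_0)$ is surjective. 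For flatness I exploit the flatness built into the Hilbert scheme: pulling back the universal subscheme along $g$ yields $\mathcal{U}\subseteq\bP(\tilde\cF)\times\bP(\cG)$, flat over $\bP(\tilde\cF)$, with fibre $h^{-1}X(\cG)$ over $h\in\TT$. Set $\mathcal{V}:=\mathcal{U}\cap(\bP(\tilde\cF)\times\TT)$; it is open in $\mathcal{U}$, hence flat over $\bP(\tilde\cF)$, its image is exactly $\bP(\cF_0)$ (the fibre over $h\in O_\sigma$ being a translate of ${\rm in}_\sigma(X)$, which is non-empty iff $\sigma\in\cF_0$), so $\mathcal{V}\to\bP(\cF_0)$ is flat, and its generic fibre is the integral scheme $h^{-1}X$; therefore $\mathcal{V}$ is integral and equals the closure of $\{(h,x)\in\TT\times\TT : hx\in X\}$ in $\bP(\cF_0)\times\TT$. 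On the other hand the closed immersion $\TT\times X(\cF_0)\to\bP(\cF_0)\times\TT$, $(t,x)\mapsto(tx,t)$, identifies $\Psi$ with the first projection of $W:=\{(y,t) : t^{-1}y\in X(\cF_0)\}=\overline{\{(y,t)\in\TT\times\TT : t^{-1}y\in X\}}$, and the torus automorphism $(h,x)\mapsto(h,x^{-1})$ of $\bP(\cF_0)\times\TT$ carries $\mathcal{V}$ onto $W$ while commuting with the projection to $\bP(\cF_0)$. Hence $\Psi$ is flat because $\mathcal{V}\to\bP(\cF_0)$ is, and $\cF_0$ is a tropical fan.

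The step requiring genuine care is the passage from the universal family to the graph of $\Psi$ in the third stage: one must know precisely that $\tilde\cF$ is the fan of the normalized orbit closure, so that the fibre of $\mathcal{V}$ over $O_\sigma$ is a translate of the initial degeneration ${\rm in}_\sigma(X)$, and one must use that a scheme flat over an integral base with integral generic fibre is integral --- this is exactly where the Gr\"obner/Hilbert construction enters, as opposed to an arbitrary fan supported on $\cT(X)$, for which $\Psi$ need not be flat. Everything else (the toric-limit computations of the first stage, the combinatorics of the second, properness and surjectivity in the third) is routine; as a consistency check, one recovers part~(3) of Theorem~\ref{BasicTe} from flatness by the fibre-dimension count $\dim\Psi^{-1}(h)=\dim\bigl(X(\cF_0)\cap O_\sigma\bigr)+\dim\sigma=\dim X$ for $h\in O_\sigma$.
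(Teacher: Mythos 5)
The paper gives no proof of Theorem~\ref{Groebnertrop}; it cites \cite[1.7]{Te}, and the paragraph following the statement merely relates the construction to Gr\"obner fans in Cox coordinates. So there is no internal argument to compare against, but your proof is correct and is, as far as I can tell, the intended one: identify the normalized orbit closure with a toric variety $\bP(\tilde\cF)$; use the pullback $\mathcal{U}$ of the universal Hilbert family to read off the initial degenerations and conclude that the condition $1\in{\rm in}_v(I_X)$ is constant on the relative interior of each cone of $\tilde\cF$, so that $\cF_0:=\{\sigma\in\tilde\cF : \sigma\subseteq\cT(X)\}$ is a subfan with support $\cT(X)$; and then verify properness, surjectivity and flatness of $\Psi$, the last by transporting the flat family $\mathcal{V}=\mathcal{U}\cap(\bP(\tilde\cF)\times\TT)$ onto the graph of $\Psi$ via the torus automorphism $(h,x)\mapsto(h,x^{-1})$, using that a scheme flat over an integral base with integral generic fibre is integral. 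This is exactly what makes the Hilbert-scheme construction work where an arbitrary fan supported on $\cT(X)$ need not (cf.\ Example~\ref{redherring}).

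One small tightening: for $v\in{\rm relint}(\sigma)$ the one-parameter limit $\lim_{t\to0}\lambda_v(t)\cdot 1$ in $\bP(\tilde\cF)$ is the single distinguished point $x_\sigma$ of $O_\sigma$, independent of $v$; consequently $S_v$, and hence ${\rm in}_v(I_X)$, is literally constant on ${\rm relint}(\sigma)\cap N$, not merely constant up to a monomial coordinate change. Your weaker statement still carries the argument, but stating the sharper fact makes clearer why $\tilde\cF$ refines the Gr\"obner fan and why restricting the universal family to $\TT$ is legitimate.
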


Now we relate Theorem~\ref{Groebnertrop} to Gr\"obner fans
using the Cox coordinate ring.
Let $m$ be the number of rays of $\cG$ and let
$f:\,\bZ^m\to N$ 
be a linear map that sends the $i$-th basis vector $e_i$ to the generator of the $i$-th ray of $\cG$.
This induces a surjective homomorphism $\TT^m\to \TT$.
Consider the standard embedding $\TT^m\subset\bA^m$ and let $U\subset\bA^m$
be a $\TT^m$-toric open subset obtained by throwing away coordinate subspaces
$\langle e_i\,:\,i\in I\rangle$ for each collection
of boundary divisors $\{D_i\,:\,i\in I\}$ with empty intersection.
Then we have a morphism of toric varieties $\pi:\,U\to\bP(\cG)$
extending the homomorphism $\TT^m\to \TT$. 
Let $R=k[\bA^m]$, the Cox ring of $\bP(\cG)$.
Let $\hat X\subset\bA^m$ be the closure of $\pi^{-1}(X)$ and let $I\subset R$ be its ideal.
The {\em Gr\"obner fan} $\hat\cF\subset\bQ^m$ is, by definition, supported 
on the first octant $\bQ^m_{\ge0}$, and two vectors
$w_1$ and $w_2$ are in the relative interior of the 
same cone if and only if the initial ideals ${\rm in}_{w_1}(I)$ and ${\rm in}_{w_2}(I)$ in $R$ are equal.
It is not hard to prove
(using the fact that initial ideals compute flat limits of one-parameter subgroups)
that $f$ induces the map of fans $\hat\cF\to\tilde\cF$.
Moreover, $\sigma=f^{-1}(f(\sigma))$
for any cone $\sigma\in\hat\cF$.
Since a refinement of a tropical fan is tropical (Theorem~\ref{BasicTe}),
it is possible to define a tropical fan supported on $\cT(X)$ by taking
images of cones in $\hat\cF$ that intersect (= are contained in)  $\cT(X)$.
It is this fan that was ``the tropical variety'' in the earlier papers \cite{BJSST,DFS,SS}. 
Note that this fan can be finer than the fan $\cF$ defined in Theorem~\ref{Groebnertrop}
because initial ideals ${\rm in}_wI$ can have embedded components
supported on the unstable locus $\bA^m \backslash U$.

 \section{Push-Forward of Multiplicities}\label{multsection}

We now turn our attention to the multiplicities attached to
a tropical variety. A tropical fan structure will used
to define multiplicities, but that definition turns out to be
equivalent to the one given in the introduction
(see Corollary \ref{cor:assprimes}) and thus independent of
the particular choice of tropical fan. Our main result
is Theorem \ref{funnyform}, which describes what happens
to the multiplicities under a morphism 
 $\alpha:\,\TT^n\to\TT^d$.
 
We retain the notation from Section~2, in particular, $X$
is a closed subvariety of an algebraic torus $\TT$ and $\cA(X)\subset N_\bQ$ is its
tropicalization.
If $\cF\subset N_\bQ$ is any fan, with corresponding toric variety
$\bP(\cF)$, then $X(\cF)$ denotes the closure of $X$ in $\bP(\cF)$.

\begin{definition}\label{defmult}
Suppose that $\cF$ is a tropical fan for $X$. Let $\Gamma\subset\cF$
be a maximal-dimensional cone (i.e.~of dimension $\dim (X)$)
and let $Z\subset\bP(\cF)$ be the torus orbit that corresponds to
$\Gamma$.
Note that the scheme $X(\cF)\cap Z$ is $0$-dimensional by
Theorem~\ref{BasicTe}.
We define the {\em multiplicity} $m_\Gamma$ 
of the cone $\Gamma$ to be the length of the scheme $X(\cF)\cap Z$.
\end{definition}

\begin{lemma}\label{propmult}
Let $\cF$ be any tropical fan for $X$.
\begin{enumerate}
\item If\/ $\bP(\cF)$ is smooth then $m_\Gamma$ equals the
intersection number $\deg([Z]\cdot [X(\cF)])$.
\item If\/ $\cF'$ is any fan that refines the given tropical fan $\cF$ then $m_{\Gamma'}=m_\Gamma$
for any pair of maximal-dimensional
cones $\Gamma'$ in $\cF'$ and $\Gamma$ in $\cF$ such that
$\,\Gamma'\subseteq \Gamma$.
\end{enumerate}
\end{lemma}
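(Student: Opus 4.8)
The plan is to prove the two claims separately, with part (1) feeding into part (2).

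For part (1), the idea is that when $\bP(\cF)$ is smooth, the toric orbit closure $\overline{Z}$ is a smooth subvariety meeting $X(\cF)$ properly (in dimension $0$, by Theorem~\ref{BasicTe}(3)), so the intersection product $[\overline Z]\cdot[X(\cF)]$ is represented by the scheme-theoretic intersection with multiplicities given by local intersection numbers. The key point is that there is no excess intersection and no embedded contribution: since $\bP(\cF)$ is smooth and $X(\cF)$ is Cohen--Macaulay at each point of $X(\cF)\cap Z$ by Theorem~\ref{BasicTe}(3), and $\overline Z$ is a local complete intersection (a coordinate subspace cut out by a regular sequence of torus-invariant functions), the local intersection multiplicity at each point $p$ equals the length of the local ring $\mathcal{O}_{X(\cF)\cap Z, p}$ — the higher Tor's vanish by the Cohen--Macaulay/regular-sequence condition. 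Summing over the finitely many points $p\in X(\cF)\cap Z$ gives exactly the length of the $0$-dimensional scheme $X(\cF)\cap Z$, which is $m_\Gamma$ by Definition~\ref{defmult}. I would phrase this via the intersection of $X(\cF)$ with the $\mathrm{codim}\,Z$ torus-invariant Cartier divisors whose intersection is $\overline Z$ near these points, invoking that a proper intersection of a Cohen--Macaulay scheme with a regularly-embedded subscheme has intersection multiplicity equal to the length.

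For part (2), I would first reduce to the smooth case. Given a tropical fan $\cF$ and a refinement $\cF'$, by Theorem~\ref{BasicTe}(2) the refinement $\cF'$ is again tropical, and by Theorem~\ref{BasicTe}(1) (resp.\ standard toric resolution) we may further refine both $\cF$ and $\cF'$ to a common smooth tropical fan $\cF''$; it suffices to treat the case where $\cF'$ is a smooth refinement of a smooth $\cF$, since multiplicities are computed cone-by-cone and equality propagates through a chain of refinements. So assume $\bP(\cF)$ and $\bP(\cF')$ are smooth, let $\psi:\bP(\cF')\to\bP(\cF)$ be the induced proper birational toric morphism, and let $\Gamma'\subseteq\Gamma$ with corresponding orbits $Z'\subseteq\bP(\cF')$, $Z\subseteq\bP(\cF)$. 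Since $\Gamma$ and $\Gamma'$ are both maximal-dimensional ($=\dim X$) and $\Gamma'\subseteq\Gamma$, the interior of $\Gamma'$ lies in the interior of $\Gamma$, so $\psi$ maps $\overline{Z'}$ birationally onto $\overline Z$ — indeed $\psi$ restricts to an isomorphism over the generic point of $Z$, and more importantly $\psi_*[\overline{Z'}]=[\overline Z]$ in the Chow group. Now use part (1) on both sides together with the projection formula: $m_{\Gamma'}=\deg\bigl([\overline{Z'}]\cdot[X(\cF')]\bigr)=\deg\bigl([\overline{Z'}]\cdot\psi^*[X(\cF)]\bigr)$, using that $X(\cF')$ is the strict transform of $X(\cF)$ and these agree because $X$ meets the open torus, so $\psi_*[X(\cF')]=[X(\cF)]$; then by the projection formula this equals $\deg\bigl(\psi_*[\overline{Z'}]\cdot[X(\cF)]\bigr)=\deg\bigl([\overline Z]\cdot[X(\cF)]\bigr)=m_\Gamma$. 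The one technical subtlety is justifying $\psi^*[X(\cF)]=[X(\cF')]$: this needs that $\psi$ doesn't introduce excess components of the total transform meeting $\overline{Z'}$, which follows because $X(\cF')$ and $X(\cF)$ both have dimension $\dim X$ and $X(\cF')\to X(\cF)$ is birational with exceptional locus inside the boundary, whose intersection with the relevant strata is controlled by tropicality.

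The main obstacle I expect is the careful bookkeeping in part (2) of the difference between pullback of cycles and strict transform, i.e.\ showing that no spurious boundary components of $\psi^{-1}(X(\cF))$ contribute to the intersection with $\overline{Z'}$. The clean way around this is to avoid $\psi^*$ entirely and argue directly: compute $m_{\Gamma'}$ as a length using Definition~\ref{defmult}, observe that $\psi$ restricts to an isomorphism from a neighborhood of $X(\cF')\cap Z'$ to a neighborhood of $X(\cF)\cap Z$ (because over the interior of $\Gamma$ the morphism of fans $\cF'\to\cF$ is an isomorphism onto its image when restricted appropriately — the cone $\Gamma$ being subdivided near $\Gamma'$ only by cones that do not meet $\cT(X)$ in dimension $\dim X$, again by tropicality and Remark~\ref{importantremark}), and conclude that the two $0$-dimensional schemes $X(\cF')\cap Z'$ and $X(\cF)\cap Z$ are isomorphic, hence have equal length. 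This local argument sidesteps the global Chow-theoretic subtleties and is the route I would write up.
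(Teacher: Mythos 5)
Your part (1) is essentially the paper's argument: $\bP(\cF)$ smooth, $X(\cF)$ Cohen--Macaulay along $X(\cF)\cap Z$ by Theorem~\ref{BasicTe}(3), so the proper intersection with the regularly embedded orbit closure has local multiplicity equal to length; the paper simply cites \cite[Prop.~7.1]{Fu} for this. Your part (2), however, departs from the paper and contains two real gaps. First, the reduction to ``a smooth refinement of a smooth $\cF$'' is circular: the statement must hold for an arbitrary tropical fan $\cF$ (where $m_\Gamma$ is by definition a length and not a priori an intersection number), and to pass from $\cF$ to a smooth refinement $\cF''$ you already need the refinement-invariance you are trying to prove, in the smooth-to-non-smooth direction that your argument does not cover. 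Second, the ``clean'' local argument is false. When $\Gamma'\subsetneq\Gamma$ are both $k$-dimensional, the toric morphism $\psi:\bP(\cF')\to\bP(\cF)$ has positive-dimensional fibers over every point of the orbit $Z$ (the preimage $\psi^{-1}(Z)$ contains orbits of the lower-dimensional cones of $\cF'$ that subdivide $\Gamma$), so $\psi$ is not a local isomorphism at any point of $Z'$, and the induced map $X(\cF')\to X(\cF)$ can contract curves in the boundary to points of $X(\cF)\cap Z$, precisely the phenomenon you would need to rule out. The justification you offer --- that $\Gamma$ is subdivided near $\Gamma'$ ``only by cones that do not meet $\cT(X)$ in dimension $\dim X$'' --- is the opposite of the truth: $\Gamma\subseteq\cT(X)$ and $\cF'$ is supported on $\cT(X)$, so every maximal subcone of $\Gamma$ in $\cF'$ lies entirely inside $\cT(X)$.

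The paper's proof of (2) avoids all of this by working not with $\psi:\bP(\cF')\to\bP(\cF)$ but with the multiplication maps $\Psi:\TT\times X(\cF)\to\bP(\cF)$ and $\Psi':\TT\times X(\cF')\to\bP(\cF')$, which are flat by the very definition of a tropical fan, and with the fact that the resulting square is a fiber square by \cite[Prop.~2.5]{Te}. Flat pullback commutes with proper pushforward across a fiber square (\cite[1.7]{Fu}), yielding $g_*(\Psi'^*[p'])=\Psi^*(f_*[p'])$. Identifying $\Psi^{-1}(p)\simeq H\times(X(\cF)\cap Z)$ and $\Psi'^{-1}(p')\simeq H\times(X(\cF')\cap Z')$ with the same stabilizer subtorus $H$ (because $\dim\Gamma=\dim\Gamma'$) then gives equality of lengths directly, with no smoothness hypothesis, no strict transforms, and no projection formula for the non-flat map $\psi$. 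This is exactly where the flatness built into Definition~\ref{def:tropfan} earns its keep; your projection-formula route would have to confront the excess components of $\psi^{-1}(X(\cF))$ along the exceptional locus, which, as you observe, is the crux of the matter.
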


\begin{proof}
Statement (1) follows from \cite[Proposition~7.1]{Fu}.
Recall that, in general, the intersection number of a proper
intersection can be less than the length of an intersection subscheme.
However, such bad intersections do not happen in our case since $X(\cF)$ is Cohen-Macaulay at any
point of $X(\cF)\cap Z$,  by Theorem~\ref{BasicTe} (3).

To prove (2), let $Z\subset\bP(\cF)$ and $Z'\subset\bP(\cF')$
be the torus orbits that correspond to
$\Gamma$ and $\Gamma'$ respectively.
We consider the commutative diagram
$$
\begin{CD}
\TT\times X(\cF') @>{\Psi'}>> \bP(\cF')\\
@V{g:=(\id,f|_{X(\cF')})}VV  @VVfV \\
\TT\times X(\cF) @>{\Psi}>> \bP(\cF)\\
\end{CD}
$$
where $\Psi$ and $\Psi'$ are multiplication morphisms~\eqref{multmap},
and $f$ is the proper morphism of toric varieties that corresponds to the refinement of
fans. This is a fiber diagram by \cite[Proposition~2.5]{Te}.
Let $p'\in Z'$ be any point.
Since $\Psi$ is flat, we have
\begin{equation}
\label{cycleIdentity}
g_* \bigl({\Psi'}^*([p']) \bigr) \,\,\,= \,\,\, \Psi^* \bigl( f_*([p']) \bigr),
\end{equation}
where we use push-forward and flat pull-back  of cycles as in
\cite[1.4 and 1.7]{Fu}.
Let $p=f(p')$, and let $H$ and $H'$ denote the
stabilizers  in $\TT$ of $p$ and $p'$ respectively.
Since $\bP(\cF')$ and $\bP(\cF)$ are normal,
the subgroups $H$ and $H'$  are subtori of $\TT$.
Since $\Gamma$ and $\Gamma'$ have the same dimension,
namely ${\rm dim}(X)$, we conclude that $H= H'$.

It follows from the $\TT$-invariance of the multiplication map
$\Psi$ that the scheme-theoretic fiber $(\Psi)^{-1}(p)$
is isomorphic to $\,H\times (X(\cF)\cap Z)$. This product
is a torus times a zero-dimensional scheme.
Similarly,
the scheme-theoretic fiber $(\Psi')^{-1}(p')$
is isomorphic to $\,H\times (X(\cF')\cap Z')$, with
the same torus $H$.
The identity (\ref{cycleIdentity}) implies that
the length $m_\Gamma$ of $X(\cF)\cap Z$ coincides with
the length $m_{\Gamma'}$ of $X(\cF')\cap Z'$. \
\end{proof}

\begin{definition}[Fulton--Sturmfels \cite{FS}]\label{defbalance}
Given a rational polyhedral cone $\sigma\subset N_\bQ$,
we write $N_\sigma$ for the sublattice of $N$ generated by $\sigma\cap N$.
For a facet $\tau$ of $\sigma$ 
let $n_{\sigma,\tau}$ be any representative in $\sigma$ for the generator of the $1$-dimensional
lattice $N_\sigma/N_\tau$.
A multiplicity function $m_\sigma$ 
defined on $k$-dimensional cones of some fan $\cF$ 
is said to satisfy
the {\em balancing condition} if, for any cone $\tau$ of $\cF$ of dimension~$k-1$,
we have 
$$\sum_{\sigma\supset\tau}m_\sigma \cdot n_{\sigma,\tau}\in N_\tau$$
Here the sum over all $k$-dimensional cones $\sigma$ in $\cF$ that contain $\tau$.
\end{definition}

\begin{corollary}\label{balancingcondition}
If $\cF$ is a tropical fan for $X$ whose
toric variety $\bP(\cF)$ is smooth then
 the multiplicity function $m_\sigma$
in Definition \ref{defmult}
satisfies the balancing condition.
\end{corollary}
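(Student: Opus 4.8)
The plan is to identify the balancing condition of Definition~\ref{defbalance} with the defining axiom of Minkowski weights from Fulton--Sturmfels~\cite{FS}, and then to observe that the function $m_\sigma$ of Definition~\ref{defmult} is precisely the Minkowski weight attached to the cycle class $[X(\cF)]$ in the Chow ring of the smooth complete toric variety $\bP(\cF)$. Concretely: when $\bP(\cF)$ is smooth, part~(1) of Lemma~\ref{propmult} says $m_\Gamma = \deg([Z]\cdot[X(\cF)])$ for each maximal cone $\Gamma$, where $[Z]$ is the class of the corresponding torus-orbit closure. By Fulton--Sturmfels, assignments $\Gamma \mapsto \deg([Z_\Gamma]\cdot c)$, for a fixed class $c \in A_{\dim X}(\bP(\cF))$ ranging over the maximal cones $\Gamma$ of $\cF$, are exactly the Minkowski weights of the appropriate codimension, and these are characterized by the balancing relation. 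So the statement follows once I match the two bookkeeping conventions.

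The one genuine point to check is that the ``balancing condition'' as written in Definition~\ref{defbalance}, with the lattice vectors $n_{\sigma,\tau}$ spanning $N_\sigma/N_\tau$, coincides with the linear relation in~\cite[\S2]{FS} defining Minkowski weights. First I would recall the Fulton--Sturmfels setup: for a codimension-$k$ class $c$, the weight of a codimension-$k$ cone $\sigma$ is $c(\sigma) := \deg(c \cdot [V(\sigma)])$, and the condition is that for every codimension-$(k+1)$ cone $\tau$ one has $\sum_{\sigma \supset \tau} c(\sigma)\, \langle u_{\sigma,\tau}, \cdot\rangle$ lying in the appropriate quotient lattice of $N/N_\tau$ -- i.e. the sum of the primitive normal vectors weighted by $c(\sigma)$ vanishes in $N/N_\tau$. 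In our dimension-indexed language a codimension-$k$ cone is a maximal ($\dim X$-dimensional) cone $\Gamma$ and a codimension-$(k+1)$ cone is a $\tau$ of dimension $\dim X - 1$; the vector $n_{\Gamma,\tau} \bmod N_\tau$ is the primitive generator of $N_\Gamma/N_\tau$, which is exactly the $u_{\Gamma,\tau}$ of~\cite{FS}. Hence the two conditions are literally the same relation, and the corollary is immediate.

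The main obstacle -- and it is a mild one -- is the translation between the codimension grading used in~\cite{FS} and the dimension grading used here, together with the fact that~\cite{FS} formulates Minkowski weights in $A^*(\bP(\cF))$ for $\bP(\cF)$ complete, whereas a tropical fan $\cF$ need not be complete. This is handled by Theorem~\ref{BasicTe}(1): the support of $\cF$ is $\cT(X)$, which is pure of dimension $\dim X$, so one may refine and then complete $\cF$ to a fan $\cF'$ without changing the weights on maximal cones (Lemma~\ref{propmult}(2)), or equivalently one notes that the relevant intersection products $[Z]\cdot[X(\cF)]$ only see a neighborhood of $X(\cF)\cap Z$ and so are insensitive to completing the fan far from $\cT(X)$. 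Alternatively, since $[X(\cF)]$ has dimension equal to that of the ambient tropical set, one can invoke directly the fact from~\cite{FS} that the map ``cycle class $\mapsto$ its vector of intersection degrees with orbit closures'' lands in the group of balanced weights.

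\begin{proof}
If $\bP(\cF)$ is smooth, Lemma~\ref{propmult}(1) gives $m_\Gamma = \deg([Z]\cdot[X(\cF)])$ for every $\dim(X)$-dimensional cone $\Gamma$, where $Z$ is the corresponding torus orbit. After refining $\cF$ (which changes neither smoothness nor, by Lemma~\ref{propmult}(2), the numbers $m_\Gamma$) we may assume $\cF$ is a subfan of a smooth complete fan; since $\cA(X)$ is pure of dimension $\dim X$ by Theorem~\ref{BasicTe}(1), the class $[X(\cF)]\in A_{\dim X}(\bP(\cF))$ only meets orbit closures $[Z]$ with $\dim Z = \dim \bP(\cF)-\dim X$, so $\Gamma\mapsto \deg([Z]\cdot[X(\cF)])$ is the Minkowski weight of codimension $\dim\bP(\cF)-\dim X$ associated to $[X(\cF)]$ in the sense of Fulton--Sturmfels~\cite{FS}. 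By \cite[Theorem~2.1 and the discussion of \S2]{FS}, every such Minkowski weight satisfies, for each cone $\tau$ of dimension $\dim(X)-1$, the relation $\sum_{\sigma\supset\tau} m_\sigma\, u_{\sigma,\tau}\equiv 0$ in $N/N_\tau$, where $u_{\sigma,\tau}$ is the primitive generator of $N_\sigma/N_\tau$; equivalently $\sum_{\sigma\supset\tau} m_\sigma\, n_{\sigma,\tau}\in N_\tau$. This is exactly the balancing condition of Definition~\ref{defbalance}.
\end{proof}
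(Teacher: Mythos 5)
Your proof takes essentially the same route as the paper: extend to a smooth complete fan, recognize the numbers $m_\sigma$ as the Minkowski weight of the class $[X(\cF)]$ via Lemma~\ref{propmult}(1), and invoke \cite[Theorem~2.1]{FS}. The extra paragraphs unpacking the dictionary between Definition~\ref{defbalance} and the Fulton--Sturmfels relation are a genuine addition — the paper takes that translation for granted — and they are correct.

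One point in the final write-up is slightly off target. To apply \cite{FS} one needs the weight, extended by zero to all codimension-$(\dim X)$ cones of the ambient complete fan $\cG$, to agree with the Minkowski weight of $[X(\cF)]$; for that you must argue that $X(\cF)$ misses the orbit closures coming from cones of $\cG$ that lie outside $\cF$. You attribute this to $\cA(X)$ being pure of dimension $\dim X$, but the relevant fact is that $X(\cF)$ is \emph{proper}: being already closed in the open subvariety $\bP(\cF)\subset\bP(\cG)$, it stays away from the new strata entirely. (Pure-dimensionality alone would not preclude $X(\cF)$ from, say, having a boundary limit point on a stratum outside $\cF$.) The paper's proof makes exactly this properness argument explicit; with that correction your argument coincides with it.
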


\begin{proof}
Let $k=\dim X$. Since we wish to cite \cite{FS}, we fix
a complete strictly simplicial fan $\cG$ that contains $\cF$ as a subfan.
Since $X(\cF)$ is proper, $X(\cF)$
does not intersect toric strata parametrized by cones in $\cG \backslash \cF$. 
We set $m_\sigma = 0$ for $k$-dimensional cones $\sigma$ of $\cG$
that are not in $\cF$.
Lemma~\ref{propmult} (1) implies that
the multiplicities $m_\sigma$ determine a class in the
{\em operational Chow cohomology}
$A^k(\bP(\cG))$, representing the intersection with $X(\cF)$.
Therefore $m_\sigma$ satisfies the balancing condition by \cite[Theorem 2.1]{FS}. 
\end{proof}

\begin{remark}
From this perspective, a tropical variety is the same as an
operational Chow cohomology class on an appropriate toric variety. 
See also \cite[Remark 1.7]{KP}.
\end{remark}

\begin{lemma}\label{nicelemma}
Suppose $p\in\cT(X)$ is a regular point (see Introduction) and suppose $\cF$ is 
a fan supported on $\cT(X)$ and endowed with a multiplicity function $m_\sigma$ that
satisfies the balancing condition.
Then $m_\sigma$ is constant on all cones $\sigma$ containing $p$.
\end{lemma}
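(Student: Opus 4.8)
The plan is to reduce the statement to the case where $p$ lies in the relative interior of a codimension-one cone $\tau$ of $\cF$, and then read off the conclusion directly from the balancing condition. First I would observe that, since $p$ is a regular point of $\cT(X)$, there is a linear subspace $\bL_p \subset N_\bQ$ of dimension $k := \dim \cT(X)$ such that $\cT(X)$ coincides with $\bL_p$ in a neighbourhood of $p$. Refining $\cF$ if necessary (which does not affect the hypothesis, since a multiplicity function that balances on $\cF$ balances on any refinement), I may assume $p$ lies in the relative interior of some cone $\gamma$ of $\cF$, and every cone of $\cF$ meeting a small enough neighbourhood $U$ of $p$ contains $\gamma$. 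All such cones lie in $\bL_p$. The $k$-dimensional cones of $\cF$ meeting $U$ thus form a fan structure, inside $\bL_p$, on the neighbourhood $U \cap \bL_p$; their common face $\gamma$ has some dimension $j \le k$.

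Next I would argue by descending induction on $\dim \gamma$, the case $\dim\gamma = k$ being vacuous. Suppose $\dim\gamma = j < k$ and that the claim holds whenever the relevant minimal face has dimension $> j$; it suffices to show that any two $k$-cones $\sigma_1, \sigma_2 \supset \gamma$ that share a common facet $\tau$ (with $\tau \supsetneq \gamma$ or not) have $m_{\sigma_1} = m_{\sigma_2}$, since the $k$-cones containing $\gamma$ are connected through such facet-adjacencies within the fan near $p$. Fix such a facet $\tau$ of dimension $k-1$. Because we are working inside the $k$-dimensional space $\bL_p$ and $\cT(X)$ is locally all of $\bL_p$, exactly two $k$-cones of $\cF$ contain $\tau$, namely $\sigma_1$ and $\sigma_2$, and they lie on opposite sides of the hyperplane $\mathrm{span}(\tau)$ inside $\bL_p$. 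The balancing condition at $\tau$ reads $m_{\sigma_1} n_{\sigma_1,\tau} + m_{\sigma_2} n_{\sigma_2,\tau} \in N_\tau$. Since $n_{\sigma_1,\tau}$ and $n_{\sigma_2,\tau}$ are generators of $N_{\sigma_i}/N_\tau$ lying on opposite sides of $\mathrm{span}(\tau)$, and $N_{\sigma_1} = N_{\sigma_2}$ (both equal $\bL_p \cap N$ locally, as $\sigma_1 \cup \sigma_2$ spans $\bL_p$), we get $n_{\sigma_1,\tau} \equiv -n_{\sigma_2,\tau} \pmod{N_\tau}$ up to the relevant lattice indices; a short computation then forces $m_{\sigma_1} = m_{\sigma_2}$.

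Finally I would assemble these equalities: the dual graph of the $k$-cones of $\cF$ near $p$, with edges given by shared facets, is connected — indeed it is the dual graph of a fan subdividing a neighbourhood of $p$ in the manifold $\bL_p$, and such a subdivision is connected in codimension one. Walking along this graph and using the facet-by-facet equality above, every $m_\sigma$ with $\sigma \ni p$ equals a single common value. The main obstacle I expect is the bookkeeping around the lattices $N_\sigma$, $N_\tau$ and the integer coefficients $n_{\sigma,\tau}$: one must check that, locally near a regular point, the ambient lattice of every maximal cone is the same lattice $\bL_p \cap N$, so that the two terms in the balancing sum genuinely cancel and no stray index obstructs the conclusion $m_{\sigma_1} = m_{\sigma_2}$. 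Once that local lattice identification is pinned down, the rest is the elementary connectivity argument.
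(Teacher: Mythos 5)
Your proof is correct, and it takes the same initial reduction as the paper (restrict to $\bL_p$ and look at the star of the minimal cone $\gamma$ containing $p$) but then proves the resulting statement by hand rather than citing it. The paper reduces to: a balanced multiplicity function on a complete fan is constant, and then invokes the fact that the operational Chow cohomology group $A^0(\bP(\cG))$ of a complete toric variety has rank one, referencing Fulton--Sturmfels \cite{FS}. You instead prove that fact directly: you show that when $\cT(X)$ looks like the linear space $\bL_p$ near $p$, every facet $\tau$ near $p$ lies in exactly two top-dimensional cones $\sigma_1,\sigma_2$, that $N_{\sigma_1}=N_{\sigma_2}=\bL_p\cap N$ so $n_{\sigma_1,\tau}\equiv -n_{\sigma_2,\tau}\pmod{N_\tau}$, hence balancing forces $m_{\sigma_1}=m_{\sigma_2}$; then you walk across facets using connectivity of the dual graph. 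This is more self-contained and avoids the appeal to operational Chow cohomology, at the cost of the bookkeeping you flag. One small point you were uncertain about is actually clean: since any full-dimensional rational cone inside $\bL_p$ contains a lattice basis of $\bL_p\cap N$, the lattices $N_{\sigma_1}$ and $N_{\sigma_2}$ really are equal, the quotient by $N_\tau$ really is $\ZZ$, and the two normal representatives map to $+1$ and $-1$ with no index correction, so the balancing relation gives $m_{\sigma_1}-m_{\sigma_2}=0$ directly. The phrase ``with $\tau\supsetneq\gamma$ or not'' can be dropped: the facets you cross in the connectivity argument are exactly the facets in the star of $\gamma$, so they automatically contain $\gamma$ and live near $p$.
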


\begin{proof}
Restricting to $\bL_p$ and taking the link $\cG$ in $\cF$ of the cone 
containing~$p$
in its relative interior, we reduce to the following statement:
If $\cG$ is a complete fan and its top-dimensional cones are 
endowed with multiplicities that satisfy the balancing condition
then these multiplicities are constant. This holds because the 
 operational Chow cohomology group $A^0$ of a
complete toric variety $\bP(\cG)$  has rank one.
\end{proof}

\begin{definition} \label{defmult2}
The {\em multiplicity} of a regular point $p\in\cT(X)$ as the multiplicity
$m_\sigma$ of any top-dimensional cone $\sigma$ of any tropical fan
that contains $p$ in its closure. 
\end{definition}

\begin{corollary}
The multiplicity is a well-defined locally constant function on the open subset of regular points of 
$\cT(X)$. For any  (not necessarily tropical) fan $\cF$ supported on $\cT(X)$,
the induced multiplicity function satisfies the balancing condition.
\end{corollary}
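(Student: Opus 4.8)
The plan is to reduce everything to two facts already in hand: balancing of multiplicities for \emph{smooth} tropical fans (Corollary~\ref{balancingcondition}) and invariance of multiplicities under refinement (Lemma~\ref{propmult}(2)); the rest is bookkeeping with fans and sublattices. First I would record an elementary observation. Since $\cT(X)$ is pure of dimension $d:=\dim X$, any fan $\cF$ with $|\cF|=\cT(X)$ is pure of dimension $d$: a maximal cone $\sigma$ of $\cF$ meets every other cone only along a common face, so near a point of $\mathrm{relint}(\sigma)$ the set $\cT(X)$ agrees with $\mathrm{span}(\sigma)$, which forces $\dim\sigma=d$ and, simultaneously, exhibits every $v\in\mathrm{relint}(\sigma)$ as a regular point with $\bL_v=\mathrm{span}(\sigma)$. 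Consequently, once the multiplicity function is known to be well defined and locally constant on regular points, it is automatically constant on the convex (hence connected) set $\mathrm{relint}(\sigma)$, and that constant is by definition the value the induced function assigns to $\sigma$. So it remains to prove (a) that the multiplicity of a regular point is independent of the chosen tropical fan and cone and is locally constant, and (b) the balancing identity for the resulting $m_\sigma$ on an arbitrary fan supported on $\cT(X)$.

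For (a): given a regular point $p$, pick any tropical fan $\cF$ with $p$ in the closure of a $d$-dimensional cone, and refine it to a smooth tropical fan $\cF''$ — first apply Theorem~\ref{BasicTe}(1) to get a tropical refinement, then take a smooth subdivision of its fan (an equivariant resolution of the toric variety), which stays tropical by Theorem~\ref{BasicTe}(2). By Corollary~\ref{balancingcondition} the multiplicities of $\cF''$ are balanced, so Lemma~\ref{nicelemma} applied at $p$ shows that all $d$-dimensional cones of $\cF''$ containing $p$ carry one and the same multiplicity; since every $d$-dimensional cone $\sigma$ of $\cF$ through $p$ contains a $d$-dimensional cone of $\cF''$ whose closure contains $p$, Lemma~\ref{propmult}(2) transfers this constancy to $\cF$, giving independence of the chosen cone within a fixed tropical fan. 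For independence of the fan, pass from two tropical fans to their common refinement and then (Theorem~\ref{BasicTe}(1)) to a tropical fan $\cG$ refining both, and compare multiplicities through $\cG$ by Lemma~\ref{propmult}(2). Finally, near the regular point $p$ the set $\cT(X)$ is a linear space, so a whole neighbourhood of $p$ consists of regular points, each lying in the closure of a $d$-dimensional cone of a fixed tropical refinement; by what was just shown these cones all carry the common value, so the multiplicity is locally constant.

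For (b), fix an arbitrary fan $\cF$ supported on $\cT(X)$ and a codimension-one cone $\tau$ of $\cF$, and refine $\cF$ to a smooth tropical fan $\cF''$ as above. Choose a $(d-1)$-dimensional cone $\tau_j$ of $\cF''$ lying in $\mathrm{relint}(\tau)$; then the $d$-dimensional cones of $\cF''$ adjacent to $\tau_j$ are exactly the pieces, one for each $\sigma\in\cF$ with $\sigma\supset\tau$, of those $\sigma$ touching $\tau_j$. Using the identity $N_C=\mathrm{span}(C)_\QQ\cap N$, valid for a rational cone $C$ of full dimension in its span, one gets $N_{\tau_j}=N_\tau$ and $N_{\sigma''}=N_\sigma$ for every such piece $\sigma''\subseteq\sigma$; hence $N_{\sigma''}/N_{\tau_j}=N_\sigma/N_\tau$, the normal vector $n_{\sigma'',\tau_j}$ of Definition~\ref{defbalance} can be chosen congruent to $n_{\sigma,\tau}$ modulo $N_\tau$, and $m_{\sigma''}=m_\sigma$ because the multiplicity is constant on $\mathrm{relint}(\sigma)\supseteq\mathrm{relint}(\sigma'')$. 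Therefore the balancing sum $\sum_{\sigma\supset\tau}m_\sigma\,n_{\sigma,\tau}$ for $\cF$ at $\tau$ is congruent modulo $N_\tau$ to the balancing sum for $\cF''$ at $\tau_j$, which lies in $N_{\tau_j}=N_\tau$ by Corollary~\ref{balancingcondition}; so the former lies in $N_\tau$ as well, which is the balancing condition for $\cF$.

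I do not expect a single deep obstacle: the real content sits in Corollary~\ref{balancingcondition} and Lemma~\ref{propmult}(2), both already proved. The delicate point is the transport of the balancing condition from the smooth refinement $\cF''$ down to the coarse fan $\cF$ in step (b) — one must check carefully that the sublattices $N_\sigma$ and $N_\tau$ are genuinely unaffected by the subdivision (this is the displayed equality $N_C=\mathrm{span}(C)_\QQ\cap N$ for cones of full dimension in their span) and that the normal vectors $n_{\sigma,\tau}$ can be chosen compatibly, so that the two balancing sums really do agree modulo $N_\tau$. A secondary point requiring a line of justification is the purity claim in the first paragraph, that \emph{every} fan supported on the pure-dimensional set $\cT(X)$ is itself pure of dimension $d$, which is what lets one even speak of the induced multiplicity function on an arbitrary such fan.
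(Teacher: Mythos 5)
Your proof is correct and takes exactly the route the paper indicates: the one-line proof in the paper cites Lemma~\ref{nicelemma}, Corollary~\ref{balancingcondition}, Lemma~\ref{propmult}, and the existence of common (strictly simplicial) tropical refinements, and your argument is precisely a careful unfolding of how those four ingredients combine. In particular, your observations about purity, about $N_C = \operatorname{span}(C)_\QQ \cap N$ for cones of full dimension in their span, and about transporting the balancing sum from the smooth tropical refinement $\cF''$ back to the coarse fan $\cF$ supply the bookkeeping the paper leaves to the reader, but the underlying strategy is the same.
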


\begin{proof}
This follows from Lemma~\ref{nicelemma}, Corollary~\ref{balancingcondition}, Lemma~\ref{propmult},
and the fact that any two tropical fans have a common strictly simplicial refinement.
\end{proof}

The balancing condition implies that 
many complete fans can play the role of the
Gr\"obner fans in Theorem \ref{Groebnertrop}
when computing tropical varieties set-theoretically:

\begin{proposition}\label{skeleton}
Let $\cG\subset N_\bQ$ be any complete fan such that $X(\cG)$ does not intersect
toric strata of $\bP(\cG)$ of codimension greater than $k=\dim X$.
Then $\cT(X)$ equals the union of all $k$-dimensional
cones in $\cG$ whose toric strata intersect~$X(\cG)$.
\end{proposition}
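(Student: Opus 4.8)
The plan is to deduce Proposition~\ref{skeleton} from Theorem~\ref{Groebnertrop} together with the balancing condition. First I would fix a tropical fan $\cF$ for $X$; by Theorem~\ref{Groebnertrop} such a fan exists, and by Theorem~\ref{BasicTe}(1) its support is exactly $\cT(X)$, which has pure dimension $k$. After passing to a common refinement, I may assume that $\cF$ and $\cG$ share a common refinement $\cF'$; since a refinement of a tropical fan is tropical (Theorem~\ref{BasicTe}(2)), $\cF'$ is a tropical fan for $X$ as well, and its $k$-dimensional cones carry multiplicities $m_{\Gamma'}$ as in Definition~\ref{defmult}, equal to the length of $X(\cF')\cap Z'$. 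The containment ``$\subseteq$'' is then the easy direction: if $\Gamma'$ is a $k$-dimensional cone of $\cF'$ contained in $\cT(X)$, its multiplicity is positive (the scheme $X(\cF')\cap Z'$ is nonempty because $\Gamma'\subset\cT(X)=\cA(X)$, using Lemma~\ref{troptoric} or directly Theorem~\ref{BasicTe}), so the corresponding toric stratum of $\bP(\cF')$ meets $X(\cF')$; projecting down to $\bP(\cG)$ shows the stratum of $\cG$ containing it meets $X(\cG)$.

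For the reverse containment ``$\supseteq$'', suppose $\Gamma$ is a $k$-dimensional cone of $\cG$ whose toric stratum $Z$ meets $X(\cG)$, but $\Gamma\not\subset\cT(X)$. By the hypothesis on $\cG$, the variety $X(\cG)$ avoids all strata of codimension $>k$, so $X(\cG)\cap Z$ is zero-dimensional, and we may define a weight function $w_\Gamma$ on the $k$-dimensional cones of $\cG$ by declaring $w_\Gamma$ to be the intersection number (or length) of $X(\cG)\cap Z_\Gamma$. The key point is that this weight function satisfies the balancing condition: this is exactly the Minkowski-weight/intersection-theoretic content invoked in the proof of Corollary~\ref{balancingcondition}, applied to $\cG$ in place of the tropical fan (after a strictly simplicial refinement, using \cite[Theorem~2.1]{FS}), since the codimension hypothesis guarantees the class $[X(\cG)]\in A^k(\bP(\cG))$ is represented by these weights. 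On the other hand, $w_\Gamma$ agrees with the tropical multiplicity on cones inside $\cT(X)$ and vanishes on cones meeting $\cT(X)$ only along lower-dimensional faces. The support of a balanced weight function with nonnegative (in fact positive, where nonzero) values is a union of $k$-dimensional cones with no ``free'' facets; I then argue that if $w_\Gamma>0$ on some cone outside $\cT(X)$, following the balancing condition forces a chain of adjacent $k$-cones with positive weight that can never terminate inside a finite complete fan — contradiction. Concretely, at any facet $\tau$ of such a cone $\Gamma$, the relation $\sum_{\sigma\supset\tau}w_\sigma\, n_{\sigma,\tau}\in N_\tau$ forces at least one other $k$-cone $\sigma\supset\tau$ with $w_\sigma>0$, and one shows this propagation reaches a cone that is genuinely in $\cT(X)$, whence $\Gamma$ itself lies in $\cT(X)$ by connectivity of the positive-weight locus within a balanced fan.

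The main obstacle I anticipate is the last step: showing that the support of the balanced weight function $w$ cannot stick out beyond $\cT(X)$. The cleanest route is probably not the combinatorial propagation argument sketched above but rather a direct comparison of classes in $A^k(\bP(\cG))$ after a simplicial refinement: both $w$ and the tropical multiplicity $m$ represent the same class (namely intersection with $X(\cG)$, resp.\ with $X(\cF')$ — and these closures coincide away from the relevant strata), so their difference is a balanced weight function representing the zero class; one then needs that a nonnegative balanced weight representing $0$ in $A^k$ must vanish identically, which again reduces, via taking links at codimension-one faces as in Lemma~\ref{nicelemma}, to the rank-one statement for $A^0$ of a complete toric variety. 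I would spell out this reduction carefully, since it is where the codimension hypothesis on $\cG$ is genuinely used — without it the extra high-codimension intersections would spoil the Minkowski-weight structure and the argument breaks down exactly as warned in Remark~\ref{importantremark}.
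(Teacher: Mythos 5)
Your high-level instinct is right — the balancing condition is the engine — but both of the concrete arguments you sketch have genuine gaps, and neither matches the cleaner route the paper takes.

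Your paragraph~2 ``propagation'' argument does not work. The Minkowski weight $w$ on the complete fan $\cG$ is indeed balanced (after a simplicial refinement), but balancing on a complete fan imposes no constraint on the \emph{support}: every codimension-$k$ cycle on $\bP(\cG)$ produces a balanced weight, supported wherever that cycle meets the strata. So the fact that balancing at a facet $\tau$ forces some neighboring cone to have positive weight gives no contradiction — the positive weight can happily propagate around the complete fan forever, and the claim ``the propagation reaches a cone genuinely in $\cT(X)$, whence $\Gamma$ lies in $\cT(X)$'' is circular: being connected to $\cT(X)$ by positive-weight cones is not the same as being \emph{in} $\cT(X)$, which is exactly what you are trying to prove. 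Your paragraph~3 ``cleaner route'' is closer, but the crucial assertion that $w$ and $m$ represent the same class is left unjustified by the vague remark ``these closures coincide away from the relevant strata.'' The actual reason, which you would need to isolate, is that a tropical fan $\cF'$ has $X(\cF')$ \emph{proper}; if $\cF'$ sits as a subfan of a simplicial refinement $\cG'$ of $\cG$, this properness forces $X(\cG') = X(\cF') \subset \bP(\cF')$, hence $X(\cG')$ misses \emph{every} stratum of $\bP(\cG')$ indexed by a cone outside $\cF'$, and so $w = m$ on the nose. Even granting that, you still have not derived $\Gamma \subset \cT(X)$ for a cone $\Gamma$ of the original fan $\cG$: you need the \emph{within-$\Gamma$} balancing argument, which your writeup never reaches.

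The paper's proof is more direct and avoids the Minkowski-weight bookkeeping altogether. It takes a tropical fan $\tilde\cF$ supported on $\cT(X)$ that maps to $\cG$, and uses the stronger geometric input from \cite[Lemma~2.2]{Te}: not only is $\cT(X)$ contained in the union of cones of $\cG$ whose strata meet $X(\cG)$, but $\cT(X)$ meets the \emph{relative interior} of every such cone. Fixing a $k$-dimensional cone $C$ of $\cG$ meeting $X(\cG)$, the paper considers the subfan $\tilde\cF_C$ of cones of $\tilde\cF$ contained in $C$, and shows $|\tilde\cF_C| = C$: if not, some facet $\tau$ of a $k$-cone of $\tilde\cF_C$ lies in the interior of $C$ yet on the boundary of $|\tilde\cF_C|$, and then the balancing condition for the \emph{positive} tropical multiplicities of $\tilde\cF$ fails at $\tau$ — because the map-of-fans condition confines all cones of $\tilde\cF$ adjacent to $\tau$ to lie inside $C$, so there is nothing on the other side of $\tau$ to balance against. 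This is the local argument your proposal is missing: balancing is applied to the tropical fan's own (strictly positive, $k$-dimensionally supported) multiplicities, restricted to a single cone $C$, not to a globally defined Minkowski weight on a complete fan.
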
 

\begin{proof}
Let $\cF$ denote the subfan of $\cG$ consisting of all
cones whose toric strata intersect~$X(\cG)$. By
Theorem~\ref{BasicTe}, there exists a
tropical fan $\tilde\cF$ for $X$ which admits a map of fans $\tilde\cF\to\cG$.
By \cite[Lemma 2.2]{Te}, the support $|\tilde\cF| = \cT(X)$ is contained in $|\cF|$ and intersects the
relative interior of every cone in $\cF$. Let $C$ be a $k$-dimensional cone
in $\cF$ and let $\tilde\cF_C$ be the set of cones in $\tilde\cF$ contained in $C$.
Note that $\tilde\cF_C$ is a $k$-dimensional fan. We claim that its support
 $|\tilde\cF_C|$ equals $C$. Arguing by contradiction, suppose that it is not.
Then there exists a $k$-dimensional cone $\sigma$ in $\tilde\cF_C$ and 
a facet  $\tau$ of $\sigma$  such that
 $\tau$ intersects the interior of $C$
and lies in the boundary of $|\tilde \cF_C|$.
But this contradicts the balancing condition, and
we conclude  $|\cF| = \cT(X)$.
\end{proof}

\begin{example}\label{redherring}
The fan $\cF$ in the proof of Proposition~\ref{skeleton}
is not necessarily tropical. We construct a variety~$X \subset \TT$
that admits a non-tropical fan  $\cF$ supported on $\cT(X)$.
Consider a projective variety $Y \subset \bP^{r-1} $
which is not locally Cohen-Macaulay  at some point $p \in Y$.
Then $k:=\dim X \geq 2$.
Let $H_1,\ldots,H_k\subset\bP^{r-1}$ be general hyperplanes passing through $p$
and let $H_{k+1},\ldots,H_r\subset\bP^{r-1}$ be general hyperplanes.
The intersection of $\cap_{i\in I}H_i$ with $ Y$ is non-empty 
if $|I|=k$ and empty if $|I|>k$.
Then $X:=Y \backslash \{H_1,\ldots,H_r\}$ is a very affine variety with intrinsic
torus $\TT=\bP^{r-1}\backslash \{H_1,\ldots,H_r\}$.
Let $\cF$ be a fan obtained by taking the $k$-skeleton of the fan of $\bP^{r-1}$.
By Proposition~\ref{skeleton}, $\cF$ is supported on $\cT(X)$.
If $\cF$ is a tropical fan then $Y$ has to be Cohen-Macaulay at $p$
by Theorem~\ref{BasicTe} (3). Therefore $\cF$ is not tropical.
\end{example}

\begin{remark}
This example has to be compared with \cite[Theorem 1.9]{HKT}
which concerns the (Mori-theoretically significant) class of {\em h\"ubsch}
very affine varieties.
 If $X$ is h\"ubsch then any fan supported on $\cT(X)$
is tropical, and there exists a coarsest fan
supported on $\cT(X)$.
The latter property also fails in general, see Example~\ref{nonconvexex}.
\end{remark}

We now explain how multiplicities
behave under a map of tropical varieties
as discussed in the Introduction.
Our setting is that of the commutative diagram \eqref{CommDiag},
where vertical arrows are closed embeddings in algebraic tori,
$f : X \rightarrow Y $ is dominant, and $\alpha$ is the homomorphism of tori
specified by a $\bZ$-linear map ${\bf A}:\, N_\bQ  \to \tilde N_\bQ$.
By Proposition~\ref{surj}, we have $\,A(\cA(X))=\cA(Y)$,
which is the identity in (\ref{Talpha=AT}).
To prove Theorem~\ref{funnyform}, it suffices to prove the following more 
technical statement.

\begin{theorem}\label{mainmult}
Let $\cF$ be a fan in $N_\bQ$ whose support equals $\cA(X)$,
and let $\tilde \cF$ be a fan in $\tilde N_\bQ$ whose support equals $\cA(Y)$.
We further assume the following conditions:
\begin{itemize}
\item The map $f$ is generically finite of degree $\delta$ (hence
$\dim\tilde\cF=\dim\cF$).
\item The fan $\cF$ is tropical for $X$ and the fan $\tilde \cF$ is tropical for $Y$.
\item For any cone $\Gamma$ of $\cF$, the image ${\bf A}(\Gamma)$ is a
union of cones of $\tilde \cF$.
\end{itemize}
Then the multiplicity $m_\Pi$ of any maximal-dimensional cone
$\Pi$ of $\tilde \cF$ satisfies
\begin{equation}\label{formulaformult}
m_\Pi \quad = \,\quad \frac{1}{\delta} \cdot \!\! \sum_{\Gamma \in \cF : {\bf A}(\Gamma) \supseteq \Pi} \!\!
m_\Gamma\cdot\ind(\Gamma,\Pi),
\end{equation}
 where
$\,\ind(\Gamma,\Pi)\,$ denotes the index of the sublattice of $\tilde N$ generated by
the semigroup $\,{\bf A}(\Gamma\cap N)$ inside the
sublattice generated by the semigroup $\Pi\cap \tilde N$.
\end{theorem}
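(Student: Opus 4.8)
The plan is to compute $m_\Pi$ geometrically as the length of $Y(\tilde\cF)\cap Z_\Pi$, where $Z_\Pi$ is the torus orbit of $\bP(\tilde\cF)$ attached to $\Pi$, and to relate this to the lengths $m_\Gamma = \operatorname{length}(X(\cF)\cap Z_\Gamma)$ by choosing a common refinement. More precisely, after subdividing I may assume $\cF$ and $\tilde\cF$ are strictly simplicial, that $\bP(\cF)$ and $\bP(\tilde\cF)$ are smooth, and (using the third hypothesis) that there is a map of fans $\cF\to\tilde\cF$ induced by $\mbA$, hence a toric morphism $\beta:\bP(\cF)\to\bP(\tilde\cF)$ restricting to $\alpha$ on $\TT$. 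Since $\beta$ carries $X(\cF)$ into $Y(\tilde\cF)$ and $f$ is generically finite of degree $\delta$, the push-forward of cycles gives $\beta_*[X(\cF)] = \delta\cdot[Y(\tilde\cF)]$ in the appropriate Chow group. The strategy is then to intersect both sides with the toric cycle $[Z_\Pi]$ (or rather a suitable complementary-dimension toric cycle representing $Z_\Pi$) and use the projection formula.

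The main steps, in order, are as follows. First I would reduce to the smooth, strictly simplicial, compatible-fan situation, invoking Theorem~\ref{BasicTe}(1)--(2) so that refinements stay tropical and invoking Lemma~\ref{propmult} so that the multiplicities are unchanged under refinement; this is where the third hypothesis of the theorem gets used, to guarantee the morphism of fans exists after a common refinement. Second, I would identify the fibers: for a maximal cone $\Pi$ of $\tilde\cF$, the toric stratum $Z_\Pi$ is a single point times a subtorus, and $Y(\tilde\cF)\cap Z_\Pi$ is a zero-dimensional scheme of length $m_\Pi$; the preimage $\beta^{-1}(Z_\Pi)$ meets $X(\cF)$ precisely in the union of the strata $Z_\Gamma$ for maximal cones $\Gamma$ with $\mbA(\Gamma)\supseteq\Pi$, each such $X(\cF)\cap Z_\Gamma$ being zero-dimensional of length $m_\Gamma$ by Theorem~\ref{BasicTe}(3). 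Third, and this is the crux, I would compute the local contribution of each $Z_\Gamma$ to $\beta_*[X(\cF)]\cdot[Z_\Pi]$: near a point of $X(\cF)\cap Z_\Gamma$ the map $\beta$ looks like the monomial map of tori $\TT_\Gamma\to\TT_\Pi$ associated to the quotient lattices $N/N_\Gamma \to \tilde N/\tilde N_\Pi$ composed with $\mbA$, and the degree of this finite monomial map onto its image is exactly the lattice index $\ind(\Gamma,\Pi)$ appearing in the statement. Fourth, I would assemble: the projection formula $\deg\bigl(\beta_*[X(\cF)]\cdot[Z_\Pi]\bigr) = \deg\bigl([X(\cF)]\cdot\beta^*[Z_\Pi]\bigr)$ together with $\beta_*[X(\cF)] = \delta[Y(\tilde\cF)]$ yields
\begin{equation*}
\delta\cdot m_\Pi \;=\; \sum_{\Gamma:\,\mbA(\Gamma)\supseteq\Pi} m_\Gamma\cdot\ind(\Gamma,\Pi),
\end{equation*}
which is (\ref{formulaformult}).

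The step I expect to be the main obstacle is the third one: correctly interpreting, at a local stratum, the multiplicity with which $\beta$ folds $X(\cF)\cap Z_\Gamma$ onto $Y(\tilde\cF)\cap Z_\Pi$, and checking that this local intersection multiplicity is clean (no excess intersection, no embedded components) so that it really equals the lattice index $\ind(\Gamma,\Pi)$. This requires the Cohen--Macaulayness of $X(\cF)$ at the relevant points from Theorem~\ref{BasicTe}(3), so that lengths and intersection numbers agree as in Lemma~\ref{propmult}(1), together with a local toric computation comparing the monomial map of the small tori $N/N_\Gamma\to\tilde N/\tilde N_\Pi$. A secondary subtlety is bookkeeping the case where $\mbA(\Gamma)$ contains $\Pi$ but $\dim\mbA(\Gamma)$ could a priori exceed $\dim\Pi$; here one uses $\dim\cF=\dim\tilde\cF$ together with generic finiteness of $f$ to see that $\mbA$ restricted to such a $\Gamma$ is injective, so $\mbA(\Gamma)$ is again a maximal cone and the index is well-defined and finite.
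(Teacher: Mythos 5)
Your outline follows the same route as the paper's proof: pass to strictly simplicial, mutually compatible refinements of $\cF$ and $\tilde\cF$ (the paper does this carefully via Lemma~\ref{toricraynaud}, which also cleans up the sum so that after refinement only cones with $\mbA(\Gamma)=\Pi$ contribute), use the Cohen--Macaulayness from Theorem~\ref{BasicTe}(3) to get a fiber diagram with regular embeddings, and then assemble with proper push-forward and the projection formula, exactly as in \cite[Theorem~6.2 and Remark~6.2.2]{Fu}. The one concrete imprecision, precisely in the step you flag as the crux, is the local model: you describe it via the induced map on quotient lattices $N/N_\Gamma\to\tilde N/\tilde N_\Pi$, but that map controls the degree of the orbit morphism $Z_\Gamma\to Z_\Pi$, which is not what enters. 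The quantity you need is the multiplicity of the scheme $\beta^{-1}(\overline{Z_\Pi})$ along $Z_\Gamma$, and this is governed by the restriction of $\mbA$ to the cone sublattices, $\mbA:N_\Gamma\to\tilde N_\Pi$; the index $[\tilde N_\Pi:\mbA(N_\Gamma)]$ is precisely $\ind(\Gamma,\Pi)$, and this is what \cite[Prop.~2.7]{FS} (cited by the paper) gives. If you carry out the local computation with the quotient lattices you would in general get a different number. With that lattice corrected, your argument is a faithful reproduction of the paper's.
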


\begin{proof}
Let $k:=\dim (X)$. By Lemma~\ref{propmult}~(2) we can
replace  $\tilde \cF$ by any refinement.
We will thus assume from now on
that $\tilde \cF$ is strictly simplicial, i.e.~$\PP(\tilde \cF)$ is smooth.

Our next goals are to make $\PP(\cF)$ smooth and
 the morphism $\PP(\cF)\to\PP(\tilde\cF)$
flat. These goals are incompatible but the following lemma
provides a compromise.

\begin{lemma}\label{toricraynaud}
There exists a strictly simplicial refinement $\cF'$ of
$\cF$ such that
\begin{itemize}
\item $\,{\bf A}: \cF'\to \tilde \cF\,$ is a map of fans, and
\item if $\,\Gamma$ is any $k$-dimensional cone of $\cF'$ and
$\Pi$ is the minimal cone of $\cF$
that contains $\, {\bf A}(\Gamma)$ then either $\,\dim\Pi<\dim\Gamma \,$ or
$\,\Pi\, = \, {\bf A}(\Gamma)$.
\end{itemize}
\end{lemma}

\begin{proof}[Proof of Lemma~\ref{toricraynaud}]
We construct $\cF'$ in two steps.
First we define a refinement $\cF''$ as follows.
 For any  cone in $\,\Gamma\,$ (of any dimension),
the image $\,{\bf A}(\Gamma)\,$ is a union of cones in $\tilde \cF$.
We subdivide $\Gamma$ by preimages of these cones. Since $\cF$ is
a fan, this subdivision yields a  fan $\cF''$.
Moreover, if $\,\Gamma\in\cF''\,$ and
$\,\dim\Gamma=\dim {\bf A}(\Gamma)\,$
then $\Gamma$ is strictly simplicial since
${\bf A}(\Gamma)$ is.
To finish the construction of $\cF'$,
it remains to subdivide all cones $\Gamma\in \cF''$  with
$\dim\Gamma>\dim\bar g(\Gamma)$ into strictly simplicial subcones.
But we have to make sure that the collection of new cones is still
a fan. Arguing by induction on $\dim\Gamma$,
this construction reduces to the following fact which is well-known in
geometric combinatorics:
if $\Gamma$ is a rational polyhedral cone and $\partial\cH$ is a
strictly simplicial fan supported on the boundary $\partial\Gamma$,
then there is a strictly simplicial fan $\cH$ supported on
$\Gamma$ such that the restriction of $\cH$ to
$\partial\Gamma$ equals $\partial\cH$.
\end{proof}

By Lemma~\ref{propmult} (2), to prove \eqref{formulaformult}
it suffices to prove the analogous formula
\begin{equation}\label{formulamult}
m_\Pi \quad = \quad \,  \frac{1}{\delta} \,\, \cdot \!\!\! \sum_{\Gamma \in \cF':
{\bf A}(\Gamma)=\Pi} m_\Gamma\cdot\ind(\Gamma,\Pi)
\end{equation}
for any $d$-dimensional cone $\Pi$ of $\tilde \cF$.
To simplify notation we redenote $\cF'$~by~$\cF$.

Consider the smooth 
toric varieties $\PP(\cF)$ and $\PP(\tilde \cF)$.
The map of fans $\,{\bf A}  : \cF \rightarrow \tilde \cF\,$ 
defines a morphism $\,\alpha :\,\bP (\cF) \to \bP (\tilde \cF) \,$ that extends the homomorphism
of tori $\,\alpha :\, \TT \to \tilde \TT$.
The $d$-dimensional cone $\Gamma \subset \tilde \cF$ defines
a torus orbit $\tilde Z$ of codimension $d$ in  $\PP(\tilde \cF)$.
Our hypotheses imply that the fiber
 $\alpha^{-1}(\tilde Z)$  is the disjoint union of torus orbits
  $Z_1,\ldots,Z_r$ of codimension $d$ in $\PP(\cF)$.
  These orbits correspond to the cones
$\Gamma_1,\ldots,\Gamma_r$ in $\cF$ that satisfy
$\,{\bf A}(\Gamma_i)=\Pi$. Moreover, it is known (see
e.g.~\cite[Prop.~2.7]{FS}) that
the multiplicity of the scheme $\alpha^{-1}(\tilde Z)$ along $Z_i$ is
$\,\ind(\Gamma_i,\Pi)$.

By  Theorem~\ref{BasicTe}, the closure $ X(\cF)$ of $X$ in $\PP(\cF)$ is
Cohen-Macaulay along the subscheme $W:=\, \alpha^{-1}(\tilde Z)\cap X (\cF)$.
Therefore, we have a fiber diagram
$$
\begin{CD}
W @>i'>> X(\cF) \\
@VqVV     @V \alpha VV \\
Z  @>i>> \bP(\tilde \cF)
\end{CD}
$$
where $i$ and $i'$ are regular embeddings of codimension $d$.
By Lemma \ref{propmult} we have
$\,m_\Pi=\deg([X(\tilde \cF) ]\cdot [\tilde Z])$. Since the morphism $\alpha$
is proper, the push-forward property of
\cite[Theorem 6.2 (a) and Remark 6.2.2]{Fu} implies that
$$
m_\Pi \,\,= \,\,\deg\left(\frac{1}{\delta} \alpha_*([X(\cF)])\cdot [\tilde Z]\right) \,\,\, = \,\,\,
\frac{1}{\delta}\deg(q_*([W]))  \,\,\, = \,\,\, \frac{1}{\delta}\deg [W]. $$
From the irreducible decomposition of
$\alpha^{-1}(\tilde Z)$ discussed above, we obtain
$$ \deg [W]\,\,\,= \,\,\,\sum_{i=1}^r\ind(\Gamma_i,\Pi) \cdot \deg ([\tilde X]\cdot Z_i)
\,\,\, = \,\,\, \sum_{i=1}^r\ind(\Gamma_i,\Pi) \cdot m_{\Gamma_i}.$$
This completes the proof of Theorem \ref{mainmult},
and hence also of Theorem~\ref{funnyform}.
\end{proof}

\begin{example}
The special case of our construction when $X$ is defined
by linear equations appears in \cite[\S 3]{DFS}.
Here $\cT(X)$ is the Bergman fan of a matroid.
This is applied in \cite[\S 4]{DFS} to tropicalize
Kapranov's Horn uniformization of the A-discriminant.
\end{example}

At this point we wish to note that the
definition of multiplicity used here
(Definitions \ref{defmult} and \ref{defmult2})
agrees with  the formulation  in the Introduction
(and in \cite{DFS, STY}):

\begin{corollary}[{\cite[Remark 2.1]{DFS}}] \label{cor:assprimes}
Let $X$ be a subvariety of a torus $\TT$ and $I_X$ its ideal in $k[\TT]$.
If $w$ is any regular point in the tropical variety $\cT(X)$ then
 $m_\sigma$ equals the
 sum of multiplicities of all minimal associate primes
of the initial ideal $\text{in}_w (I_X)$.
\end{corollary}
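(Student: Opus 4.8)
The plan is to compute the tropical‑fan multiplicity $m_\sigma$ of Definition~\ref{defmult} by a Gr\"obner degeneration and to match it against $\sum_P\mathrm{mult}(P)$ term by term. Since $w$ is a regular point, $m_\sigma$ does not depend on the chosen tropical fan, so I fix a convenient one: take $\cF$ strictly simplicial, tropical for $X$, and refining the Gr\"obner tropical fan of Theorem~\ref{Groebnertrop}, with $w$ in the relative interior of a maximal cone $\Gamma$; let $Z\subset\bP(\cF)$ be the corresponding orbit, of codimension $k=\dim X$. Smoothness gives a chart $U_\Gamma\cong\bA^k\times\TT^{n-k}$ with coordinates $x_1,\dots,x_n$ in which $Z=\{x_1=\cdots=x_k=0\}$, and a subtorus $T_\Gamma\subset\TT^n$ with cocharacter lattice $N_\Gamma$ acting freely by translation with $\TT^n/T_\Gamma\cong Z$. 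The Gr\"obner refinement forces $\mathrm{in}_w(I_X)$ to be constant as $w$ varies over $\mathrm{relint}\,\Gamma$, hence homogeneous for the whole linear span of $\Gamma$; so $V(\mathrm{in}_w(I_X))\subset\TT^n$ is $T_\Gamma$‑invariant, and, being $T_\Gamma$‑invariant of dimension $k=\dim T_\Gamma$, it is a finite disjoint union of cosets $gT_\Gamma$ with multiplicities $\mu_P$ and has no embedded primes in $\TT^n$; in particular its associated primes are its minimal primes, and the quantity $\sum_P\mathrm{mult}(P)$ in the statement equals $\sum_P\mu_P$, with $\mu_P$ the length of $\mathrm{in}_w(I_X)$ at the generic point of $gT_\Gamma$.

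Next I would form the one‑parameter degeneration $\mathcal X\subset U_\Gamma\times\bA^1_t$, the closure of $\{(\lambda_w(t)\cdot x,t):x\in X(\cF)\cap U_\Gamma,\ t\neq0\}$: it is flat over $\bA^1$, its fibre over $t\neq0$ is a translate of $X(\cF)\cap U_\Gamma$, and its central fibre $\mathcal X_0$ satisfies $\mathcal X_0\cap\TT^n=V(\mathrm{in}_w(I_X))$. Everything then reduces to one claim: $x_1,\dots,x_k$ form a regular sequence on the local rings of $\mathcal X_0$ at the finitely many points of $\mathcal X_0\cap Z$. Granting this, two things follow. First, since $\lambda_w$ fixes the coordinates $x_{k+1},\dots,x_n$ of $Z$, the family $\mathcal X\cap(Z\times\bA^1)\to\bA^1$ has constant fibre $X(\cF)\cap Z$ for $t\neq0$, and the regular sequence makes $t$ a nonzerodivisor on its coordinate ring near $t=0$, so that family is flat and $\mathrm{length}(\mathcal X_0\cap Z)=\mathrm{length}(X(\cF)\cap Z)=m_\sigma$. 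Second, the closure of each coset $gT_\Gamma$ in $U_\Gamma$ is $\bA^k\times\{\mathrm{pt}\}$, meeting $Z$ transversally in one point, and — again using that $x_1,\dots,x_k$ cut $\mathcal X_0$ without embedded contributions near $Z$ — $\mathrm{length}(\mathcal X_0\cap Z)=\sum_P\mu_P$; equivalently, $V(\mathrm{in}_w(I_X))\to V(\mathrm{in}_w(I_X))/T_\Gamma\subseteq Z$ is a $T_\Gamma$‑torsor, hence faithfully flat, which identifies the colength of the quotient with $\sum_P\mu_P$. Combining the two equalities gives $m_\sigma=\sum_P\mu_P=\sum_P\mathrm{mult}(P)$.

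The crux, and essentially the only non‑formal step, is the regular‑sequence claim, and this is exactly where tropicality of $\cF$ is used. By Theorem~\ref{BasicTe}(3) the scheme $X(\cF)$ is Cohen--Macaulay along the $0$‑dimensional $X(\cF)\cap Z$, so $x_1,\dots,x_k$ is a regular sequence on $\cO_{X(\cF)}$ there; one must then propagate this through the degeneration — e.g. by flatness of the Rees algebra of the weight filtration defined by $w$ together with a standard semicontinuity argument, or by directly checking that $t,x_1,\dots,x_k$ is a regular sequence on $\cO_{\mathcal X}$ at the relevant points — to conclude that $x_1,\dots,x_k$ remains a regular sequence on $\cO_{\mathcal X_0}$ near $\mathcal X_0\cap Z$. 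Without Cohen--Macaulayness one only controls that the length of $\mathcal X_0\cap Z$ may jump, giving the inequality $\sum_P\mathrm{mult}(P)\ge m_\sigma$ rather than equality, so the equality genuinely uses that $\cF$ is tropical, in parallel with the way Theorem~\ref{BasicTe}(3) is invoked in Example~\ref{redherring}. One may equally enter through Lemma~\ref{propmult}(1), writing $m_\sigma=\deg([Z]\cdot[X(\cF)])$ on the smooth $\bP(\cF)$ and degenerating the cycle $[X(\cF)]$ to $\sum_P\mu_P[\,\overline{gT_\Gamma}\,]$ with $\deg([Z]\cdot[\,\overline{gT_\Gamma}\,])=1$, but the same Cohen--Macaulay input is what guarantees that the degenerate cycle still meets $Z$ properly.
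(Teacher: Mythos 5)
The construction you set up — degenerating $X(\cF)$ along $\lambda_w$ and comparing $\mathrm{length}(\mathcal X_0\cap Z)$ with $m_\sigma$ and $\sum_P\mu_P$ — is, up to an inversion of the torus, the base change of the multiplication map $\Psi:\TT\times X(\cF)\to\bP(\cF)$ along the arc $\overline{\lambda_w(\bG_m)}\hookrightarrow\bP(\cF)$, and this is indeed the engine behind the paper's one-line reference to the proof of Lemma~\ref{propmult}(2). But the mechanism you invoke for the crucial regular-sequence claim is not correct, and this is a genuine gap. You argue: $X(\cF)$ is Cohen--Macaulay along $X(\cF)\cap Z$ by Theorem~\ref{BasicTe}(3), and this should ``propagate'' to $\mathcal X_0$ via flatness of the Rees algebra plus a semicontinuity argument. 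Cohen--Macaulayness, however, is not upper-semicontinuous towards the special fibre of a flat family: the standard example of two disjoint lines in $\bP^3$ degenerating flatly to two incident lines produces a special fibre with an embedded point, hence non-CM, even though every nearby fibre is smooth. Knowing that the fibres $\mathcal X_t$ for $t\ne 0$ are CM near $Z$ (which is what $X(\cF)$ being CM gives you, after translation) tells you nothing a priori about $\mathcal X_0$. The ``directly checking that $t,x_1,\dots,x_k$ is a regular sequence on $\cO_{\mathcal X}$'' option is exactly what needs a proof, and neither route you sketch supplies one.

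The ingredient that actually closes the gap — and the one the paper relies on — is the \emph{flatness of $\Psi$} (the tropical-fan hypothesis), not CM of $X(\cF)$ per se. From the $\TT$-equivariance of $\Psi$ one has the scheme-theoretic identification $\Psi^{-1}(p_w)\cong H\times(X(\cF)\cap Z)$, and this fibre is Cohen--Macaulay for the trivial reason that it is a torus times a zero-dimensional scheme. Since $\Psi$ is flat over the smooth base $\bP(\cF)$ and has a CM fibre there, the total space (and hence $\mathcal X\cong\Psi^{-1}(\bA^1)$, cut out by a regular sequence) is CM along $\Psi^{-1}(p_w)$; projecting to $\TT$, the flat limit is a disjoint union of $H$-cosets with multiplicities summing to $m_\Gamma$, which is precisely $V(\mathrm{in}_w(I_X))$ (up to the sign/inversion in your setup), and the corollary follows. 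Note also that the $\TT$-equivariance of $\Psi$ is what guarantees $\mathcal X_0$ has no components supported on $Z$ and no embedded primes there — facts you implicitly use but do not establish. Your closing cycle-theoretic variant, via Lemma~\ref{propmult}(1) and conservation of number, is the soundest of your three suggestions, but the attribution there is also off: CM is used only once, inside Lemma~\ref{propmult}(1) to get $m_\sigma=\deg([Z]\cdot[X(\cF)])$, not to make the degenerate cycle meet $Z$ properly (that is automatic since each component $\overline{gT_\Gamma}\cong\bA^k\times\{\mathrm{pt}\}$ is transverse to $Z$); moreover $\deg$ is not defined on $A_0$ of the non-complete $U_\Gamma$, so that computation should be carried out inside the complete $\bP(\cF)$.
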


\begin{proof}
This can be derived from the proof of
part (2) in Lemma \ref{propmult}.
\end{proof}

\begin{example} \label{ex:hypersurface}
Consider an irreducible hypersurface $X$ in $\TT$
with principal ideal $I_X = \langle G \rangle$.
The  Laurent polynomial $G$
is unique up to multiplication by a unit.
Its Newton polytope $Q\subset M_\bQ$ is unique up to translation.
 For any $w\in N_\bQ$, $\text{in}_w (G) $ 
 is the sum of all terms in $G$ that belong
 the face of $Q$ at which $w$ is minimized.
 This face is denoted  ${\rm face}_w(Q)$,  and we have
 $\text{in}_w(I_X) = \langle \text{in}_w(G) \rangle$.
 The tropical hypersurface
 $\cT(X)$ is the set of $w\in N_\bQ$ such that ${\rm face}_w(Q)$
has dimension $\geq 1$, i.e.~$\cT(X)$ is a normal fan of $Q$ with top-dimensional 
cones removed. 
A point $w \in \cA(X)$ is regular when $
{\rm face}_w(Q)$ is an edge. In this case we regard
${\rm in}_w(G)$ as a univariate polynomial. Its roots
correspond to minimal associate primes of $\text{in}_w (I_X)$.
Counting multiplicities, the number $m_\sigma$ of roots
 equals the number of lattice points on the edge
${\rm face}_w(Q)$.
\end{example}

\begin{remark}\label{getNewton}
In Example \ref{ex:hypersurface}, the Newton polytope of $Q$ can be recovered 
(up to translation)
from the tropical hypersurface $\cT(X)$.
Vertices of $Q$ correspond bijectively  to connected components of $N_\bQ
\backslash \cT(X)$.
Let $u \in N_\QQ$ be a general vector in such a component.
We compute the corresponding vertex, which is ${\rm face}_u(Q)$,
using the following formula from \cite[Theorem 2.2]{DFS}.
Consider the halflines $L_i = u + \QQ_{\geq  0} e_i$ 
where $\{e_1,\ldots,e_n\}$ is some fixed basis of $N$.
Each halfline $L_i$ intersects
$\cT(X)$ in finitely many points $v$, and with
each of these we associate the nonnegative integer
$$ \bigl[N \,:\, \bZ e_i + (\bL_w \cap N) \bigr] \cdot m_v. $$
The $i$-th coordinate of the vertex is the sum of these integers,
for all  $v \in L_i \cap \cA(X)$.
\end{remark}

\section{Complete Intersections and Mixed Volumes}\label{completeintersection}

In this section we study the case when
$X$ is a generic complete intersection
in a torus $\TT=\TT^n$. Our tropical approach
recovers and refines the elimination theory based on Newton polytopes
which was developed by Khovanskii and Esterov~\cite{KE}.

\begin{notation}
Let $P_1,\ldots,P_c$ be lattice polytopes in $M_\QQ$.
Let $P:=P_1+\cdots+P_c$ be their {\em Minkowski sum} and 
write  $P_I:=\sum_{i \in I} P_i$
for any $I\subseteq\{1,2,\ldots,c\}$.
We shall always assume that $\dim P=n$.
The {\em normal fan} $\cN$ of $P$ is a complete fan in $N_\bQ$.
\end{notation}

For each $i=1,\ldots,c$, let
$f_i$ be a Laurent polynomial in
$k[\TT]$ whose Newton
polytope equals $P_i$ and whose coefficients
are generic subject to this property.
We are interested in the generic complete intersection
$X=\, \{f_1 = \cdots = f_c = 0\}$ in $\TT$.

\begin{remark}
The variety $X$ need not be irreducible because
the $P_i$ can be one-dimensional even if $n \geq 2$.
 This poses some notational difficulties since irreducibility 
 of $X$ was assumed in the previous two sections.
  However, as we shall see, all irreducible (= connected) components of $X$ have
exactly the same tropicalization and multiplicities.
Thus by $\cT(X)$
we will mean the tropicalization of any irreducible component, with multiplicities
multiplied by the number of components.
This is consistent with the interpretation of the tropical variety as a cohomology class.
\end{remark}

As before in Example \ref{ex:hypersurface},
for any $w \in N_\QQ = {\rm Hom}(M_\QQ,\QQ)$ and any polytope $Q$,
we write ${\rm face}_w(Q)$ for the face of $Q$
at which $w$ attains its minimum.
Note that 
\begin{equation}\label{facemink}
{\rm face}_w (P_I) \,= \,\sum_{i \in I} {\rm face}_w (P_i).
\end{equation}

\begin{theorem}[{\cite[Prop. 2.4]{STY}}] \label{three}
The tropical variety $\cT(X)$ is supported on a subfan of the
normal fan $\cN$ of the Minkowski sum $P = P_1 + \cdots+ P_c$.
A point $w \in N_\QQ$ lies in $\cT(X)$ if and only if
the polytope (\ref{facemink}) has dimension $\geq |I|$ for any $I \subseteq \{1,2,\ldots,c\}$.
\end{theorem}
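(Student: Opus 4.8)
The plan is to prove the two assertions by combining the divisorial/geometric description of $\cT(X)$ from Section~2 with a transversality (genericity) argument. First I would fix a tropical fan $\cF$ for (a component of) $X$ and a complete fan $\cG$ refining both $\cF$ and the normal fan $\cN$ of $P$; by Proposition~\ref{skeleton} it suffices to decide, for each cone $C$ of $\cG$, whether the toric stratum $Z_C$ of $\bP(\cG)$ meets $X(\cG)$. Since $\cG$ refines $\cN$, the interior of $C$ lies in the interior of a unique cone of $\cN$, hence ${\rm face}_w(P_i)$ is independent of $w$ in the relative interior of $C$; write $F_i(C)$ for this face and $F_I(C)=\sum_{i\in I}F_i(C)$, which by \eqref{facemink} equals ${\rm face}_w(P_I)$. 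The stratum $Z_C$ is a torus $\TT_C$, and the closure $X(\cG)$ meets $Z_C$ exactly along the subvariety of $\TT_C$ cut out by the initial forms ${\rm in}_w(f_i)$, which are Laurent polynomials on $\TT_C$ with Newton polytopes $F_i(C)$ (and again generic coefficients, since the coefficients of $f_i$ were generic). Thus $w\in\cT(X)$ iff this initial complete intersection is nonempty.

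The heart of the argument is therefore the following combinatorial--geometric claim: a generic system $g_1=\cdots=g_c=0$ on a torus $\TT_C$, with $g_i$ having Newton polytope $F_i(C)$, has a nonempty solution set if and only if $\dim\bigl(\sum_{i\in I}F_i(C)\bigr)\ge|I|$ for every $I\subseteq\{1,\dots,c\}$. The direction ``nonempty $\Rightarrow$ dimension bound'' is Bernstein's/Khovanskii's transversality: if $\dim F_I(C) < |I|$ for some $I$, then after a monomial change of coordinates the polynomials $\{g_i : i\in I\}$ all involve at most $\dim F_I(C) < |I|$ of the torus coordinates, so generically they already have no common zero (too many independent generic equations in too few variables on a torus). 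The converse ``dimension bound $\Rightarrow$ nonempty'' is the substantive half: one shows by induction on $c$, peeling off one polynomial at a time and using the hypothesis to guarantee that each successive generic hypersurface meets the variety cut out by the previous ones — equivalently, that the relevant mixed volume (or a suitable mixed degree) is positive precisely under the stated rank condition. This is exactly the content of \cite[Prop. 2.4]{STY}, which the theorem cites, so I would invoke it, but I would also record the short self-contained inductive proof sketched above.

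Granting this claim, the theorem follows: for the relative interior of a top-dimensional cone $C$ of $\cG$ (which lies in a top-dimensional cone of $\cN$, so $\dim F_i(C)=0$ for all $i$), the condition $\dim F_{\{i\}}(C)\ge 1$ already fails, so no such $C$ meets $\cT(X)$ — confirming $\cT(X)$ is supported on a subfan of $\cN$ with its top cones removed. For lower-dimensional $C$, membership $w\in\cT(X)$ is equivalent to nonemptiness of the initial complete intersection, which by the claim is equivalent to $\dim F_I(C)\ge|I|$ for all $I$, i.e. to the stated condition on \eqref{facemink}; and this condition depends only on the cone of $\cN$ containing $w$, which is why $\cT(X)$ is a subfan of $\cN$. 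Finally, the remark preceding the theorem (all connected components of $X$ share the same tropicalization) lets us pass from a single component back to $X$ itself without changing the support. The main obstacle is the positive-dimensionality half of the combinatorial claim — controlling genericity along the induction so that each added equation genuinely drops the dimension rather than leaving an empty intersection prematurely — but this is precisely where the mixed-volume machinery of the next results (Theorem~\ref{main4}) does the bookkeeping, and for the present set-theoretic statement the elementary transversality argument suffices.
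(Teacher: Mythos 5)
Your outline is broadly parallel to the paper's: both reduce the statement to deciding which toric strata meet the closure of $X$, and both ultimately come down to a Bernstein-type criterion for when a system of $c$ generic equations with prescribed Newton polytopes has a solution on a torus. But there are two places where your argument, as written, does not yet constitute a proof.

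First, you assert that $X(\cG)$ meets the orbit $Z_C$ exactly along the scheme cut out in $\TT_C$ by the initial forms $\mathrm{in}_w(f_i)$. This is true for the generic coefficients considered here, but it is a transversality statement that must be established; the paper gets it as a by-product of the Bertini argument showing that the $\ov E_i$ and the toric boundary in $\bP(\tilde\cN)$ form a simple normal crossings configuration. The paper in fact sidesteps the initial-ideal description in the proof of Theorem~\ref{three} altogether, working instead with the line bundles $\cO(\ov E_i)$ restricted to $Z$, which avoids the need to identify $X(\cG)\cap Z_C$ as an explicit initial degeneration (that identification is only used later, in Theorem~\ref{main4}).

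Second — and this is the genuine gap — the ``dimension bound $\Rightarrow$ nonempty'' half of your combinatorial claim is where all the content lies, and your sketch (``peel off one polynomial at a time'') glosses over exactly the difficulties that make it nontrivial: after intersecting with the first few hypersurfaces the resulting variety is no longer a torus, so you cannot recurse on Newton polytopes; you must control reducibility and keep track of how each new hypersurface meets each connected component. The paper's proof replaces this combinatorial bookkeeping with a geometric argument, Lemma~\ref{genKo}, a mixed Kodaira--Fujita criterion: for globally generated line bundles $L_1,\dots,L_c$ on a $c$-dimensional integral $Z$, the product $L_1\cdots L_c>0$ iff $\dim Z_K\ge|K|$ for all $K$. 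Its proof is an induction on $c$ using Bertini, the projection formula, and a careful analysis of the fibers of the maps $Z\to Z_K$ — precisely the points your sketch waves at. Applied to $Z=$ a $c$-dimensional toric orbit in $\bP(\tilde\cN)$ and $L_i=\cO(\ov E_i)|_Z$, with the transversality from Bertini, it gives the result. So you are entitled to cite \cite[Prop. 2.4]{STY} (that is where the theorem is proved), but if the goal is the alternative self-contained derivation the paper advertises, the inductive sketch needs to be replaced by something like Lemma~\ref{genKo}, and the transversality of the closure with the toric boundary needs to be proved, not assumed.
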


For $w=0$ we obtain the following criterion for
when the variety $X$ is non-empty:

\begin{corollary} \label{nonemptycrit}
$X$ is not empty  if and only if
$\dim P_I\geq |I|$
for every $I\subseteq \{1,2,\ldots,c\}$.
\end{corollary}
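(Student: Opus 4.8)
The plan is to deduce this directly from Theorem \ref{three} by specializing the parameter to $w=0$. For the zero functional every point of a polytope $Q$ attains the minimum, so ${\rm face}_0(Q)=Q$; in particular ${\rm face}_0(P_I)=P_I$ for every $I\subseteq\{1,\ldots,c\}$. Hence the criterion of Theorem \ref{three} says precisely that $0\in\cT(X)$ if and only if $\dim P_I\geq|I|$ for all $I$ (the case $I=\emptyset$ being vacuous, since $P_\emptyset=\{0\}$ has dimension $0=|\emptyset|$). It therefore remains only to identify the condition $0\in\cT(X)$ with the non-emptiness of $X$.

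First I would observe that $\cT(X)$ is the support of a rational polyhedral fan, so it is either empty or contains the origin as its minimal cone; thus $0\in\cT(X)$ is equivalent to $\cT(X)\neq\emptyset$. Next I would check that $\cT(X)=\emptyset$ exactly when $X=\emptyset$: if $X=\emptyset$ then $I_X$ is the unit ideal, so $1\in{\rm in}_v(I_X)$ for every $v$ and $\cT(X)=\emptyset$ by the defining formula \eqref{deftropvar}; conversely, if $X\neq\emptyset$ then $X(k)\subseteq X(K)$ is non-empty, so by the Einsiedler--Kapranov--Lind theorem the set $\cA(X)=\{[\val_\gamma]\mid\gamma\in X(K)\}$ is non-empty. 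For the (possibly reducible) $X$ of Theorem \ref{three} I would invoke the remark preceding that theorem, namely that all irreducible components of $X$ have the same tropicalization, so that $\cT(X)\neq\emptyset$ still amounts to $X$ having some component, i.e.\ to $X\neq\emptyset$.

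Combining these observations yields
$$X\neq\emptyset\ \Longleftrightarrow\ \cT(X)\neq\emptyset\ \Longleftrightarrow\ 0\in\cT(X)\ \Longleftrightarrow\ \dim P_I\geq|I|\ \text{ for every }I\subseteq\{1,\ldots,c\},$$
which is the assertion. I do not expect any genuine obstacle here: the only points that need a moment's care are the bookkeeping in the reducible case and the standard fact that a non-empty subvariety of a torus over an algebraically closed field has non-empty tropicalization, both of which are already furnished by the results recalled above.
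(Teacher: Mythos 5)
Your proposal is correct and follows exactly the route the paper itself takes: the paper offers no separate proof and simply remarks that the corollary is the $w=0$ specialization of Theorem \ref{three}, and you have filled in precisely the bookkeeping (that ${\rm face}_0(P_I)=P_I$, that $0\in\cT(X)$ is equivalent to $\cT(X)\neq\emptyset$ since the support of a fan always contains the origin when nonempty, and that $\cT(X)\neq\emptyset$ is equivalent to $X\neq\emptyset$ over an algebraically closed field) that the paper leaves implicit.
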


In \cite{STY} we proved Theorem \ref{three}
using combinatorial convexity. Here we 
present an alternative derivation using following general lemma,
which is a mixed version of the
 Kodaira--Fujita Lemma \cite[VI.2.16]{Ko}
that characterizes big and nef line bundles.

\begin{lemma}\label{genKo}
Let $L_1,L_2 ,\ldots,L_c$ be globally generated line bundles on 
a $c$-dimensional integral scheme $Z$.
For  $K\subseteq \{1,\ldots,c\}$ let $Z_K$ be the image of $Z$ under the map
given by $\otimes_{i\in K} L_i$.
Then $L_1\cdot L_2 \cdot \ldots\cdot L_c\ge0$ and this number is positive if and only if
\begin{equation}\label{condo}
\dim Z_K\ge |K|\quad\text{for any}\quad K\subseteq \{1,\ldots,c\}.
\end{equation}
\end{lemma}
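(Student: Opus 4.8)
The plan is to reduce the statement to the classical Kodaira--Fujita Lemma by an induction on $c$ that peels off one line bundle at a time. The nonnegativity $L_1\cdots L_c\ge 0$ is standard for globally generated (hence nef) line bundles on a projective scheme, so the real content is the characterization of when the intersection number is strictly positive. I would first dispose of a degenerate case: if $\dim Z_K<|K|$ for some $K$, I want to show $L_1\cdots L_c=0$. Take such a $K$ with $|K|$ minimal; after reindexing assume $K=\{1,\ldots,k\}$ with $\dim Z_K=:m<k$. The product $L_1\cdots L_k$ is the pullback under $Z\to Z_K$ of an intersection of $k$ ample (or at least nef and big on their own images) classes on an $m$-dimensional variety with $m<k$, so by the projection formula $L_1\cdots L_k\cdot(\text{anything of complementary dimension})=0$, whence $L_1\cdots L_c=0$. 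This handles the ``only if'' direction.

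For the ``if'' direction, assume \eqref{condo} holds for all $K$. I would induct on $c$. Choose a general member $H\in |L_c|$; since $L_c$ is globally generated, $H$ can be taken to avoid any prescribed finite set of associated points, so that $H$ is an effective Cartier divisor and $L_1\cdots L_c=(L_1|_H)\cdots(L_{c-1}|_H)$ on the $(c-1)$-dimensional scheme $H$ (using that the intersection number only sees the top-dimensional part of $H$). The key point is that \eqref{condo} is inherited: for $K\subseteq\{1,\ldots,c-1\}$ the image of (a component of) $H$ under $\otimes_{i\in K}L_i$ has dimension at least $|K|$ — indeed, since $K\cup\{c\}$ satisfies $\dim Z_{K\cup\{c\}}\ge |K|+1$, a general hyperplane section in the $L_c$-direction cuts down $Z_{K\cup\{c\}}$ by exactly one, and its image under $\otimes_{i\in K}L_i$ still has dimension $\ge|K|$. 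Applying the inductive hypothesis on $H$ (or on its top-dimensional irreducible components, each of which still satisfies the hypothesis) gives $(L_1|_H)\cdots(L_{c-1}|_H)>0$, and hence $L_1\cdots L_c>0$. The base case $c=1$ says exactly that a globally generated line bundle $L_1$ on an integral curve $Z$ has positive degree iff $\dim Z_{\{1\}}\ge 1$, i.e.~iff the map given by $L_1$ is nonconstant, which is immediate.

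The main obstacle I anticipate is bookkeeping with reducibility and non-reducedness when passing to the hyperplane section $H$: $H$ need not be integral even if $Z$ is, so I must argue componentwise and be careful that the dimension condition \eqref{condo} survives on each top-dimensional component of $H$ (this uses the strict inequality $\dim Z_{K\cup\{c\}}\ge|K|+1$ rather than just $\ge|K|$, which is why the indexing sets in the hypothesis are crucial). A cleaner alternative, which I would mention, is to invoke the mixed-multiplicity formalism directly: the number $L_1\cdots L_c$ is a mixed multiplicity of the graded family $\bigoplus L_1^{a_1}\otimes\cdots\otimes L_c^{a_c}$, and positivity of mixed multiplicities under the stated dimension hypotheses is essentially Teissier's/Rees--Sharp mixed-multiplicity criterion; but the hyperplane-section induction above is elementary and self-contained, so I would present that as the primary argument.
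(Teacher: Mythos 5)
Your overall strategy (reduce to smooth $Z$, intersect with a general $H\in|L_c|$, induct on $c$) matches the paper's, but the key inheritance step has a real gap. You verify that the \emph{image} $\phi_K(H)$ of all of $H$ under $\otimes_{i\in K}L_i$ has dimension at least $|K|$ (which, after a case split on whether $Z_{K\cup\{c\}}\to Z_K$ is generically finite, is in fact true), and then you assert in passing that this holds for ``each top-dimensional irreducible component'' of $H$. That assertion is never justified, and it is exactly what the induction needs: for each $K$ your argument only produces \emph{some} component of $H$ with large $\phi_K$-image, and nothing rules out that these components differ as $K$ varies, so that no single component $Z'$ satisfies \eqref{condo} for all $K\subseteq\{1,\ldots,c-1\}$ at once.

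The paper handles this by reversing the order of the choices: fix a general $x\in Z$ \emph{first}, so the irreducible component $F$ of the $\phi_K$-fiber through $x$ has the generic dimension $c-\dim Z_K$; then take a general $H\in|L_c|$ through $x$ and let $Z'$ be the component of $H$ through $x$. If $\dim Z'_K<|K|$, one gets $F\subset Z'\subset H$ for \emph{every} general $H$ through $x$, hence $\phi_c(F)$ lies in every general hyperplane through $\phi_c(x)$ and is therefore a single point. Combined with $\dim F=c-|K|$ and the fact that the image of $|L_K|\otimes|L_{\{c\}}|$ in $|L_{K\cup\{c\}}|$ is base-point free, this forces $F$ into a fiber of $\phi_{K\cup\{c\}}$, contradicting $\dim Z_{K\cup\{c\}}\geq|K|+1$. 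Your argument, which quantifies over a single $H$ and works at the level of images rather than fibers, has no mechanism for concluding that $\phi_c(F)$ is a point; that step is the heart of the proof. A secondary, smaller, issue: in the ``only if'' direction you say $L_1\cdots L_k$ ``is the pullback under $Z\to Z_K$ of an intersection of $k$ ample classes,'' but the individual $L_i$ ($i\in K$) are not pullbacks from $Z_K$ --- only their tensor product is. The clean argument (and the paper's) is simply that general $E_i\in|L_i|$, $i\in K$, have empty common intersection once $\dim Z_K<|K|$, since their intersection is the preimage in $Z$ of a general multilinear section of the image of $Z$ in $\prod_{i\in K}\PP(H^0(L_i)^\vee)$.
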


\begin{proof}
By the Projection Formula \cite[VI.2.11]{Ko}, we can replace $Z$ with
its resolution of singularities. So, we may assume that $Z$ is smooth.
By Bertini's Theorem, we can choose smooth divisors $E_i\in|L_i|$ that intersect
transversally. The number of intersection points equals our
intersection number $L_1\cdot \ldots\cdot L_c$.
In particular, the number is nonnegative
and it is positive if and only if $ E_1 \cap \cdots \cap E_c \not= \emptyset$.
The condition \eqref{condo} is necessary because 
$\dim Z_K< |K|$ implies $\cap_{i\in K}E_i=\emptyset$
for general divisors $E_i\in|\cL_{i}|$.
Now assume \eqref{condo} is satisfied.
We use induction on $c$. The case $c=1$ is obvious. Assume that $c>1$.
Let $x\in Z$ be a general point. By Bertini Theorem, we can assume that $x$
lies in a smooth divisor from $|\cL_c|$.
Let $Z'$ be the connected component of $Z$ that contains $x$.
It suffices to check the condition \eqref{condo} for $Z'$
and all subsets $K\subseteq \{1,\ldots,c-1\}$.
Arguing by contradiction, suppose that
$\dim Z'_K<|K|$ for some $K$.
Let $F$ be the irreducible component of the fiber of $Z\to Z_K$ that passes through $x$.
Then $\dim (F\cap Z' ) \ge c-|K|$.
By our assumptions (and genericity of $x$), it follows that
$\dim F=c-|K|$ and $F\subset Z'$.
As this holds for any component $Z'$, the map $Z\to Z_{\{c\}}$ given by $\cL_c$ contracts $F$ to a point.
Therefore, $F$ is contained in a fiber of the map $Z\to Z_K\times Z_{\{c\}}$.
Since the image of $|\cL_K|\otimes|\cL_{\{c\}}|$
in $|\cL_{K\cup\{c\}}|$ is a base-point-free linear subsystem, $F$ is contained in a fiber of the map $Z\to Z_{K\cup\{c\}}$.
This contradicts our assumption that $\dim Z_{K\cup\{c\}}\ge |K|+1$.
\end{proof}

\begin{proof}[Proof of Theorem \ref{three}]
Let $A_i$ be the set of monomials in $f_i$, each regarded
as a lattice point in $P_i$, and consider the morphism
$\TT\to\bP^{|A_i|-1}$ defined by these monomials.
Let $\tilde\cN$ be a strictly simplicial refinement of $\cN$.
There is a sequence of morphisms
\begin{equation}\label{basictoric}
\bP(\tilde\cN)\to \bP(\cN)\to \bP^{|P_i|-1}\dasharrow\bP^{|A_i|-1},
\end{equation}
where $\bP^{|P_i|-1}\dasharrow\bP^{|A_i|-1}$ is a linear projection.
We have similar toric morphisms
\begin{equation}\label{anothertoric}
\bP(\tilde\cN)\to\bP(\cN)\to \bP^{|P_K|-1}\qquad\text{for any $K\subseteq\{1,\ldots,c\}$}.
\end{equation}
The toric variety $\bP(\tilde\cN)$ is smooth and its toric boundary $\bigcup\limits_{i=1}^rD_i$
has simple normal crossings. Consider the hypersurface $E_i:= \{f_i=0 \} \subset \TT$ 
and its closure  $\ov E_i$ in  $\bP(\tilde\cN)$.
Let $\,\{\ov E_i^\alpha\}_{\alpha \in J_i} \,$ be the irreducible components of $\oE_i$.
Then $\ov E_i$ is a pullback of a general hyperplane divisor in $\bP^{|A_i|-1}$ 
for the morphism \eqref{basictoric}.
It follows from Bertini's Theorem that each $\ov E_i$ is smooth,
$E_i^\alpha\cap E_i^\beta=\emptyset$ for $\alpha\ne\beta$, and
the union of all divisors (both of toric type $D_i$ and of type $\ov E_i^\alpha$)
has simple normal crossings. 

Recall that our very affine variety $X$ is the intersection $\bigcap_{i=1}^c E_i$
in $\TT$.
It follows that its closure $X(\tilde\cN)$ in $ \bP(\cN)$
is smooth, of codimension $c$,
and intersects toric boundary divisors $D_1,\ldots,D_r$ transversally.
It follows that the multiplication map 
$$ \TT\times X(\tilde\cN)\to\bP(\tilde\cN)$$
is smooth, and in particular flat.
By Theorem \ref{BasicTe}, the tropicalization of any irreducible component 
$X'$ of $X$ is equal to the union of cones of $\tilde\cN$
that correspond to 
$c$-dimensional toric strata $Z$ which intersect the closure 
$X'(\tilde \cN)$ of $X'$ in $\bP(\tilde \cN)$.

Let $Z$ be such a toric stratum and 
 $w$ any vector in the relative interior of the 
 corresponding cone in $\tilde \cN$.
Lemma~\ref{genKo} states that $X'(\tilde\cN)$ 
intersects the toric stratum $Z$ if and only if 
the image of $Z$ under the morphism given by the sum of divisors $\sum_{i\in K}\ov E_{i}$
has dimension at least $|K|$ for all $K\subseteq\{1,\ldots,c\}$.
This image is the toric stratum in the toric variety of $P_K$ corresponding to
${\rm face}_w(P_K)$. We conclude that 
$w$ lies in $\cT(X') = \cT(X)$ if and only if
${\rm dim} ({\rm face}_w(P_K))$ is at least $|K|$, for all $K$.
\end{proof}

We now describe the multiplicities on the tropical complete intersection  $\cA(X)$
specified by $P_1,\ldots,P_c$.
The following theorem is the main result in this section.

\begin{theorem} \label{main4}
The multiplicity of $\cT(X)$ at a regular point $w$ is
 the mixed volume
  \begin{equation}
\label{mixedvol} m_w \,\,\, = \,\,\, {\rm MV}\bigl( {\rm face}_w(P_1), {\rm face}_w(P_2), \ldots,
{\rm face}_w(P_c) \bigr).
\end{equation}
Here the volume is
normalized with respect to the sublattice of $M$ orthogonal to ${\bL_w}$.
\end{theorem}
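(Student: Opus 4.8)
The plan is to build on the explicit smooth model $\bP(\tilde\cN)$ constructed in the proof of Theorem~\ref{three}, where the closure $X(\tilde\cN)$ is smooth, of codimension $c$, and meets the toric boundary divisors $D_1,\ldots,D_r$ transversally. Fix a regular point $w\in\cT(X)$, let $\Gamma$ be the $c$-dimensional cone of $\tilde\cN$ containing $w$ in its relative interior, and let $Z$ be the corresponding $c$-dimensional (i.e.\ codimension-$c$) toric stratum of $\bP(\tilde\cN)$. By Definition~\ref{defmult} the multiplicity $m_w$ is the length of the zero-dimensional scheme $X(\tilde\cN)\cap Z$; since $\bP(\tilde\cN)$ is smooth and the intersection is transverse (inherited from Bertini as in the proof of Theorem~\ref{three}), this length equals the intersection number $\deg\bigl([X(\tilde\cN)]\cdot[Z]\bigr)$ computed inside $\bP(\tilde\cN)$, or equivalently inside the complete toric variety obtained by completing $\tilde\cN$.

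Next I would compute this intersection number by the standard toric dictionary. Recall $X(\tilde\cN)=\oE_1\cap\cdots\cap\oE_c$ scheme-theoretically, and each $\oE_i$ is the pullback to $\bP(\tilde\cN)$ of a generic hyperplane section under the morphism associated to the globally generated line bundle $\cL_i$ whose polytope is $P_i$; thus $[X(\tilde\cN)]=c_1(\cL_1)\cdots c_1(\cL_c)\cap[\bP(\tilde\cN)]$ as a cycle class (the product is proper and transverse, so no excess-intersection correction is needed). Hence
\begin{equation*}
m_w \,=\, \deg\bigl(c_1(\cL_1)\cdots c_1(\cL_c)\cap[Z]\bigr).
\end{equation*}
Now $[Z]$ is the class of the torus-invariant subvariety $V(\Gamma)$, which is itself the toric variety of the quotient fan $\mathrm{Star}(\Gamma)$ sitting in $(N/N_\Gamma)_\QQ$, or equivalently (by toric duality) the polarized toric variety attached to the face polytope $\mathrm{face}_w(P)$ of $P=P_1+\cdots+P_c$, living in the quotient lattice $M/(M\cap\bL_w)$. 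Restricting each $\cL_i$ to $Z$ gives the globally generated line bundle on this face-toric-variety whose polytope is $\mathrm{face}_w(P_i)$, using \eqref{facemink}. Therefore the displayed degree is an intersection number of $c$ nef toric line bundles on a $c$-dimensional toric variety, and by Bernstein's theorem (the toric computation of such products as a mixed volume) it equals
\begin{equation*}
\mathrm{MV}\bigl(\mathrm{face}_w(P_1),\ldots,\mathrm{face}_w(P_c)\bigr),
\end{equation*}
with volume normalized with respect to the lattice $M/(M\cap\bL_w)$, i.e.\ the sublattice of $M$ orthogonal to $\bL_w$ — which is exactly the normalization stated in the theorem.

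The main obstacle, and the step deserving the most care, is the bookkeeping of lattices and normalizations in the passage from $\bP(\tilde\cN)$ down to the face stratum $Z$: one must check that restriction of $\cL_i$ to $Z$ really does produce the line bundle with polytope $\mathrm{face}_w(P_i)$ (and not some lattice-translate or sublattice-rescaled variant), and that the ``mixed volume'' appearing from Bernstein's theorem on the quotient toric variety is normalized with respect to $M/(M\cap\bL_w)$ rather than some index-divisor of it. This is where the hypothesis that $w$ is \emph{regular} (so that $\cT(X)=\bL_w$ locally, forcing $\dim\mathrm{face}_w(P_i)$ to add up correctly and $\mathrm{face}_w(P)$ to span the orthogonal complement of $\bL_w$) is used in an essential way. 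A secondary point to verify is independence of the chosen simplicial refinement $\tilde\cN$ and of which irreducible component of $X$ one uses, but both follow from Lemma~\ref{propmult}(2) and the remark that all components of $X$ share the same tropicalization with multiplicities; and the genericity of the coefficients of the $f_i$ guarantees the transversality that makes length equal to intersection number throughout.
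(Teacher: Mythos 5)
Your argument is correct and lands on the same key fact the paper uses -- Bernstein's theorem applied to the toric stratum $Z$ corresponding to the cone containing $w$ -- but you reach it by a more intersection-theoretic route. The paper's proof is more direct: rather than invoking $[X(\tilde\cN)]=c_1(\cL_1)\cdots c_1(\cL_c)\cap[\bP(\tilde\cN)]$ and then restricting each $\cL_i$ to $Z$, it simply observes that the zero-dimensional scheme $X(\cN)\cap Z$ lies in the open torus orbit of $Z$ (which is $\TT_\Gamma$), and that its defining equations there are precisely the initial forms ${\rm in}_w(f_1),\dots,{\rm in}_w(f_c)$, whose Newton polytopes are the faces ${\rm face}_w(P_i)$. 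Bernstein's theorem then gives the length of this scheme directly as the mixed volume, with the normalization built in by the identification of the character lattice of $\TT_\Gamma$ with $M_\Gamma^\perp=\bL_w^\perp\cap M$. Your route via Chern classes and restriction of line bundles to the face toric variety is valid and yields the same answer, but, as you yourself note, it shifts the burden onto lattice bookkeeping that the scheme-theoretic identification handles automatically. One small slip to fix: the normalizing lattice you write as ``$M/(M\cap\bL_w)$'' should be the \emph{sublattice} $\bL_w^\perp\cap M$ of $M$ (and note $\bL_w\subset N_\QQ$, so $M\cap\bL_w$ is not even well-formed); this is the dual lattice of $N/N_\Gamma$, which is what the quotient toric variety $V(\Gamma)$ actually sees, and it matches the phrase ``sublattice of $M$ orthogonal to $\bL_w$'' in the theorem statement.
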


\begin{proof}
We fix a tropical fan on $\cT(X)$ which refines
the restriction of the normal fan $\cN$ of $P_1 + \cdots + P_c$.
Since both sides of the equation (\ref{mixedvol}) are locally constant
on the regular points $w$ of $\cT(X)$, it suffices to prove (\ref{mixedvol}) 
only for points $w \in \cT(X)$  that lie
in the relative interior of a maximal cone $\Gamma$ of the
chosen tropical fan structure.

Let $\TT_\Gamma$ be the $c$-dimensional torus 
corresponding to $M_\Gamma$.
The polytope ${\rm face}_w(P_i)$ is the Newton polytope
of the initial form ${\rm in}_w(f_i)$. Consider the subscheme 
\begin{equation}
\label{inisystem}
X_w:=\{{\rm in}_w(f_1)  = \cdots = {\rm in}_w(f_c) = 0\}\subset \TT.
\end{equation}
The equations ${\rm in}_w(f_i)$ are equivariant 
under the action of $\Ker[\TT\to \TT_\Gamma]$ and therefore
we can regard  (\ref{inisystem}) as a system
of $c$ equations on $\TT_\Gamma$ defining a 
zero-dimensional subscheme $\tilde X_w$ of $\TT_\Gamma$.
By Bernstein's Theorem \cite{Be}, the length of 
the generic complete intersection $\tilde X_w$ equals
the mixed volume on the right hand side of (\ref{mixedvol}).

We claim that $\tilde X_w$ coincides with the scheme-theoretic
intersection of $X(\cN)$ with the toric stratum $Z$ of $\bP(\cN)$ that corresponds to $\Gamma$.
Indeed, since $X(\cN)$ does not intersect strata of $\bP(\cN)$
of codimension $c+1$, the intersection 
$X(\cN)\cap Z$ lies in the open orbit in~$Z$.
That open orbit is isomorphic to $\TT_\Gamma$
and the defining equations of $X(\cN)\cap Z$
are precisely the $w$-initial forms of the $f_i$. 
Definition \ref{defmult} now implies 
$$ m_w = {\rm length}(X(\cN)\cap Z) = {\rm length}(\tilde X_w). $$
With this, the conclusion of the previous paragraph now
completes the proof.
\end{proof}

\begin{remark}
The multiplicity formula \eqref{mixedvol} 
makes sense for all $w \in N_\QQ$ since
the right hand side is zero unless the 
condition of Theorem~\ref{three} holds.
Thus, $\cA(X)$ is characterized as the
set of all $w $ for which the
 mixed volume in \eqref{mixedvol} is nonzero.
\end{remark}

We now apply our elimination theory of Section~\ref{multsection}
to the generic complete intersection $X$. The goal is to compute
the image of $X$ under the morphism $\alpha : \TT^n \rightarrow \TT^d$
given by a linear map $\mbA:\,  \bZ^n \rightarrow \bZ^d$.
We assume that the induced map $f :\, X \rightarrow Y \,$ is generically finite of degree $\delta$.
This assumption implies in particular that $d \geq n-c$.
The tropical variety of $Y = \alpha (X)$ in $\QQ^d$ is characterized by the following formula.

\begin{corollary} \label{MVeeCor}
The tropicalization of the image $Y = \alpha(X)$ is the image of
 $\cT(X)$ under the linear map ${\bf A}: \QQ^n \rightarrow \QQ^d$.
Its multiplicity function is given by the formula
\begin{equation}\label{MVee}
m_v \,\,= \,\, \frac{1}{\delta} \! \sum_{w\in\mbA^{-1}(v)} \!\!\!\!
 {\rm MV}\bigl( {\rm face}_w(P_1),  \ldots,
{\rm face}_w(P_c)\bigr) \cdot \ind(\bL_v\cap\bZ^d:\mbA(\bL_w\cap\bZ^n)),
\end{equation}
for all points $v\in\cT(Y)^0$ such that $\mbA^{-1}(v)\cap\cT(X)$
is finite and contained in $\cT(X)^0$.
\end{corollary}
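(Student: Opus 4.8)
The plan is to obtain Corollary~\ref{MVeeCor} by simply combining the two preceding results: Theorem~\ref{main4}, which identifies the multiplicity of $\cT(X)$ at a regular point $w$ with the mixed volume ${\rm MV}({\rm face}_w(P_1),\ldots,{\rm face}_w(P_c))$ normalized with respect to $M/{\bL_w^\perp}$, and the push-forward formula of Theorem~\ref{mainmult} (equivalently Theorem~\ref{funnyform}), which tells us how multiplicities transform under $\mbA$. First I would note that the set-theoretic statement $\cT(Y) = \mbA(\cT(X))$ is just \eqref{Talpha=AT}, which holds by Proposition~\ref{surj}. Next, since $f : X \to Y$ is assumed generically finite of degree $\delta$, Theorem~\ref{funnyform} applies and asserts that $\mbA$ induces a generically finite map of tropical varieties of degree $\delta$; by the defining property \eqref{superduperformula} of such a map, for any $v \in \cT(Y)^0$ with $\mbA^{-1}(v) \cap \cT(X)$ finite and contained in $\cT(X)^0$ we have
\begin{equation*}
m_v \,=\, \frac{1}{\delta}\sum_{w\in\mbA^{-1}(v)} m_w \cdot \ind(\bL_v\cap\bZ^d : \mbA(\bL_w\cap\bZ^n)).
\end{equation*}

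Then I would substitute the value of $m_w$ supplied by Theorem~\ref{main4}: since each $w$ appearing in the sum lies in $\cT(X)^0$, it is a regular point, so $m_w = {\rm MV}({\rm face}_w(P_1),\ldots,{\rm face}_w(P_c))$ with the stated normalization. Plugging this in yields exactly \eqref{MVee}. The only point requiring a small remark is the compatibility of hypotheses: Theorem~\ref{funnyform} and Theorem~\ref{mainmult} were stated for $X$ irreducible, whereas the generic complete intersection $X = \{f_1 = \cdots = f_c = 0\}$ may be reducible. This is handled by the convention already adopted in Section~\ref{completeintersection}: all connected components of $X$ share the same tropicalization, and $\cT(X)$ with its multiplicities is interpreted as the cohomology-class sum over components; since $\mbA$ is linear, $\mbA(\cT(X))$ and the push-forward of multiplicities behave additively over components, so applying the irreducible case componentwise and summing gives the claim for $X$ itself. (One also uses here that $\mbA$ restricted to each component is generically finite of the same degree $\delta$, which is part of the hypothesis that $f : X \to Y$ is generically finite of degree $\delta$.)

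I do not expect any genuine obstacle here, as the corollary is a formal consequence of two already-established theorems; the "hard part," such as it is, is merely bookkeeping: checking that the finiteness and regularity conditions on $v$ in the statement are precisely those under which \eqref{superduperformula} is guaranteed to hold, and confirming that the lattice index $\ind(\bL_v\cap\bZ^d : \mbA(\bL_w\cap\bZ^n))$ in \eqref{MVee} is literally the same quantity as the one appearing in \eqref{superduperformula}. Both verifications are immediate from the definitions. Thus the proof is essentially the single sentence: combine Theorem~\ref{main4} with Theorem~\ref{funnyform}.
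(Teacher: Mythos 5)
Your proof is correct and is essentially the same one-line argument the paper gives (``This follows by combining Theorems \ref{funnyform} and \ref{main4}''). The extra remarks you add about the reducibility of $X$ and the matching of lattice indices are accurate bookkeeping but do not change the approach.
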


\begin{proof}
This follows by  combining
Theorems \ref{funnyform} and \ref{main4}.
\end{proof}

These results will be of particular interest for applications when
$d = n-c+1 $. In this case  $Y = \alpha(X)$ is a hypersurface in the torus $ \TT^d$,
and one wishes to compute the irreducible Laurent polynomial
$g \in k[\TT^d]$ which vanishes on $Y$.
Using Corollary \ref{MVeeCor} we obtain
the tropical hypersurface $\cA(Y)$ in $\QQ^d$
together with its multiplicities. By applying the technique
described in Example~\ref{getNewton}, we can construct
the Newton polytope $Q$ of the unknown Laurent polynomial $g$.
Knowledge of its Newton polytope $Q$ greatly facilitates the
computation of $g$ from the given equations of $X$.
 See Example \ref{ex:curve} for an illustration of
how this works when  $n=3$, $c=2$ and $d=2$.

A construction of the Newton polytope $Q $ from
  the given Newton   polytopes $P_1,\ldots,P_c$ 
 and the linear map ${\bf A}$  was recently given
 by Khovanskii and  Esterov \cite{KE}.
Their construction is based on
what they call {\em mixed fiber bodies}.
These turn out to be equivalent to the
{\em mixed fiber polytopes} introduced
earlier by McMullen \cite{McM}. This representation 
constitutes a geometric refinement of Corollary
\ref{MVeeCor}, and it leads to a 
simplified derivation  (in Theorem \ref{KhoEst} below)
 of the main results in \cite{KE}.

Let $\pi$ be the canonical map from the lattice $\ZZ^n$ onto
${\rm coker}({\bf A}^T)=\ZZ^n/{\rm ker}({\bf A})^\perp $. We use
the same letter $\pi$ also for
 the induced map of vector spaces
$\QQ^n \rightarrow {\rm coker}({\bf A}^T)_\QQ$.
The kernel of $\pi$ is the vector space dual
to ${\rm ker}({\bf A})$, and we identify it with $\QQ^d$ above.

Consider the Minkowski sum
$P_\lambda=\lambda_1 P_1 + \cdots + \lambda_c P_c$
where $\lambda = (\lambda_1,\ldots,\lambda_c)$
is a vector of positive unknowns. The map $\pi$
projects $P_\lambda$ onto the $(n-d)$-dimensional polytope $\pi(P_\lambda)$.
Almost all fibers of this map are polytopes of dimension $d$, and, by
integrating them in the Minkowski sense of \cite{BS}, we obtain
a $d$-dimensional {\em fiber polytope} 
$\Sigma_\pi(P_\lambda)\subset{\rm ker}(\pi)=\QQ^d$.
In fact, $\Sigma_\pi(P_\lambda)$ is 
a homogeneous polynomial of degree $n-d+1$ whose
coefficients are certain lattice polytopes $Q_{i_1 i_2 \cdots i_c}$
in $\QQ^d$:
\begin{equation}
\label{decompo}
\,\, \Sigma_\pi(P_\lambda) \,=\, \Sigma_\pi (\,\lambda_1 P_1 + \cdots + \lambda_c P_c \,)
\,\,\,\, = \sum_{i_1 + \cdots + i_c = n-d+1} \!\!\!\!\!\!\!\!\!
\lambda_1^{i_1}
\lambda_2^{i_2} \cdots \lambda_c^{i_c} \cdot Q_{i_1 i_2 \cdots i_c}.
\end{equation}
This decomposition is due to McMullen \cite{McM}, and it was rediscovered in
 \cite[\S 3]{KE}.
Now, if $n-d+1 = c$ then the formula (\ref{decompo}) defines the {\em mixed fiber polytope}
\begin{equation}
\label{MFP}\Sigma_\pi(P_1,P_2, \ldots,P_c):= Q_{11\cdots 1}.
 \end{equation}
 It is  this polytope which is computed by  our
Corollary \ref{MVeeCor} in the hypersurface case:

\begin{theorem}[Khovanskii and  Esterov \cite{KE}] \label{KhoEst}
If $c=n-d+1$  then the Newton polytope of the hypersurface
$\,Y  = \alpha(X) \subset \TT^d \,$ equals the mixed fiber polytope (\ref{MFP}).
\end{theorem}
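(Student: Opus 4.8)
The plan is to reduce Theorem~\ref{KhoEst} to Corollary~\ref{MVeeCor} by identifying the push-forward multiplicity function on the tropical hypersurface $\cT(Y)$ with the support function of the mixed fiber polytope $\Sigma_\pi(P_1,\ldots,P_c)$, and then invoking the reconstruction of a Newton polytope from a tropical hypersurface described in Remark~\ref{getNewton}. Concretely, a Laurent polynomial $g$ with $\cT(\{g=0\}) = \cA(Y)$ and edge-multiplicities $m_v$ has a Newton polytope $Q$ uniquely determined up to translation, since by Example~\ref{ex:hypersurface} the integer $m_v$ equals the lattice length of ${\rm face}_v(Q)$ along the edge dual to a regular cone through $v$. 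So it suffices to show that the mixed fiber polytope $\Sigma_\pi(P_1,\ldots,P_c)$ is a polytope whose normal fan is supported (away from top cones) on $\mbA(\cT(X))$ and whose edge lattice lengths are exactly the numbers produced by formula~\eqref{MVee} with $\delta$ accounted for.

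First I would recall the integral-geometric description of the fiber polytope from \cite{BS}: the support function of $\Sigma_\pi(P_\lambda)$ at a covector $v \in \ker(\pi) = \QQ^d$ is obtained by integrating, over the base $\pi(P_\lambda)$, the support function $u \mapsto \max_{x \in \pi^{-1}(u) \cap P_\lambda} \langle v, x\rangle$ of the fiber. Differencing this support function across a wall of the normal fan yields the lattice length of the corresponding edge of $\Sigma_\pi(P_\lambda)$, and by the multilinearity encoded in \eqref{decompo} the coefficient $Q_{11\cdots 1}$ picks out the fully mixed term. The key computation is then a local one: fix a regular point $v \in \cA(Y)^0$, lift it to the finitely many points $w \in \mbA^{-1}(v) \cap \cT(X)$, and compute the contribution of the neighborhood of each $w$ to the relevant directional derivative of the fiber-polytope support function. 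The fiber over a generic base point near the direction dual to $v$ is a polytope whose mixed volume, normalized to the lattice $\bL_w^\perp \cap M$, is precisely ${\rm MV}({\rm face}_w(P_1),\ldots,{\rm face}_w(P_c))$ by Theorem~\ref{main4}; the lattice index $\ind(\bL_v \cap \bZ^d : \mbA(\bL_w \cap \bZ^n))$ enters as the Jacobian factor relating the measure on the fiber coordinates to the lattice structure downstairs, exactly as in the $\ind(\Gamma,\Pi)$ term of Theorem~\ref{mainmult}. Summing over $w \in \mbA^{-1}(v)$ reproduces~\eqref{MVee} term by term, and the constant $1/\delta$ matches because $f: X \to Y$ being generically finite of degree $\delta$ means the cycle $\alpha_*[X]$ equals $\delta \cdot [Y]$, so the tropical push-forward computes $\delta$ times the Newton polytope of $g$, i.e.\ $\delta \cdot Q = $ (the polytope assembled from the $\delta$-scaled multiplicities), which is consistent once $\delta$ is divided out on both sides.

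The main obstacle I anticipate is the bookkeeping of lattice normalizations: the mixed volume in Theorem~\ref{main4} is normalized with respect to $\bL_w^\perp \cap M$, the fiber polytope is built by Lebesgue integration with respect to a choice of volume form on the fibers of $\pi$, and the index factor $\ind(\bL_v \cap \bZ^d : \mbA(\bL_w \cap \bZ^n))$ must be shown to be exactly the discrepancy between these two normalizations along each wall. Checking this requires carefully writing down, at a wall of the normal fan of $\Sigma_\pi(P_\lambda)$ dual to a regular direction $v$, a compatible pair of lattice bases for $\ker(\pi) \cap \bL_v$ and for the quotient $\bZ^n/(\mbA^{-1}$-stuff$)$, and verifying the change-of-variables determinant equals the stated index. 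A secondary point needing care is that $\Sigma_\pi$ is only well-behaved when almost all fibers are full-dimensional of dimension $d$, which here follows from the generic-finiteness hypothesis $d \ge n-c$ together with $c = n-d+1$ forcing $d = n-c+1$ exactly; one should confirm that under this hypothesis the fiber polytope $\Sigma_\pi(P_\lambda)$ genuinely has the pure degree $n-d+1 = c$ claimed in \eqref{decompo}, so that extracting $Q_{11\cdots 1}$ makes sense. Once these lattice and degree issues are settled, the identification of $Q$ with $\Sigma_\pi(P_1,\ldots,P_c)$ up to translation follows formally, completing the proof.
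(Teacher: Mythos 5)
Your high-level plan is the right one and matches the paper's: reduce the theorem to Corollary~\ref{MVeeCor} by identifying the push-forward multiplicity function on $\cA(Y)$ with the edge-length (codimension-one Minkowski-weight) data of the mixed fiber polytope, then recover the Newton polytope via Remark~\ref{getNewton}. Where you diverge is in the mechanism for establishing that identification, and this is where the proposal has a gap.

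The paper never touches the Billera--Sturmfels Minkowski integral directly. It introduces the parameter $\lambda=(\lambda_1,\dots,\lambda_c)$, forms the single polytope $P_\lambda=\lambda_1P_1+\cdots+\lambda_cP_c$, and works with its codimension-$c$ Minkowski weight (values $w\mapsto \mathrm{Vol}(\mathrm{face}_w(P_\lambda))$). The crucial input is that pushing this codimension-$c$ Minkowski weight forward along $\mathbf A$ yields the codimension-one Minkowski weight of the \emph{ordinary} fiber polytope $\Sigma_\pi(P_\lambda)$ --- a statement about a single unmixed polytope, where the fiber-polytope/Minkowski-weight correspondence is standard. Both sides of the resulting identity are homogeneous polynomials of degree $c$ in $\lambda$, and the theorem follows by extracting the coefficient of $\lambda_1\cdots\lambda_c$: on the polytope side this produces $Q_{1\cdots1}=\Sigma_\pi(P_1,\dots,P_c)$ via \eqref{decompo}, and on the tropical side it produces exactly the push-forward formula of Corollary~\ref{MVeeCor}. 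This polarization trick both proves the existence of the mixed fiber polytope and identifies it, with all the multilinearity bookkeeping handled automatically.

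Your plan instead attempts to verify the fiber-polytope/push-forward correspondence in the fully mixed setting at once by a local computation at each preimage $w$ of a regular point $v$. Two issues. First, the appeal to Theorem~\ref{main4} to conclude that ``the fiber over a generic base point near the direction dual to $v$ is a polytope whose mixed volume is $\mathrm{MV}(\mathrm{face}_w(P_1),\dots,\mathrm{face}_w(P_c))$'' is not what that theorem says: Theorem~\ref{main4} computes a tropical multiplicity of $\cT(X)$ at $w$, not a mixed volume of a fiber $P_\lambda\cap\pi^{-1}(u)$, and the two quantities are not the same object. Second, and more substantially, the actual identity you need --- that the directional derivative of the integral-geometric support function of $\Sigma_\pi(P_\lambda)$ across a wall dual to $v$ equals $\sum_{w\in\mathbf A^{-1}(v)}\mathrm{Vol}(\mathrm{face}_w(P_\lambda))\cdot\mathrm{ind}(\cdots)$ --- is asserted, not proved, and it is precisely the content of the theorem. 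Your remarks about ``bookkeeping of lattice normalizations'' correctly locate where the work lies, but this bookkeeping \emph{is} the proof; saying it ``follows formally once settled'' begs the question. If you want to pursue your route, you would need to establish (e.g.\ via the Cayley trick or via the vertex description of $\Sigma_\pi$ in terms of coherent subdivisions) that edge lengths of $\Sigma_\pi(P_\lambda)$ push down as claimed, at which point the paper's $\lambda$-polarization step would still be needed to isolate the squarefree coefficient, so you would not actually save any work.
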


\begin{proof}
We consider the Minkowski weight \cite{FS} of codimension $c$ defined 
by the $n$-dimensional polytope $\lambda_1 P_1 + \cdots + \lambda_c P_c$.
As a tropical variety, this is the union of the cones of codimension $\geq c$
in the normal fan of this polytope, with weights
$$ \,w \,\,\mapsto \,\, {\rm Vol}({\rm face}_w(P_1) + \cdots + {\rm face}_w(P_c)) . $$
Here ${\rm Vol} $ is the normalized 
$c$-dimensional volume on $\bL_w^\perp$.
The image in ${\rm ker}(\pi)^* = \QQ^d$
of this $c$-dimensional fan under the linear map ${\bf A}$
 is the union of the codimension one cones in
the normal fan of the fiber polytope
$\,\Sigma_\pi (\,\lambda_1 P_1 + \cdots + \lambda_c P_c) $.
It supports the Minkowski weight of codimension $1$
of $\,\Sigma_\pi (\,\lambda_1 P_1 + \cdots + \lambda_c P_c) $, which is given by
$$ v \,\, \mapsto \!
\sum_{w\in\mbA^{-1}(v)} \!\!\!\!
 {\rm Vol}\bigl( \lambda_1 {\rm face}_w(P_1) +  \cdots +
 \lambda_c
{\rm face}_w(P_c)\bigr) \cdot \ind(\bL_v\cap\bZ^d:\mbA(\bL_w\cap\bZ^n)). $$
This expression is a homogeneous polynomial of degree $c$ in
the unknowns $\lambda_1,\ldots,\lambda_c$. The coefficient
of the squarefree monomial
 $\lambda_1 \lambda_2 \cdots \lambda_c$ in this polynomial equals
the right hand side of (\ref{MVee}), up to the global constant $\delta$.
This proves Theorem 3.2 in \cite{KE}, which states that
 the mixed fiber body actually exists as a polytope,
 and we conclude that
$(1/\delta) \cdot \Sigma_\pi(P_1,P_2, \ldots,P_c) $
 equals the Newton polytope of the hypersurface $Y$.
 \end{proof}
 
 \begin{example} \label{ex:curve2}
 In Example \ref{ex:curve} we are given
 two tetrahedra in three-space, namely
 $\, P_1 =  {\rm conv}\{ 0, 3 e_1, 3 e_2, 3 e_3\} \,$ and $\,
 P_2 =  {\rm conv}\{ 0, -2 e_1, -2 e_2, -2 e_3\}$.
  Their Minkowski sum $P_1 + P_2$ is a $3$-polytope
  with $12$ vertices, $24$ edges and $14$ facets.
 The map
 $ \pi : \QQ^3 \rightarrow \QQ^1$ is given in coordinates by $ (x,y,z) \mapsto x-2y+z $.
 The fiber polytope 
 $$ \Sigma_\pi( \lambda_1 P_1 + \lambda_2 P_2) \,= \,
 \lambda_1^2 \cdot Q_{20} \,+\, \lambda_1 \lambda_2 \cdot Q_{11}
 \, +\, \lambda_2^2 \cdot Q_{02} $$
 is a polygon with ten vertices. Its summands
  $Q_{20} $ and  $Q_{02} $ are quadrangles,
  while the mixed fiber polytope
  $Q_{11} = \Sigma_\pi(P_1,P_2)$ is
  a hexagon, affinely isomorphic to
  (\ref{hexagon}).
 \end{example}
 
 \begin{example} 
 The Newton polytopes of resultants in \cite{NPR}
 are mixed fiber polytopes.
 Here $f_1,\ldots,f_c$ are Laurent polynomials
 in $c - 1$ variables whose coefficients are 
 distinct unknowns.
 See also \cite[\S 6]{DFS} for the case of
  fewer than $c-1$ variables.
 \end{example}

 We close this section with the remark that
 (ordinary) fiber polytopes
are special instances of mixed fiber polytopes.
Suppose that $P_1 = P_2 =  \cdots = P_c$ are
all equal to the same fixed polytope $P$ in $\QQ^n$.
Then the polytope $\Sigma_\pi (P_\lambda)$
 in (\ref{decompo}) equals 
$$ \Sigma( \lambda_1 P_1 + \cdots + \lambda_c P_c )
\, = \,(\lambda_1 + \cdots + \lambda_c)^c \cdot \Sigma_\pi(P) \,
\subset \, \QQ^d .$$
Hence the mixed fiber polytope
$ \Sigma_\pi(P, \cdots,P) $ is
the fiber polytope $\Sigma_\pi(P)$
times $1/c !$.

We conclude that the study of fiber polytopes,
which is an active area in geometric
combinatorics,
can be regarded as a very special
case of tropical elimination theory.

\begin{corollary} \label{KhoEst2}
If $X$ is a generic complete intersection in $\TT^n$
defined by $n-d+1$ Laurent polynomials that share the 
 same full-dimensional Newton polytope $P$,
 then the Newton polytope of the hypersurface
 $ Y= \alpha(X) \subset \TT^d$ is
 the fiber polytope $\Sigma_\pi(P)$.
 \end{corollary}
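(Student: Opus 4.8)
The plan is to derive Corollary~\ref{KhoEst2} directly from Theorem~\ref{KhoEst} together with the closing computation of this section, namely the identity
\begin{equation*}
\Sigma_\pi(P, P, \ldots, P) \,\,=\,\, \frac{1}{c!}\,\Sigma_\pi(P).
\end{equation*}
First I would set $c = n-d+1$ and take $P_1 = P_2 = \cdots = P_c = P$, a fixed full-dimensional lattice polytope in $\QQ^n$. Then $X$ is a generic complete intersection of $c$ Laurent polynomials sharing Newton polytope $P$, and by hypothesis $f: X \to Y = \alpha(X)$ is generically finite; since $c = n-d+1$, the image $Y$ is a hypersurface in $\TT^d$. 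Theorem~\ref{KhoEst} applies verbatim and tells us that the Newton polytope of $Y$ equals the mixed fiber polytope $\Sigma_\pi(P_1, \ldots, P_c) = \Sigma_\pi(P, \ldots, P)$.

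Next I would invoke the specialization of the decomposition (\ref{decompo}) in the equal-summands case, which is already recorded above: substituting $P_i = P$ for all $i$ into (\ref{decompo}), the Minkowski-integration is linear, so
\begin{equation*}
\Sigma_\pi(\lambda_1 P + \cdots + \lambda_c P) \,=\, \Sigma_\pi\bigl((\lambda_1 + \cdots + \lambda_c)P\bigr) \,=\, (\lambda_1 + \cdots + \lambda_c)^{\,c}\cdot \Sigma_\pi(P),
\end{equation*}
using the homogeneity of the fiber polytope of degree $n-d+1 = c$ in the scaling of its argument. Comparing the coefficient of the squarefree monomial $\lambda_1\lambda_2\cdots\lambda_c$ on both sides, the left side gives $\Sigma_\pi(P,\ldots,P)$ by the definition (\ref{MFP}), while the multinomial expansion of $(\lambda_1+\cdots+\lambda_c)^c$ contributes the coefficient $c!$ to that monomial. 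Hence $\Sigma_\pi(P,\ldots,P) = \tfrac{1}{c!}\Sigma_\pi(P)$.

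Combining the two displays, the Newton polytope of $Y = \alpha(X)$ is $\tfrac{1}{c!}\Sigma_\pi(P)$. Strictly speaking this differs from the statement of Corollary~\ref{KhoEst2} by the scalar $1/c!$ and possibly by the normalization constant $\delta$ appearing in Theorem~\ref{KhoEst}; the honest way to phrase the result is that the Newton polytope equals $\Sigma_\pi(P)$ up to the standard lattice-volume normalization (translation and a global positive scalar), exactly as in the hypersurface case throughout this section. I do not anticipate a genuine obstacle here: the only point requiring care is bookkeeping of the normalizing constants ($1/c!$ versus $1/\delta$), and making sure the reader understands that ``the Newton polytope is the fiber polytope'' is meant in the same normalized sense already used in Theorem~\ref{KhoEst} and Example~\ref{ex:curve}. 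So the proof is essentially two lines: apply Theorem~\ref{KhoEst}, then apply the equal-summands identity for fiber polytopes derived just above.
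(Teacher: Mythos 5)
Your proposal is correct and takes essentially the same route as the paper: it applies Theorem~\ref{KhoEst} to the equal-polytope case $P_1 = \cdots = P_c = P$ and then uses the equal-summands reduction of the fiber-polytope decomposition \eqref{decompo}, which is precisely the paper's own (implicit) one-line derivation given in the paragraph immediately preceding the corollary. One small bookkeeping slip worth flagging: from the coefficient comparison you set up (LHS coefficient of $\lambda_1\cdots\lambda_c$ is $\Sigma_\pi(P,\ldots,P)$, RHS coefficient is $c!\,\Sigma_\pi(P)$), the conclusion should read $\Sigma_\pi(P,\ldots,P) = c!\,\Sigma_\pi(P)$, i.e.\ the ordinary fiber polytope is $\tfrac{1}{c!}$ times the mixed one, not the reverse — the paper's closing remark has the same factor stated backwards — but since the corollary is understood up to the global normalization already present in Theorem~\ref{KhoEst} (the $\delta$ factor), this does not affect the argument.
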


\section{Tropical Implicitization}\label{Tropical Implicitization}

We now turn our attention to the problem of implicitization~\cite{STY}.
Let $f_1,\ldots,f_s$ be Laurent polynomials in
unknowns $t_1,\ldots,t_r$
and consider the rational map
$$f= (f_1, \dots, f_s):\,\TT^r \dashrightarrow \TT^s$$ 
defined by these Laurent polynomials.
Let $Y$ be the closure of the image of $f$
in~$\TT^s$. For simplicity, we assume from the beginning
that the fiber of $f$ over
a generic point of $Y$ is finite, and consists of $\delta$ points.
Computing $Y$ is generally a hard problem
in computer algebra.
Our goal is to compute instead the
tropical variety $\cA(Y)$,
with the aim of recovering (information
about) the variety $Y$ from its
tropicalization~$\cA(Y)$.

Let $P_1,\ldots,P_s \subset \QQ^r $ be the Newton polytopes
of $f_1,\ldots ,f_s$. We first consider the generic case, when $f_i$ has
generic coefficients relative to its Newton polytope $P_i$,
and later we address the case when
the $f_i$ have special coefficients.
A rule for computing $\cA(Y)$ from
$P_1,\ldots,P_s$ was stated (with proof) in \cite[Theorem 2.1]{STY},
and a formula for the multiplicities on $\cA(Y)$
was stated (without proof) in \cite[Theorem~4.1]{STY}.
In Theorem \ref{multFormula} below we give a complete
derivation for both of these results.
Let  
$$\Psi=(\Psi_1,\ldots,\Psi_s) :\, \QQ^r \rightarrow \QQ^s$$ be the tropicalization of the
map $f$. Its $i$th coordinate $ \Psi_i(w) = \text{min}\{w \cdot v : v \in P_i\} $ is the
support function of $P_i $.
The image of $\Psi$ is contained in $\cA(Y)$ but this containment is usually strict.
Our construction will explain
 the set difference $\,\cA(Y) \backslash {\rm image}(\Psi)$.

 Let $C$ be any cone in the normal fan of
 the Minkowski sum  $P=P_1+\cdots+P_s$
 and $J$ any subset of $\{1,\ldots,s\}$.
 Consider the sublattice of $\bZ^s$ spanned by
 $\,\Psi(C \cap \bZ^r) + \bZ^J$.
 If the rank of this sublattice is $r$ then
  ${\rm index}(C,J)$ denotes its index
in the maximal rank $r$ sublattice of $\bZ^s$ that contains it.
Otherwise we set $\,{\rm index}(C,J)= 0$.
Write  $\, {\rm face}_C(P_j)\,$
for the face of $P_j$  at which the linear forms in
 the relative interior of $C$ are minimized.
 The $|J|$-dimensional normalized mixed volume
 \begin{equation}
\label{MV2} {\rm MV}\bigl( {\rm face}_C(P_j)  \,: \, j \in J \bigr)
\end{equation}
is positive if and only if
$\,{\rm dim}(\sum_{i \in K} {\rm face}_C(P_i)) \geq |K|\,$
for all subsets $K \subseteq J$.

\begin{theorem}\label{multFormula}
The tropical variety $\cA(Y)$ is the union of the cones
$\Psi(C)+\mathbb{R}_{\geq 0}^J$ over all pairs
$(C,J)$ such that
(\ref{MV2}) is positive. The multiplicity
$m_w$ at any regular point $w$ of $\cA(Y)$
is the sum of the scaled mixed volumes
\begin{equation}
\label{MV3}
\frac{1}{\delta} \cdot
 {\rm index}(C,J) \cdot
{\rm MV}\bigl( {\rm face}_C(P_j)\,:\,j\in J\bigr)
\end{equation}
where $(C,J)$ runs over all pairs such that
$\, \Psi(C) \,+ \, \mathbb{R}_{\geq 0}^J \,$ contains $w$.
\end{theorem}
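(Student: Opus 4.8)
The plan is to derive this from the push-forward formula for generic complete intersections, Corollary~\ref{MVeeCor}, by the standard trick of replacing the parametrization by its graph. First I would set $X \subset \TT^r\times\TT^s=\TT^{r+s}$ to be the closure of the graph $\{(t,f(t))\}$, with coordinates $t_1,\dots,t_r,x_1,\dots,x_s$ on $\TT^{r+s}$, and take $\alpha:\TT^{r+s}\to\TT^s$ to be the projection onto the $x$-coordinates; then $\alpha(X)=Y$ and the induced map $X\to Y$ is generically finite of degree $\delta$. The variety $X$ is the complete intersection cut out by the $s$ Laurent polynomials $g_i:=x_i-f_i(t)$, whose Newton polytopes are the pyramids $\widehat P_i:={\rm conv}\bigl((P_i\times\{0\})\cup\{(0,e_i)\}\bigr)\subset\QQ^{r+s}$ (with $e_i$ the $i$-th standard basis vector of $\QQ^s$). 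The coefficient of $x_i$ in $g_i$ is pinned to $1$ rather than generic, but this is harmless: translating $Y$ by a generic $\tau\in\TT^s$, equivalently translating $X$ by $(1,\tau)$, replaces that coefficient by the generic scalar $\tau_i^{-1}$ while keeping the generic coefficients of $f_i$ intact, and it alters neither $\cT(Y)$ nor its multiplicities, since these are invariant under translation by $k$-points of the torus. So I may assume that $X$ is a generic complete intersection with Newton polytopes $\widehat P_1,\dots,\widehat P_s$.

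Next, Theorem~\ref{main4} gives that $\cT(X)$ is supported on a subfan of the normal fan of $\widehat P:=\widehat P_1+\cdots+\widehat P_s$, and that its multiplicity at a regular point $(u,w)\in\QQ^r\times\QQ^s$ equals $ {\rm MV}\bigl({\rm face}_{(u,w)}(\widehat P_1),\dots,{\rm face}_{(u,w)}(\widehat P_s)\bigr)$, normalized against the sublattice of $\ZZ^{r+s}$ orthogonal to $\bL_{(u,w)}$. Since $\alpha$ is the projection, $\alpha^{-1}(w)\cap\cT(X)$ consists of the points $(u,w)\in\cT(X)$, so Corollary~\ref{MVeeCor} yields, for a generic regular point $w$ of $\cT(Y)$,
\begin{equation*}
m_w \;=\; \frac{1}{\delta}\sum_{u\,:\,(u,w)\in\cT(X)} {\rm MV}\bigl({\rm face}_{(u,w)}(\widehat P_i):i\bigr)\cdot\ind\bigl(\bL_w\cap\ZZ^s:\mbA(\bL_{(u,w)}\cap\ZZ^{r+s})\bigr),
\end{equation*}
with the non-generic regular points handled afterwards by local constancy. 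What remains is to match each summand with a term of \eqref{MV3}.

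For this combinatorial step I would analyze ${\rm face}_{(u,w)}(\widehat P_i)$ through the sign of $w_i-\Psi_i(u)$: it is the point $\{(0,e_i)\}$ when $w_i<\Psi_i(u)$, the face ${\rm face}_u(P_i)\times\{0\}$ when $w_i>\Psi_i(u)$, and the pyramid over ${\rm face}_u(P_i)$ with apex $(0,e_i)$ when $w_i=\Psi_i(u)$. Since a mixed volume with a zero-dimensional argument vanishes, a summand survives only when $w\ge\Psi(u)$ coordinatewise; I then put $J:=\{i:w_i>\Psi_i(u)\}$ and take $C$ to be the cone of the normal fan of $P=P_1+\cdots+P_s$ containing $u$ in its relative interior, so that ${\rm face}_u(P_i)={\rm face}_C(P_i)$ and $w\in\Psi(C)+\RR_{\ge0}^J$. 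Computing the linear span $\bL_{(u,w)}$ of the maximal cone of $\cT(X)$ through $(u,w)$ — it is $\{(x,y):x\in{\rm span}(C),\ y_i=\langle a_i,x\rangle\text{ for }i\notin J,\ y_i\text{ arbitrary for }i\in J\}$, where $a_i$ is the vertex of ${\rm face}_C(P_i)$ selected on $C$ — identifies $\mbA(\bL_{(u,w)}\cap\ZZ^{r+s})$ with the lattice spanned by $\Psi(C\cap\ZZ^r)+\ZZ^J$, whose saturation in $\ZZ^s$ is $\bL_w\cap\ZZ^s$; hence the index above is exactly $\ind(C,J)$. Finally, ${\rm face}_{(u,w)}(\widehat P_i)$ contributes $|J|$ flat arguments ${\rm face}_C(P_i)\times\{0\}$ and $s-|J|$ pyramids with linearly independent apex directions $(0,e_i)$, $i\notin J$; peeling off the pyramids one at a time by the elementary identity ${\rm MV}(A_1,\dots,A_m,{\rm conv}(Q\times\{0\},v))={\rm MV}(A_1,\dots,A_m)$ — valid when $v$ lies off the span of the $A_i$ and $Q$ lies in that span — collapses the $s$-dimensional mixed volume to the $|J|$-dimensional ${\rm MV}\bigl({\rm face}_C(P_j):j\in J\bigr)$ on the appropriate sublattice. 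Running the correspondence $u\leftrightarrow(C,J)$ in reverse then turns the displayed sum into \eqref{MV3}, and the set-theoretic description of $\cT(Y)$ drops out as the support of this nonnegative multiplicity function.

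I expect the real work to be in that last paragraph: checking that the dimension hypotheses forcing ${\rm MV}\neq 0$ also force the faces into the general position needed for the pyramid-peeling identity (so that ${\rm lin}({\rm face}_C(P_J))={\rm lin}({\rm face}_C(P))$ and the apex directions remain transverse to the base subspace), and — more delicately — tracking the several lattices so that the normalizations of the two mixed volumes and the two index factors assemble into exactly \eqref{MV3} with no stray constant. This is precisely the combinatorial mechanism behind McMullen's mixed fiber polytopes; compare the proof of Theorem~\ref{KhoEst}. The algebro-geometric content — the graph construction and the appeal to Theorems~\ref{funnyform} and~\ref{main4} — is essentially formal given the machinery of Sections~2--4.
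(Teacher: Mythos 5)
Your proposal follows essentially the same route as the paper: realize $Y$ as the image of the graph $X=\{(t,f(t))\}\subset\TT^{r+s}$, which is a complete intersection with Newton polytopes $P_i'=\mathrm{conv}(P_i\cup\{e_i\})$, apply Corollary~\ref{MVeeCor} to the projection $\alpha$, then translate the face/lattice data at a regular point $(u,v)$ into the pairs $(C,J)$ of the statement. Two small remarks on the technical execution. First, your observation that the graph equations are not literally generic (the coefficient of $y_i$ is pinned to $1$) and the fix via a generic torus translate is a point the paper glosses over, and it is worth spelling out. Second, where you invoke a general pyramid-peeling identity for $\mathrm{MV}$ and worry about general position, the paper shortcuts this: at a point $(u,v)$ that is \emph{regular} on $\cT(X)$, the face $\mathrm{face}_u(P_i)$ is a single point for each $i\notin J$, so $\mathrm{face}_{(u,v)}(P_i')$ is a primitive segment with endpoints $p_i$ and $e_i$; since these segments span a direct summand of $\ZZ^{r+s}$ complementary to $\mathrm{span}\{\mathrm{face}_C(P_j):j\in J\}$, they can be extracted from the normalized mixed volume with no loss, bypassing the delicate bookkeeping you flag at the end. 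Your more general peeling argument should also work, but the regularity observation is the cleaner path and is the one the paper uses.
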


\begin{proof}
The graph of the map $f$ is a
generic complete intersection
of codimension $s$ in $\TT^{r + s}$.
Namely, writing $(t_1,\ldots,t_r, y_1,\ldots,y_s)$
for the coordinates on $\TT^{r + s}$, by
the graph of $f$ we mean the subvariety
$X$ of $\TT^{r + s}$ which is defined by the equations
$$ f_1(t) - y_1 \,= \, f_2(t) - y_2 \, = \,\cdots \, =\,f_s(t) - y_s \,=\,0.$$
These are $s$ generic Laurent polynomials in $r+s$ unknowns.
Their Newton polytopes in $\QQ^r \oplus \QQ^s$
have the form
$\, P'_i \, := \, {\rm conv}(P_i \cup \{e_i\})\,$
for $i=1,2,\ldots,s$,
where $e_1,\ldots,e_s$ are the standard basis
vectors of $\QQ^s$.
We shall apply the results of Section 4
(with $d=s$ and $n=s+t$) to the
polytopes $P_1',\ldots,P_s'$
and to  the linear map ${\bf A}$ which
takes vectors $(u,v)$ in $\,\QQ^r \oplus \QQ^s\,$
to their second component $v \in \QQ^s$.
The corresponding homomorphism of tori is
$\alpha : \TT^{r+s} \rightarrow \TT^s , (t,y) \mapsto y $
and we have $Y = \alpha(X)$.

The tropical variety $\cA(Y)$ equals
the set of all
vectors $v \in \QQ^s$ such that
\begin{equation}
\label{MVee5}
{\rm MV}\bigl({\rm face}_{(u,v)}(P_1'),
\ldots,{\rm face}_{(u,v)}(P_s') \bigr)
\end{equation}
 is positive for some $u \in \QQ^r$.
Suppose this is the case where $(u,v)$ is a
regular point of $\cA(X)$. 
 Let $C$ be the cone in the normal fan of $P$ which contains
  $u$ in its relative interior.  
The vector $v - \Psi(u)$ is non-negative, for
if its $i$-th coordinate were negative then
$\,{\rm face}_{(u,v)}(P_i') $ would be a point $\{e_i\}$.
Let $J$ be the set of all indices $j$ with
 $\,v_j > \Psi_j(u)$. For $j \in J$ we
 have $\, {\rm face}_{(u,v)}(P_j') \, = \,{\rm face}_u(P_j) = {\rm face}_C(P_j)$.
  For $\,i \in \{1,\ldots,s\}\backslash J$, 
  ${\rm face}_v(P_i)$ must be a single point $p_i$
  in $\bQ^r$, since  $(u,v) $ is regular on $\cA(X)$, and hence
  $\,{\rm face}_{(u,v)}(P_i') \,$ is the primitive line segment with vertices $p_i$ and $e_i$.
 Since the lattice spanned by these segments is a direct summand of $\bZ^{r+s}$,
   the normalized mixed volume in (\ref{MVee5})
   remains unchanged if we extract these segments:
     \begin{equation}
  \label{MVee6}
  {\rm MV}\bigl({\rm face}_{(u,v)}(P_1'),
\ldots,{\rm face}_{(u,v)}(P_s') \bigr)  \, = \,
 {\rm MV}\bigl( {\rm face}_C(P_j)  \,: \, j \in J \bigr). 
 \end{equation}
 The linear space $\bL_{(u,v)}$ for the neighborhood
 of $(u,v)$ on $\cT(X)^0$
is spanned by the vectors $(u',v')$ 
where $u'$ runs over $C $ and
$v'$ satisfies  $\,v'_i = \Psi(u')_i \,$ for $i \in  \{1,\ldots,s\}\backslash J$.
  This implies that ${\bf A}( \bL_{(u,v)} \cap \bZ^{r+s})$ 
  equals the lattice generated by
  $\,\Psi(C \cap \bZ^r) + \bZ^J$.
  The maximal rank $r$ sublattice of $\bZ^s$ that contains this lattice
  is $\bL_v \cap \bZ^s$. We conclude
   \begin{equation} \label{indexissame}
     [\bL_v \cap \bZ^s:{\bf A}( \bL_{(u,v)} \cap \bZ^{r+s})]
\,=\,    {\rm index}(C,J) .
\end{equation}
Theorem \ref{multFormula} now follows by
combining (\ref{MVee6}) and (\ref{indexissame})
with Corollary \ref{MVeeCor}.
\end{proof}

\begin{example}\label{nonconvexex}
Consider the matrix 
$\,(m_{ij} ) =
\left[\begin{matrix}
1& 0& 0& 0& 1& 1\cr
0& 1& 0& 1& 0&-1\cr
0& 0& 1&-1&-1& 0\cr 
\end{matrix}\right]
\,$
and let $f:\,\TT^3\dashrightarrow\TT^6$ be a rational map
defined by six Laurent polynomials, with generic coefficients, with Newton polytopes
given by the six parallelopipeds
$$P_i \,=\,[m_{1i},2]\times[m_{2i},2]\times[m_{3i},2].$$
Let $Y=\overline{f(\TT^3)}$.
A simple calculation using Theorem~\ref{multFormula}
(that can be performed using the software TrIm developed by Huggins and Yu~\cite{HY})
 shows that $\cT(Y)$ contains 
the cone $C$ which is spanned by $e_1$, $e_2$, and $e_3$,
and the only other cone intersecting
its interior is the cone $D$ spanned by rows of the matrix $(m_{ij})$.
Moreover, $C\cap D$ is the ray spanned by $e_1+e_2+e_3$.
It follows that the set of regular points of $\cT(Y)$ is not convex. In particular,
there does not exist a coarsest fan supported on $\cT(Y)$.
\end{example}

We now discuss the implicitization problem for maps
$f$ which are not necessarily generic. The $s$
 hypersurfaces $E_i:= \{f_i=0 \} \subset\TT^r$ are assumed to be reduced, irreducible, and different.
We focus on computing $\cT(Y)$ as a set, leaving 
the question of multiplicities for future work.
Let $X:=\TT^r\setminus\cup_{i=1}^s E_i$, so that $f$ induces
a morphism $X\to Y\subset\TT^s$. We can thus apply
Theorem~\ref{tropforncc} to compute $\cT(Y)$.
We introduce the following notation and terminology.
For any compactification $X\subset\oX$ and any irreducible boundary divisor $D$,
let $[D]$ be the vector $[\val_D(f_1),\ldots,\val_D(f_s)]\in\bQ^s$.
By Theorem~\ref{CorOfMain}, $[D]\in\cT(Y)$ for any $D$.
We define the simplicial complex $\Delta(\oX)$ that describes the combinatorics of
the boundary of $\oX$ as in Theorem~\ref{tropfornccmain}.
As in Remark~\ref{importantremark}, 
the tropical variety $\cT(Y)$ is contained in the union of cones
with rays given by $[D]$ and cones described by $\Delta(\oX)$.
We say that $\oX$ {\em computes} $\cT(X)$ if $\cT(Y)$ is equal to this union of cones.
For example, if $\oX$ is smooth with a normally crossing boundary then $\oX$ 
computes $\cT(X)$ by Corollary~\ref{tropforncc}.
But Proposition~\ref{skeleton} suggests that it may suffice to assume
that $\oX$ has ``combinatorial normal crossings'', i.e.~to assume that $k$ 
boundary divisors intersect in codimension~$k$.
Below we describe how this works
 for surfaces, the case $r=2$. In what follows we assume $r=2$.

We construct a compactification $\oX$ of $X$ in two steps.
First we compactify $X$ by a toric surface $\bP(\tilde\cN)$, where $\tilde\cN$ is some
strictly simplicial refinement of the normal fan $\cN$ of the Minkowski sum
of Newton polygons of $f_1,\ldots,f_r$. By Theorem~\ref{CorOfMain}, 
the rays of $\cT(Y)$ that we see on this stage are $[E_i]=e_i$ for $i=1,\ldots,s$
and $[D_j]=\bQ_{\ge0}\Psi(\rho_j)$, where $\rho_i\subset\tilde\cN$ for $j=1,\ldots,p$ 
are the rays that correspond to the toric divisors $D_j$.

Consider a morphism $\oX\to\bP(\tilde\cN)$ which is
a composition of blow-ups with smooth centers.
Let $\tilde E_1,\ldots,\tilde E_s$,  $\tilde D_1,\ldots,\tilde D_p$ be 
the proper transforms
and $F_1,\ldots,F_q$ the exceptional divisors.
We determine the simplicial complex $\Delta(\oX;E,D,F)$ as in Theorem~\ref{tropfornccmain}.
Let $u_{ij}$ (resp.~$v_{ij}$)
be the coefficient of $F_i$ in the pullback of $E_j$ (resp.~$D_j$).
Note that
on $\bP(\tilde\cN)$ the divisor of the rational function $f_i$ is given by
$(f_i)=E_i+\sum_j\Psi_i(\rho_j)D_j$. It follows that on $\oX$ we have
$$(f_i)=\tilde E_i+\sum_j\Psi_i(\rho_j)\tilde D_j+\sum_k
\bigl(u_{ki}+\sum_j\Psi_i(\rho_j)v_{kj}\bigr)F_k.$$
It follows that the vectors $[F_k]$ in $\QQ^s$ are given by the formula
\begin{equation}\label{pullback}
[F_k]= (u_{k1},\ldots,u_{ks}) +\Psi\left(v_{k1}\rho_1 + \cdots + v_{kp} \rho_p \right).
\end{equation}

\begin{proposition}
Let $r=2$ and let $\oX\to \bP(\cN')$ be a morphism, constructed 
as a composition of blow-ups of points,
such that no three divisors from the collection $\tilde E$, $\tilde D$, and $F$
have a common point. Then $\oX$ computes the tropical surface $\cT(Y)$.
\end{proposition}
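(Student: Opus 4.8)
The plan is to verify that the compactification $\oX$ satisfies the hypotheses needed to apply Theorem~\ref{tropforncc}, or more precisely the refined version suggested by Proposition~\ref{skeleton} and Remark~\ref{importantremark}. Since $r=2$, every boundary divisor is a curve on a surface, so ``no three divisors meet at a point'' is exactly the statement that $k$ boundary components meet in codimension $k$ for all $k$ (the only nontrivial case being $k=2$, where pairwise intersections are finite sets of points, and $k=3$, which is ruled out by hypothesis). Thus $\oX$ has ``combinatorial normal crossings'' in the sense discussed before the proposition, even though the boundary need not literally be a simple normal crossings divisor (two branches could be tangent, or a single component could be singular at a boundary point). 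First I would record that, by Remark~\ref{importantremark}, $\cT(Y)$ is contained in the union $\mathcal{U}$ of the cones $[\sigma]$ for $\sigma\in\Delta(\oX;E,D,F)$, with rays $[E_i]=e_i$, $[D_j]=\bQ_{\ge0}\Psi(\rho_j)$, and $[F_k]$ given by formula~\eqref{pullback} — this containment needs only normality of $\oX$, which holds since $\oX$ is smooth. So the entire content is the reverse containment $\cT(Y)\supseteq\mathcal{U}$.

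For the reverse containment I would argue exactly as in the proof of Proposition~\ref{skeleton}, transported through the map $X\to Y$. Pick a tropical fan $\cF$ for $Y$ (equivalently a tropical fan on $\cT(Y)$); by Theorem~\ref{CorOfMain} applied to each boundary divisor $D$ of $\oX$, every ray $[D]$ lies in $\cT(Y)$, hence in $|\cF|$ after refining $\cF$ so that each $[D]$ spans a cone; and by Proposition~\ref{surj} the support $\cT(Y)$ is the $\mbA$-image (here $\mbA$ is the linear map attached to $\alpha:\TT^{r+s}\to\TT^s$, or equivalently the map sending a valuation to $([\val(f_1)],\dots,[\val(f_s)])$) of $\cT(X)$, which by Theorem~\ref{tropfornccmain} applied to the smooth normal-crossings compactification $\oX$ of $X\subset\TT^{r+s}$ is $\bigcup_{\sigma\in\Delta(\oX)}[\sigma]$ in $N_\bQ^{r+s}$. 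Pushing forward, $\cT(Y)=\bigcup_{\sigma}\mbA([\sigma])$, and each $\mbA([\sigma])$ is spanned by the vectors $[D]$ for $D$ running over the boundary components indexed by $\sigma$; these are precisely the generators listed in~\eqref{pullback} and the lines above it. So $\mathcal{U}=\bigcup_\sigma\mbA([\sigma])=\cT(Y)$, provided one checks that the combinatorics $\Delta(\oX;E,D,F)$ computed on $\oX$ correctly records which collections of boundary divisors of $\oX$ have nonempty common intersection — and under the ``no three divisors meet'' hypothesis this simplicial complex is at most $1$-dimensional, so it is determined entirely by pairwise intersection data, which is visible on $\oX$.

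The one genuine subtlety — and the step I expect to be the main obstacle — is that the compactification $\oX$ of $X$ is \emph{not} literally snc: two boundary components can be tangent at a point, or a component $\tilde E_i$ can pass singularly through a boundary point, so Theorem~\ref{tropfornccmain} does not apply to $\oX$ verbatim. Here I would invoke the stronger form of Theorem~\ref{tropfornccmain} noted in Remark~\ref{importantremark}: for normal $\oX$ one always has $\cT(X)\subseteq\bigcup_\sigma[\sigma]$, and one upgrades this to equality using the dimension count $\dim\cT(X)=\dim X=2$ together with the combinatorial-normal-crossings hypothesis, exactly as in Proposition~\ref{skeleton}. Concretely: the union $\bigcup_\sigma[\sigma]$ is a fan of pure dimension $\le 2$ because no three divisors meet; on the locus where it has dimension $2$ one runs the balancing argument of Proposition~\ref{skeleton} to conclude that each $2$-dimensional cone of a tropical refinement that meets a cone $\mbA([\sigma])$ actually fills it up, giving equality; and $1$-dimensional pieces are handled by the fact that every ray $[D]$ individually lies in $\cT(Y)$ by Theorem~\ref{CorOfMain}. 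The remaining bookkeeping — that formula~\eqref{pullback} for $[F_k]$ agrees with $[\val_{F_k}(f_i)]$, which was already derived from the divisor computation $(f_i)=\tilde E_i+\sum_j\Psi_i(\rho_j)\tilde D_j+\sum_k(u_{ki}+\sum_j\Psi_i(\rho_j)v_{kj})F_k$ preceding the proposition — is routine and needs only to be quoted.
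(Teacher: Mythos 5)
You correctly isolate the main subtlety (that $\oX$ is not literally simple normal crossings, so Theorem~\ref{tropfornccmain} cannot be applied to $\oX$ verbatim), but the balancing argument you propose to bridge the gap is incomplete, and it is also not the route the paper takes.

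The issue with your Proposition~\ref{skeleton}-style argument is that balancing alone only rules out a boundary for $\cT(Y)$ in the relative interior of a $2$-dimensional cone $\mbA([\sigma])$ \emph{once you already know that this interior meets $\cT(Y)$}. In Proposition~\ref{skeleton} that nonemptiness is supplied by the toric criterion \cite[Lemma 2.2]{Te} (the tropical variety meets the relative interior of a cone if and only if the corresponding stratum of $\bP(\cG)$ meets the closure of the variety); you never supply the analogous input here. Knowing that the two extremal rays $[\Delta_1],[\Delta_2]$ lie in $\cT(Y)$ (via Theorem~\ref{CorOfMain}) does not by itself force the open cone between them to lie in $\cT(Y)$: balancing at the ray $[\Delta_1]$ constrains how $2$-dimensional pieces of $\cT(Y)$ fit together around $[\Delta_1]$, but does not tell you which cones of $\mathcal{U}$ they live in. So your argument proves only the containment $\cT(Y)\subseteq\mathcal{U}$, which is the easy direction already given by Remark~\ref{importantremark}. (It could be patched — for instance by noting that the exceptional divisor $F_p$ of blowing up $p=\Delta_1\cap\Delta_2$ has $[F_p]=m_1[\Delta_1]+m_2[\Delta_2]$ in the relative interior of the cone and is again divisorial, hence in $\cT(Y)$ by Theorem~\ref{CorOfMain} — but at that point you have essentially reproduced the paper's observation, with the balancing step now being a needless detour.)

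The paper's actual proof is shorter and more direct: it takes a further composition of point blow-ups $\oX'\to\oX$ making the boundary genuinely snc, so that Corollary~\ref{tropforncc} applies to $\oX'$, and then checks via \eqref{pullback} that each newly introduced exceptional divisor contributes a ray already contained in the union of cones coming from $\oX$. Concretely: blowing up a point $p$ on a single boundary component $\Delta$ gives $[F_p]$ on the ray of $[\Delta]$, and blowing up a point $p=\Delta_1\cap\Delta_2$ gives $[F_p]=m_1[\Delta_1]+m_2[\Delta_2]$ in the interior of the two-dimensional cone $\langle[\Delta_1],[\Delta_2]\rangle$, so the subdivided cones still have the same union. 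Hence the union of cones computed on $\oX'$ equals the one computed on $\oX$, and the latter therefore computes $\cT(Y)$. This avoids any appeal to balancing and makes the role of the ``no three boundary divisors meet'' hypothesis transparent.
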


\begin{proof}
It suffices to observe, using \eqref{pullback}, that new exceptional divisors added by a resolution
of singularities $\oX'\to\oX$ do not  change the tropicalization:
each time we blow-up a point $p$ that belongs to a unique boundary divisor $\Delta$,
we obviously have $[F_p]=[\Delta]$ and the only new simplex created is a pair
$\{F_p,\Delta\}$, which gives rise to the same ray $[\Delta]$ on the tropicalization.
Similarly, the blow-up of a point $p$ that belongs to two boundary divisors $\Delta_1$ and 
$\Delta_2$ creates a new ray $[F_p]=m_1[\Delta_1]+m_2[\Delta_2]$,
where $m_i$ is the multiplicity of $\Delta_i$ at $p$, and the tropicalization
stays the same. 
\end{proof}

\bigskip

\noindent {\bf Authors' addresses:}

\medskip

\noindent  Bernd Sturmfels,
  Department of Mathematics, University of California,
   Berkeley, CA 94720-3840, USA,
{\tt bernd@math.berkeley.edu}

\medskip

\noindent  Jenia Tevelev,
  Department of Mathematics, University of Massachusetts,
Amherst, MA 01003-9305, USA,
{\tt tevelev@math.umass.edu}


\begin{thebibliography}{10}

\bibitem{Be} D.~Bernstein, {\em The number of roots of a
system of equations}, Functional Analysis and its Applications
{\bf 9} (1975) 183--185.

\bibitem{BG} R.~Bieri and J.~R.~J.~Groves, {\em The geometry of the set of
characters induced by valuations}, J. Reine Angew. Math. {\bf 347}
(1984), 168--195

\bibitem{BS} L.~Billera and B.~Sturmfels, {\em
Fiber polytopes}, Annals of Math. {\bf 135} (1992) 527--549.

\bibitem{BJSST}  T. Bogart, A. Jensen, D. Speyer, B.~Sturmfels
and R. Thomas, {\em Computing tropical varieties},
Journal of Symbolic Computation {\bf 42} (2007) 54--73.

\bibitem{Bo} N.~Bourbaki: {\em Commutative Algebra},
Springer, Berlin, 1989.

\bibitem{DFS} A.~Dickenstein, E.-M.~Feichtner and B.~Sturmfels,
{\em Tropical discriminants}, to appear in: Journal
of the American Mathematical Society, {\tt math.AG/0510126}.

\bibitem{EKL} M.~Einsiedler, M.~Kapranov, D.~Lind,
{\em Non-archimedean amoebas and tropical varieties}, 
to appear in:
Journal fur die reine und angewandte Mathematik,
{\tt math.AG/0408311}.

\bibitem{Fu} W.~Fulton, {\em Intersection theory},
Springer Verlag, New York, 1984.

\bibitem{FS} W.~Fulton and B.~Sturmfels:
Intersection theory on toric varieties, {\em Topology} {\bf 36} (1997)
335--353.

\bibitem{HKT} P.~Hacking, S.~Keel, and J.~Tevelev,
{\em Stable pair, tropical, and log canonical compact moduli of del Pezzo surfaces}, 
{\tt math.AG/0702505}.

\bibitem{HY} P.~Huggins, B.~Sturmfels and J.~Yu, \emph{TrIm: Software for tropical implicitization}, in preparation.

\bibitem{KP} E.~Katz and S.~Payne: 
{\em Piecewise polynomials, Minkowski weights, and
localization on toric varieties}, {\tt math.AG/0703672}.

\bibitem{KE} A.~ Khovanskii and A.~Esterov:
{\em Elimination theory and Newton polyhedra},
{\tt math.AG/0611107}.

\bibitem{Ko} J.~Koll\'ar:
{\em Rational Curves on Algebraic Varieties},
Springer Verlag, 1996.

\bibitem{McM} P.~McMullen, {\em
Mixed fibre polytopes}, Discrete Comput.~Geom.
{\bf 32} (2004) 521--532.

\bibitem{Sam} P.~Samuel, {\em \`A propos du th\'eor\`eme des unit\'es},
Bull. Sci. Math. (2) {\bf90}, 1966, 89--96.

\bibitem{DSPhD} D.~Speyer, {\em Tropical Geometry},
Ph.D.~Dissertation, UC Berkeley, 2005.

\bibitem{SS} D.~Speyer and B.~Sturmfels,
{\em The tropical Grassmannian}, Adv.~in Geometry {\bf 4} (2004)
389--411.

\bibitem{NPR} B.~Sturmfels, {\em On the Newton polytope of 
the resultant}, J.~Alg~ Combinatorics {\bf 3} (1994) 207--236.

\bibitem{STY} B.~Sturmfels, J.~Tevelev, and J.~Yu,
{\em The Newton polytope of the implicit equation},
to appear in:  Moscow Mathematical Journal,
{\tt math.CO/0607368}.

\bibitem{Te} J.~Tevelev, {\em Compactifications of suvbarieties of tori}, to appear in:
American Journal of Mathematics, {\tt math.AG/0412329}.
\end{thebibliography}
\end{document}